\newcommand{\lmfdbec}[3]{\href{http://www.lmfdb.org/EllipticCurve/Q/#1#2#3}{{\text{\rm#1#2#3}}}}
\newcommand{\lmfdbeciso}[2]{\href{http://www.lmfdb.org/EllipticCurve/Q/#1/#2}{\text{\rm#1#2}}}
\newcommand{\Sage}{{\sc SageMath}}
\newcommand{\Magma}{{\sc Magma}}
\newcommand{\condS}{condition~{\bf (S)}}
\newcommand{\F}{\mathbb{F}}
\newcommand{\Fp}{\mathbb{F}_p}
\newcommand{\Fpstar}{\mathbb{F}_p^*}
\newcommand{\Fbar}{{\overline{\F}}}
\newcommand{\PP}{\mathbb{P}}
\newcommand{\Q}{\mathbb{Q}}
\newcommand{\Z}{\mathbb{Z}}
\newcommand{\Kbar}{{\overline{K}}}
\newcommand{\rhobar}{{\overline{\rho}}}
\newcommand{\frp}{{\mathfrak p}}
\newcommand{\eps}{\varepsilon}
\newcommand{\calL}{\mathcal{L}}
\newcommand{\calM}{\mathcal{M}}
\newcommand{\calO}{\mathcal{O}}
\DeclareMathOperator{\Aut}{Aut}
\DeclareMathOperator{\End}{End}
\DeclareMathOperator{\Gal}{Gal}
\DeclareMathOperator{\sss}{ss}
\DeclareMathOperator{\Tr}{Tr}
\newcommand{\vv}{\upsilon}
\newcommand{\GL}{\operatorname{GL}}
\newcommand{\PGL}{\operatorname{PGL}}
\newcommand{\PSL}{\operatorname{PSL}}
\newcommand{\SL}{\operatorname{SL}}
\DeclareMathOperator{\id}{id}
\def\legendre#1#2{\left(\displaystyle\frac{#1}{#2}\right)}
\newcommand{\diag}{{\operatorname{diag}}}
\newcommand{\mat}[4]{{\left(\begin{smallmatrix} #1 & #2 \\ #3 & #4 \end{smallmatrix} \right)}}
\numberwithin{equation}{section}
\newtheorem{theorem}{Theorem}[section]
\newtheorem{lemma}[theorem]{Lemma}
\newtheorem{corollary}[theorem]{Corollary}
\newtheorem{proposition}[theorem]{Proposition}
\newtheorem{example}[theorem]{Example}
\theoremstyle{remark}
\newtheorem{remark}[theorem]{Remark}
\begin{document}

\title{Global methods for the symplectic type of congruences between elliptic curves} 

\author{John Cremona}
\address{Mathematics Institute,
         University of Warwick,
         Coventry CV4 7AL,
         United Kingdom}
\email{j.e.cremona@warwick.ac.uk}

\author{Nuno Freitas}
\address{Departament de Matem\`atiques i Inform\`atica,
Universitat de Barcelona (UB),
Gran Via de les Corts Catalanes 585,
08007 Barcelona, Spain}
\email{nunobfreitas@gmail.com}

\date{\today}

\keywords{Elliptic curves, Weil pairing, Galois representations, symplectic isomorphisms}

\thanks{JEC was supported by EPSRC Programme Grant EP/K034383/1
  \textit{LMF: L-Functions and Modular Forms}, and the Horizon 2020
  European Research Infrastructures project \textit{OpenDreamKit}
  (\#676541)}
\thanks{NF was supported by the European Union's
  Horizon 2020 research and innovation programme under the Marie
  Sk\l{l}odowska-Curie grant agreement No.\ 747808}

\begin{abstract}
We describe a systematic investigation into the existence of
congruences between the mod~$p$ torsion modules of elliptic curves
defined over $\Q$, including methods to determine the symplectic type
of such congruences. We classify the existence and symplectic type of mod~$p$ congruences between twisted elliptic curves over number fields, giving global symplectic criteria that apply in situations where the available local methods may fail.

We report on the results of applying our methods for
all primes~$p\ge7$
to the elliptic curves in the LMFDB database, which currently includes
all elliptic curves of conductor less than~$\numprint{500000}$.  We also show that while such congruences exist
for each $p\le17$, there are none for~$p \geq 19$ in the database, in line
with a strong form of the Frey-Mazur conjecture.
\end{abstract}

\maketitle

\section{Introduction}

Let $p$ be a prime, $K$ be a number field and $G_K = \Gal(\Kbar/K)$
the absolute Galois group of $K$.  Let $E$ and $E'$ be elliptic curves
defined over $K$, and write $E[p]$ and $E'[p]$ for their $p$-torsion
$G_K$-modules.

Let $\phi : E[p] \to E'[p]$ be an isomorphism of $G_K$-modules.  There
is an element $d(\phi) \in \Fpstar$ such that the Weil
pairings~$e_{E,p}$ and~$e_{E',p}$ satisfy
\[
e_{E',p}(\phi(P), \phi(Q)) = e_{E,p}(P, Q)^{d(\phi)}
\]
for all $P, Q \in E[p]$.  We say that $\phi$ is a {\em symplectic
  isomorphism} or an {\em anti-symplectic isomorphism} if $d(\phi)$ is
a square or a non-square modulo~$p$, respectively.  When two elliptic
curves have isomorphic $p$-torsion modules we say that there is a
mod~$p$ {\em congruence} between them, or that they are {\em
  congruent} mod~$p$ or $p$-{\em congruent}.

For example, suppose that~$\phi$ is induced by an isogeny (also
denoted~$\phi$) from $E$ to $E'$, of degree $\deg(\phi)$ coprime
to~$p$. Then, using standard properties of the Weil pairing,
we have
\[
  e_{E',p}(\phi(P), \phi(Q)) = e_{E,p}(P, \hat\phi\phi(Q)) =
  e_{E,p}(P, \deg(\phi)(Q)) = e_{E,p}(P, Q)^{\deg(\phi)},
  \]
  where $\hat{\phi}$ denotes the dual isogeny; 
  thus $d(\phi)=\deg\phi\pmod{p}$.
  Hence $\phi$ is symplectic or antisymplectic according as $\deg\phi$
is a quadratic residue or nonresidue mod~$p$, respectively.  We will refer to
this condition as the \emph{isogeny criterion}.

Given $G_K$-isomorphic modules $E[p]$ and~$E'[p]$ as above, it is
possible they admit isomorphisms with both symplectic types.  This
occurs if and only if $E[p]$ admits an anti-symplectic automorphism.
The following proposition, which follows from results
in~\cite{FKSym}, gives several equivalent conditions for this property.

\begin{proposition} \label{P:conditionS}
Let $E$ be an elliptic curve over a number field~$K$. Let~$p$ be an
odd prime, and $\rhobar_{E,p} : G_K \to \GL_2(\Fp)$ the representation
arising from the action of~$G_K$ on $E[p]$.
Let~$G=\rhobar_{E,p}(G_K)\subset \GL_2(\Fp)$ be the image
of~$\rhobar_{E,p}$.  Then the following are equivalent:
\begin{enumerate}
\item $E[p]$ does not admit anti-symplectic automorphisms;
\item $G$ is not contained in a (split or nonsplit) Cartan subgroup;
\item the centralizer of~$G$ in~$\GL_2(\Fp)$ contains only matrices
  with square determinant;
\item either (A) $G$ is non-abelian,

  or\ \ (B) $\rhobar_{E,p} \cong \left( \begin{smallmatrix} \chi & * \\ 0 & \chi \end{smallmatrix} \right)$ where $\chi : G_K \to \Fpstar$ is a character
 and $* \neq 0$.
\end{enumerate}
In particular, these conditions are satisfied when $\rhobar_{E,p}$ is
absolutely irreducible, since then condition~(A) holds.
\end{proposition}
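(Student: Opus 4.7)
The plan is to start from the observation that a $G_K$-module automorphism $\phi$ of $E[p]$ is nothing but a matrix $A\in\GL_2(\Fp)$ lying in the centralizer $Z:=Z_{\GL_2(\Fp)}(G)$. A direct Weil-pairing computation, analogous to the isogeny calculation given in the text but applied to the $\Fp$-linear map~$A$, yields $d(\phi)=\det(A)$. Condition~(1) then says precisely that no $A\in Z$ has non-square determinant, which is condition~(3); so $(1)\Leftrightarrow(3)$ is immediate.

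For $\lnot(2)\Rightarrow\lnot(3)$, note that a Cartan subgroup $C$ of $\GL_2(\Fp)$ is abelian, so if $G\subset C$ then $Z\supset C$; the determinant map $C\to\Fpstar$ is surjective (trivially for a split Cartan, via the norm map for a non-split one), so $Z$ contains non-square-determinant elements.

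The core of the argument is the equivalence $(3)\Leftrightarrow(4)$, done by a case analysis on non-scalar elements of $\GL_2(\Fp)$. The key fact I would invoke is that the centralizer of any non-scalar element is abelian: either a Cartan subgroup (in the semisimple case) or, in a suitable basis, the group $H=\{\mat{a}{b}{0}{a}:a\in\Fpstar,\,b\in\Fp\}$ when the element is a Jordan block $\lambda I+N$ with $N^2=0\ne N$. For $(4)\Rightarrow(3)$: in case~(A), if $Z$ contained a non-scalar $z$, then $G\subset Z_{\GL_2(\Fp)}(z)$ would be abelian, contradicting~(A); hence $Z\subset\Fpstar\cdot I$, and scalar determinants $\lambda^2$ are squares. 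In case~(B), after conjugation $G\subset H$ and $G$ contains an element with $b\ne 0$ whose centralizer is exactly~$H$, so $Z\subset H$ and every determinant $a^2$ is a square. For $\lnot(4)\Rightarrow\lnot(3)$: if $G$ is abelian and contains no non-scalar Jordan-block element, then $G$ is a commuting family of semisimple matrices, simultaneously diagonalizable over $\F_{p^2}$, placing $G$ inside a split or non-split Cartan; then $\lnot(2)$, and hence by the previous paragraph $\lnot(3)$, holds.

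The step I expect to cost the most effort is verifying the case-(B) centralizer claim and checking, for the converse, that an abelian $G$ cannot mix a Cartan-type element with a Jordan-type element: a Jordan block's centralizer $\{aI+bN\}$ has characteristic polynomial $(T-a)^2$, so any element of it is either scalar or again a Jordan block, ruling out a non-scalar semisimple companion. For the final assertion, any abelian subgroup of $\GL_2(\Fp)$ preserves a line over $\Fbar$---either by simultaneous diagonalization of commuting semisimple matrices, or via the unique eigendirection of a Jordan-type element---so the representation would be reducible over~$\Fbar$; hence absolute irreducibility of $\rhobar_{E,p}$ forces $G$ non-abelian, i.e.\ condition~(4)(A).
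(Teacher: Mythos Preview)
Your argument is correct. The equivalence $(1)\Leftrightarrow(3)$ via $d(\phi)=\det A$ for $A$ in the centralizer, the implication $\lnot(2)\Rightarrow\lnot(3)$ via surjectivity of the determinant on a Cartan, the implication $(4)\Rightarrow(3)$ by reducing the centralizer to scalars (case~A) or to the group $H$ of scalar-plus-nilpotent matrices (case~B), and the closing step $\lnot(4)\Rightarrow\lnot(2)$ by simultaneous diagonalization of a commuting semisimple family all go through as you describe. The observation that the centralizer $H$ of a Jordan block contains no non-scalar semisimple element cleanly rules out the ``mixed'' abelian case.

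The paper itself does not give an argument here: its proof consists entirely of references to \cite[Lemma~6, Lemmas~7--8, Corollary~3]{FKSym}. Your write-up therefore supplies a self-contained proof where the paper defers to an external source. The organizing principle you use---that the centralizer of a non-scalar element of $\GL_2(\Fp)$ is either a Cartan subgroup or the group $H$, and is in particular abelian---is exactly the structural fact underlying the cited lemmas, so your approach and the one in \cite{FKSym} are essentially the same, just packaged differently.
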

\begin{proof} See~\cite[Lemma~6]{FKSym}, \cite[Lemmas~7~and~8]{FKSym},
  and the proof of~\cite[Corollary~3]{FKSym}.
 \end{proof}

We say that \emph{~$E[p]$ satisfies \condS} if and
only if one of the equivalent conditions of
Proposition~\ref{P:conditionS} is satisfied.  In the case $K=\Q$,
\condS\ is satisfied by $E[p]$ for all~$p\ge7$, by~\cite[Corollary~3
  and Proposition~2]{FKSym}.  Hence, when a $G_\Q$-isomorphism $\phi :
E[p] \simeq E'[p]$ exists, normally there is only one possible
symplectic type for any such~$\phi$.  For an example where both types
exist, take $p=5$ and $E$, $E'$ to be the curves\footnote{Throughout
  the paper we use Cremona labels for elliptic curves over~$\Q$; these
  curves may be found in the LMFDB (see \cite{lmfdb}).}
$\lmfdbec{11}{a}{1}$ and $\lmfdbec{1342}{c}{2}$, respectively
(see~\cite[Example~5.2]{FKSym} for more details).

It is then natural to consider triples $(E,E',p)$ where $E/K$ and
$E'/K$ are elliptic curves with isomorphic $p$-torsion such that the
$G_K$-modules isomorphisms $\phi : E[p] \rightarrow E'[p]$ are either
all symplectic or all anti-symplectic.  In this case, we will say that
the {\em symplectic type} of $(E,E',p)$ is respectively symplectic or
anti-symplectic.  The problem of determining the symplectic type of
$(E,E',p)$ over~$K=\Q$ was extensively studied by the second author
and Alain Kraus in~\cite{FKSym}.

The isogeny criterion gives an easy solution when $(E,E',p)$
arises from an isogeny $h \colon E \to E'$ of degree~$n$ coprime
to~$p$, since in such cases $d(h|_{E[p]}) = n$ and the symplectic type
of $(E,E',p)$ is symplectic if $n$ is a square mod~$p$ and
anti-symplectic otherwise.

Given a generic triple $(E, E', p)$, in principle, one
could compute the $p$-torsion fields of $E$ and $E'$,
write down the Galois action on $E[p]$ and $E'[p]$ and check if they
are symplectically or anti-symplectically isomorphic. However, the
degree of the $p$-torsion fields grows very fast with~$p$, making this
method not practical already over~$\Q$ for $p = 5$.

One way to circumvent this computational problem, at least over~$\Q$, is to use the methods
presented in~\cite{FKSym}. Indeed, the main objective of {\it loc. cit.} was to establish a complete list of {\em local symplectic criteria},
allowing one to determine the symplectic type of $(E,E',p)$ using only
standard information about the local curves $E/\Q_\ell$ and
$E'/\Q_\ell$ at a single prime $\ell \neq p$ and congruence conditions
on~$p$. Further, it is also proved in~\cite{FKSym}  that if the
symplectic type of~$(E,E',p)$ is encoded in local information at a
single prime $\ell \neq p$, then one of the local criteria will
successfully determine it.  
There are cases where the local methods
are insufficient: however, this can occur only when 
the representation~$\rhobar_{E,p} : G_\Q \to \GL_2(\Fp)$ attached to~$E$ 
has image without elements of order~$p$;
see \cite[Proposition~16]{FKSym} for an example.

This paper has the following main objectives. The first two concern
theoretical results and methods which apply to elliptic curves defined
over arbitrary number fields, while the last applies these methods to
the LMFDB database of elliptic curves over~$\Q$ (see \cite{lmfdb}):
\begin{itemize}
 \item[(i)] We give global methods to determine the symplectic type
   of~$(E,E',p)$ when the local methods of \cite{FKSym} may not apply.
 \item[(ii)] We study in detail the case of congruences between twists.
\item[(iii)] We systematically identify and determine the symplectic
  type of all congruences between the elliptic curves defined
  over~$\Q$ in the LMFDB for all~$p\ge7$.
\end{itemize}

Towards (i), we give in Section~\ref{S:statistics} a complete
resolution for the case~$p=7$ over~$\Q$ using modular curves; this is the most relevant case as explained in~\S\ref{S:motivation}.
For~(ii), in
Section~\ref{S:cong-twist} we give, for general~$p$, global criteria
for the existence of congruences, and to decide their symplectic type, when $E$
and~$E'$ are twists of each other; Theorem~\ref{T:twist} shows that
(under condition~{\bf(S)}) congruences between quadratic twists occur if
and only if the projective image is dihedral, and
Theorem~\ref{T:quadratic} establishes the symplectic type of such
congruences. Theorems~\ref{T:higherTwists} and~\ref{T:typeHigher}
study the existence and the symplectic type of congruences between
higher order twists.  As a special case of the latter results, we prove
the following.

\begin{theorem}
\label{T:higherTwistsQ}
Over~$\Q$ we have the following 
$p$-congruences between higher twists:
\begin{itemize}
\item Quartic twists between curves of the form $E_a:\ Y^2=X^3+aX$,
  with $a \in \Q^*$, which have $j$-invariant~$1728$:
  \begin{itemize}
    \item
   $E_a$ is symplectically $3$-congruent to $E_{-1/3a}$, and also
      anti-symplectically $3$-congruent to both $E_{-4a}$ and
      $E_{4/3a}$, the latter two curves being $2$-isogenous to the
      former;
    \item
      $E_a$ is symplectically $5$-congruent to $E_{5/a}$, and also
      anti-symplectically $3$-congruent to both $E_{-4a}$ and
      $E_{-20/a}$, the latter two curves being $2$-isogenous to the
      former.
  \end{itemize}
\item Sextic twists between curves of the form $E_b:\ Y^2=X^3+b$, with
  $b \in \Q^*$, which have $j$-invariant~$0$:
  \begin{itemize}
    \item
      $E_b$ is symplectically $5$-congruent to $E_{4/5b}$, and also
      anti-symplectically $5$-congruent to both $E_{-27b}$
      and~$E_{-108/5b}$, the latter two curves being $3$-isogenous to
      the former.
    \item
      $E_b$ is symplectically $7$-congruent to $E_{-28/b}$, and also
      anti-symplectically $7$-congruent to both $E_{-27b}$
      and~$E_{756/b}$, the latter two curves being $3$-isogenous to
      the former.
  \end{itemize}
\end{itemize}
\end{theorem}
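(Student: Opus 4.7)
The plan is to derive Theorem~\ref{T:higherTwistsQ} as a direct specialisation of the general existence and symplectic-type results, Theorems~\ref{T:higherTwists} and~\ref{T:typeHigher}, to $K=\Q$. For $j$-invariant~$1728$ the $\Q$-isomorphism class of $E_a\colon Y^2=X^3+aX$ is determined by $a$ modulo fourth powers, and the twist linking $E_a$ to $E_{a'}$ corresponds to the class $[a'/a]\in \Q^*/(\Q^*)^4 \cong H^1(G_\Q,\mu_4)$ coming from the automorphism group~$\mu_4$ of $E_a$. Analogously, for $j=0$, the twist linking $E_b\colon Y^2=X^3+b$ to $E_{b'}$ is governed by $[b'/b]\in\Q^*/(\Q^*)^6 \cong H^1(G_\Q,\mu_6)$.

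The first step is to verify the ``primary'' symplectic congruences, namely $E_a\sim_3 E_{-1/3a}$ and $E_a\sim_5 E_{5/a}$ in the $j=1728$ case, and $E_b\sim_5 E_{4/5b}$ and $E_b\sim_7 E_{-28/b}$ in the $j=0$ case. Theorem~\ref{T:higherTwists} provides an explicit condition on the twist character (involving its order and the way $p$ splits in the CM ring, $\Z[i]$ or $\Z[\zeta_3]$ respectively) equivalent to the congruence of mod~$p$ Galois representations. Once each congruence is established, Theorem~\ref{T:typeHigher} identifies the primary congruence as symplectic via a Weil pairing computation on the CM $p$-torsion that depends only on the reduction of the twist character modulo the primes above~$p$.

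The remaining anti-symplectic congruences then follow by composition with a rational isogeny and the isogeny criterion from the introduction. For $j=1728$, the rational $2$-torsion point $(0,0)\in E_c$ induces a 2-isogeny $E_c\to E_{-4c}$ for every~$c$; since $2$ is a quadratic non-residue modulo both $3$ and $5$, this isogeny is anti-symplectic on $p$-torsion for $p\in\{3,5\}$. Applied with $c=a$ this directly gives $E_a\sim_p E_{-4a}$ anti-symplectically, while applied with $c=a'$ and composed with the primary symplectic congruence it yields $E_a\sim_p E_{-4a'}$ anti-symplectically, accounting for the two anti-symplectic pairs in each $j=1728$ bullet. For $j=0$, the analogous rational 3-isogeny $E_c\to E_{-27c}$, together with the fact that $3$ is a non-residue modulo both $5$ and $7$, produces the two anti-symplectic pairs in each $j=0$ bullet by the same mechanism. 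The main obstacle will be the existence verification in the previous paragraph: one must show that the specific rational parameters $-1/3a$, $5/a$, $4/5b$, $-28/b$ correspond to twist characters of the correct order modulo the primes above~$p$ in the relevant CM ring, which is exactly what pins down the numerical constants in the statement.
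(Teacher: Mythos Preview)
Your overall plan matches the paper's: invoke Theorem~\ref{T:higherTwists} for existence, Theorem~\ref{T:typeHigher} for the symplectic type, and then propagate via the $2$- and $3$-isogenies using the isogeny criterion. The isogeny-composition paragraph is correct and complete.

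However, your descriptions of what Theorems~\ref{T:higherTwists} and~\ref{T:typeHigher} actually provide are inaccurate, and this hides the real work. Theorem~\ref{T:higherTwists} does \emph{not} give ``an explicit condition on the twist character involving\ldots the way $p$ splits in the CM ring''; its hypothesis is that $\PP\rhobar_{E,p}(G_K)\cong D_n$, and its conclusion is that \emph{some} order-$n$ twist is $p$-congruent, with the twist parameter~$u$ determined (up to inversion and a factor of $-4$ or $-27$) by the projective $p$-division field~$F$ via Lemma~\ref{L:Dn}. Likewise, Theorem~\ref{T:typeHigher} does not proceed by ``a Weil pairing computation depending on the reduction of the twist character modulo the primes above~$p$''; its criterion for $n=4,6$ is simply whether $\sqrt{up^*}\in K$.

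Consequently, the step you flag as ``the main obstacle'' is not addressed by the mechanism you describe. What is actually needed, and what the paper does, is: (i) verify the $D_n$ hypothesis for each~$p$ using Zywina's explicit image results (\cite[Proposition~1.14]{Zywina}); (ii) compute the projective $p$-division field~$F$ and identify the subgroup of $\Q^*/(\Q^*)^n$ that becomes trivial in $F^*/(F^*)^n$, thereby pinning down~$u$ up to the ambiguities above; (iii) resolve the residual $u$ versus $u^{-1}$ ambiguity by an explicit calculation (since only one of the two gives a congruence, cf.\ Lemma~\ref{L:EpsEta}); and (iv) check the condition $\sqrt{up^*}\in\Q$ to read off the symplectic type. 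For instance, for $j=1728$, $p=3$ one finds $u=-a^2/3$, so $up^*=(-a^2/3)(-3)=a^2$, giving the symplectic conclusion. Your proposal, as written, neither verifies the $D_n$ hypothesis nor explains how the constants $-1/3a$, $5/a$, $4/5b$, $-28/b$ emerge from~$F$, so the argument is incomplete at exactly the point where the content lies.
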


For objective~(iii), we have implemented in {\Magma} \cite{magma} and
{\Sage} \cite{sage} the methods from objective~(i) together with those
from~\cite{FKSym}. We have used our code to classify the symplectic
types of all $p$-congruences for~$p\ge7$ between curves in the LMFDB,
namely all elliptic curves defined over~$\Q$ of conductor less
than~$\numprint{500000}$.  In Section~\ref{S:statistics} we give
details of these computations, including details of all the
congruences found in the database for $p$ in the range $7 \leq p \leq
17$.  We also include a discussion on how to determine whether $E[p]$
and~$E'[p]$ are isomorphic (ignoring the symplectic structure) in both
the irreducible and reducible cases. Finally, in
Section~\ref{S:Frey-Mazur}, we prove the following:

\begin{theorem}
 \label{T:cong19}
 Let $p > 17$ be a prime. Let $E/\Q$ and~$E'/\Q$ be elliptic curves
 with conductors at most~$\numprint{500000}$.  Suppose that $E[p]
 \simeq E'[p]$ as $G_\Q$-modules. Then $E$ and $E'$ are
 $\Q$-isogenous.
 \end{theorem}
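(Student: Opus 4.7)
The plan is to rule out, for every pair of non-$\Q$-isogenous elliptic curves $E,E'$ in the LMFDB and every prime $p>17$, the existence of a $G_\Q$-isomorphism $E[p]\simeq E'[p]$, by comparing Frobenius traces. If such an isomorphism existed then $\rhobar_{E,p}\simeq\rhobar_{E',p}$ as semisimple representations, so for every prime $\ell\nmid pN_EN_{E'}$ one has $a_\ell(E)\equiv a_\ell(E')\pmod{p}$. Consequently any prime $p>17$ witnessing such a congruence must divide
\[
G_S(E,E')\;:=\;\gcd_{\ell\in S}\bigl(a_\ell(E)-a_\ell(E')\bigr)
\]
for any finite set $S$ of primes of good reduction for both curves. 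By the Hasse bound, each $\ell\in S$ with $a_\ell(E)\ne a_\ell(E')$ contributes a sharp estimate $|a_\ell(E)-a_\ell(E')|\le 4\sqrt{\ell}$ on the surviving candidates for $p$, so typically a very short list of~$\ell$ reduces $G_S(E,E')$ to an integer with no prime divisor exceeding~$17$, eliminating the pair.

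To make the sweep feasible I would work only with isogeny-class representatives and pre-filter pairs by the support of their conductors. Level-raising and level-lowering results imply that if $\rhobar_{E,p}\simeq\rhobar_{E',p}$ with $p>17$, then the sets of bad primes of $E$ and $E'$ must essentially coincide away from very small primes; indeed, at a prime $\ell\ne p$ of multiplicative reduction for exactly one of the curves, the trace relation forces $a_\ell$ of the other curve to satisfy $a_\ell\equiv\pm(\ell+1)\pmod{p}$, eliminating most candidate~$p$ at once. After this filter, for each remaining ordered pair $(E,E')$ I would enlarge $S$ prime by prime, computing $G_S(E,E')$ cumulatively, until every prime divisor of $G_S(E,E')$ is at most $17$; at that point the pair is discarded from the list of potential counterexamples.

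The final step handles any stubborn pair whose accumulated GCD retains a spurious divisor $p>17$ even for large~$S$. For these I would fall back on the techniques developed earlier in the paper: applying the local symplectic criteria of~\cite{FKSym} at an auxiliary prime $\ell$ to force a contradiction, or, when those are inconclusive, using the modular-curve methods of Section~\ref{S:statistics} to test directly whether $j(E')$ arises from a rational point on the modular curve parametrising elliptic curves whose mod~$p$ representation is isomorphic to $\rhobar_{E,p}$. The main obstacle is engineering rather than mathematics: the database contains on the order of $10^5$ isogeny classes, so the set of ordered pairs to consider is roughly of size $10^{10}$, and the implementation must prune aggressively via the conductor-support filter and compute only as many $a_\ell$ as are strictly necessary to eliminate each surviving pair. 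In line with the strong form of the Frey--Mazur conjecture, no stubborn pair is expected to occur, and the sweep is expected to terminate without the need for the modular-curve fallback.
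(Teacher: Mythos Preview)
Your strategy is mathematically sound in outline but is not feasible as stated, and it misses the key structural insight that makes the paper's proof work. First, a factual slip: the database has about $2.16\times10^6$ isogeny classes, not $10^5$, so the number of unordered pairs is of order $10^{12}$; no amount of ``aggressive pruning via conductor support'' (which you leave vague) is likely to rescue a genuinely pairwise sweep. Second, your fallback is illusory: explicit models for $X_E(p)$ are only available for $p\le11$, so you cannot ``test directly'' on the modular curve for a stubborn prime $p>17$.

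The paper avoids the pairwise blow-up entirely by first bounding the candidate primes~$p$ \emph{a priori}. Using Kraus's results on the Serre level and weight of $\rhobar_{E,p}$, one shows that if $E[p]\simeq E'[p]$ with $p\ge19$ and $N_E\ne N_{E'}$, then (after possibly swapping $E,E'$) there is a prime $q$ of multiplicative reduction for $E$ with $p\mid v_q(\Delta_E)$. This is a condition on $E$ alone, so a single linear pass through the database computes, for each curve, the set $\mathcal{M}_E$ of such pairs $(q,p)$; the largest $p$ occurring in any $\mathcal{M}_E$ turns out to be~$97$. Hence every congruence between curves of different conductor has $p\le97$, and these primes are already handled by the hash-based sieve of Section~\ref{S:statistics}. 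Only pairs with the \emph{same} conductor need the gcd-of-trace-differences check you propose, and there are few enough of these to make it trivially feasible. Your level-lowering intuition is pointing at the right phenomenon, but you need to turn it into the precise statement ``$p\mid v_q(\Delta_E)$ at some multiplicative prime~$q$'' to get a finite, computable bound on~$p$ rather than a heuristic filter on pairs.
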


The Frey-Mazur conjecture states there is a constant $C \geq 17$ such
that, if $E/\Q$ and $E'/\Q$ satisfy $E[p] \simeq E'[p]$ as
$G_\Q$-modules for some prime $p > C$, then $E$ and $E'$ are
$\Q$-isogenous. Theorem~\ref{T:cong19} shows that for curves of
conductor at most~$\numprint{500000}$, this holds with~$C=17$. In view
of this conjecture, any~$(E,E',p)$ with $p > C$ arises from an
isogeny, hence its symplectic type is easily determined by the isogeny
criterion.

\subsection{Modular parametrizations}
The modular curve $X(p)$ parametrizes elliptic curves with full
level~$p$ structure. It has genus~$0$ for $p=2,3,5$, genus~$3$ for
$p=7$ and genus~$\ge26$ for $p\ge11$.  Fixing an elliptic
curve~$E/K$, the curve $X_E(p)$, which is a twist of~$X(p)$ (and
hence has the same genus), parametrizes pairs $(E',\phi)$ such that
$\phi$ is a symplectic isomorphism~$E[p]\cong E'[p]$; similarly
$X_E^-(p)$ parametrizes antisymplectic isomorphisms. Note that the pair $(E,\id)$ constituted by $E$ itself and the identity map
gives a base point defined over~$K$ on~$X_E(p)$, while $X_E^-(p)$ may have no $K$-rational points.

It follows that congruences modulo~$p$ for $p\le 5$ are common.  There
are certainly many mod~$3$ and mod~$5$ congruences in the database,
but we have not searched for these systematically. Indeed, since
$X_E(3)$ and $X_E(5)$ have genus~0, for each fixed~$E$, there will
always be congruences that are not part of the database independently
of its range. In contrast, for primes~$p \geq 7$, the curve $X_E(p)$
has genus~$\geq 3$ and so for each~$E$ there are only finitely many
mod~$p$ congruences with~$E$, hence the database
might contain all such congruences; however proving this fact for a
fixed~$E$ is a hard problem.

For convenience 
we sometimes also write $X_E^+(p)$ to denote $X_E(p)$. 
By ``explicit equations'' for $X_E^{\pm}(p)$, we mean the
following.
\begin{itemize}
  \item An explicit model for a family of curves, with equations whose
    coefficients are polynomials in $\Q[a,b]$, such that specializing
    $a,b$ gives a model for $X_E^{\pm}(p)$ where $E$ is the elliptic
    curve with equation $Y^2=X^3=aX+b$; each rational point $P$ is
    either a cusp of the modular curve or encodes a pair $(E',\phi)$
    such that $(E,E', p)$ is a symplectic (respectively,
    antisymplectic) triple.
    \item A rational function with coefficients in $\Q(a,b)$ defining
      the map $j: X_E^{\pm}(p) \to \PP^1$, taking a point
      $P=(E',\phi)$ to $j(E')$.  The degree of this map is the
      index~$[\PSL_2(\Z):\Gamma(p)] = |\PSL_2(\Fp)|$: for example, when
        $p=7$ the degree is~$168$.
      \item Rational functions $c_4$ and $c_6$ such that for each
        non-cuspidal point~$P=(E',\phi)$, a model for $E'$ is
        $Y^2=X^3+a'X+b'$ where $(a',b')=(-27c_4(P),-54c_6(P))$.
\end{itemize}
For $p=3$ and~$p=5$, the curves themselves have genus~$0$ and hence we
do not need equations, but the formulas for $j$, $c_4$ and~$c_6$ are
still useful.  They may be found in \cite{Rubin-Silverberg},
\cite{Rubin-Silverberg2} and \cite{Silverberg} for the symplectic
case, and in \cite{FisherHessian} and \cite{FisherQuintic} for the
anti-symplectic case.

Equations for $X_E(7)$, in this sense, were obtained by Kraus and
Halberstadt in~\cite{Halberstadt-Kraus-XE7}, though the functions
$c_4$ and $c_6$ in \cite{Halberstadt-Kraus-XE7} are only defined away
from $5$ points (which may or may not be rational). Fisher gives more
complete equations for this, together with $X_E^-(7)$ and
$X_E^{\pm}(11)$, in~\cite{Fisher}.

\subsection{Further motivation} \label{S:motivation}
We finish this introduction with a discussion on how the methods of
this paper complement the local methods in~\cite{FKSym}.

From the discussion so far we know that triples $(E,E',p)$ as above
give rise to $\Q$-points on one of the modular curves~$X_E(p)$ or $X_E^-(p)$. 


As mentioned above, it follows from~\cite{FKSym} that if no local
symplectic criterion applies to~$(E,E',p)$ then the image of
$\rhobar_{E,p}$ is irreducible and contains no element of
order~$p$. Moreover, when $\rhobar_{E,p}$ is reducible
only the local criteria at primes of multiplicative or good reduction may
succeed and the bounds on a prime~$\ell$ for which a local
criterion at~$\ell$ applies may be very large.  From the
strong form of Serre's uniformity conjecture, these `bad' cases for
the local methods imply that either $E$ has complex multiplication
(CM), or one of the following holds:
\begin{itemize}
 \item[(i)] $p=3,5,7$ and $\rhobar_{E,p}$ is reducible or has image the normalizer of a Cartan subgroup;
 \item[(ii)] $p=11$ and $\rhobar_{E,p}$ has image the normalizer of non-split Cartan subgroup;
 \item[(iii)] $p=13$ and $\rhobar_{E,p}$ is reducible; 
 \item[(iv)] $p=13$ and $\rhobar_{E,p}$ has exceptional image projectively isomorphic to $S_4$;
 \item[(v)] $p=11,17$ or $37$, and $j(E)$ is listed
 in \cite[Table~2.1]{DahmenPhD}; in particular, $\rhobar_{E,p}$ is reducible.
\end{itemize}
In \cite[Corollary~1.9]{BarinderCrem} there is a list of~$3$ rational
$j$-invariants of elliptic curves over~$\Q$ satisfying~(iv); it has
recently been shown (see~\cite{BDMTV-S4}) that the associated
genus~$3$ modular curve~$X_{S_4}(13)$ found explicitly
in~\cite{BarinderCrem} has no more rational points, and hence that
this list is complete. By Theorem~\ref{T:twist}, none of these
curves is mod~$13$ congruent to a twist of another of them
(including itself); the same is true for the curves and values of~$p$
in case~(v). Thus no examples arise in cases (iv) and (v).

The case~(iii) includes the infinitely many curves with
$\rhobar_{E,13}$ reducible (recall that $X_0(13)$ has genus~0). However, 
we know of no reducible mod~$13$ congruences between rational elliptic curves, so there are no known examples in this case either. Nevertheless, in spite of the lack of helpful bounds as mentioned above, a putative congruence
between such curves will often be addressed by local methods in
practice. Indeed, the bounds are very large as they depend on
Tchebotarev density theorem to predict a prime~$\ell$ of good
reduction for~$E$ where Frobenius has order multiple of~$p$ but,
in practice, it is usually easy to find such a prime~$\ell$ after
trying a few small primes.
We refer to \cite[Example~31.2]{FKSym} for
an example with $p=7$ analogous to the discussion in this paragraph.

Our method described in Section~\ref{S:statistics} for $p=7$ could be
adapted by replacing the modular curves~$X_E(7)$ by $X_E(p)$ when
explicit equations for the latter are known, which is the case for
$p=3,5$ and~$11$. This method works independently of the image
of~$\rhobar_{E,p}$, so covers the remaining cases (i),~(ii) entirely and also case (v) with $p=11$.

\subsection{Notation} \label{S:notation}
For $p$ an odd prime, define $p^*=\pm p\equiv1\pmod4$, so that
$\Q(\sqrt{p^*})$ is the quadratic subfield of the cyclotomic
field~$\Q(\zeta_p)$.

Let $D_{n}$ denote the dihedral group with~$2n$ elements and $C_n
\subset D_{n}$ for a normal cyclic subgroup of order~$n$; note that
$C_n$ is unique unless $n=2$, in which case $D_{n} \simeq C_2 \times
C_2$ and there are three such subgroups.  We will also denote by $C
\subset \GL_2(\Fp)$ a Cartan subgroup (either split or non-split) and
by $N$ its normalizer in $\GL_2(\Fp)$.

For a number field~$K$ we denote by~$G_K$ the absolute Galois group
of~$K$.

Let $\eps_d$ be the quadratic character of $G_K$ associated to the
extension $K(\sqrt{d})/K$.

For $a, b \in K$, we write $E_{a,b}$ for the elliptic
curve defined over~$K$ by the short Weierstrass equation
$Y^2=X^3+aX+b$; every elliptic curve over~$K$ has such a model, unique
up to replacing $(a,b)$ by~$(au^4,bu^6)$ with $u\in K^*$.

We will denote by~$I$ the identity matrix in~$\GL_2(\Fp)$.

\subsection{Acknowledgments} We thank Alain Kraus and Samir Siksek for helpful conversations. We also thank Tom Fisher for pointing 
us to the work of Halberstadt~\cite{Halberstadt-11nonsplit}. We also thank the anonymous referee for the careful reading of the paper and various helpful comments.

\section{Congruences between Twists}\label{S:cong-twist}

Congruences between elliptic curves which are twists of each other
arise in a number of ways in our study; these are often between
quadratic twists but they also occur between higher order twists. In
this section we study all types of congruences between twists.

Let $p$ be an odd prime.

Let $K$ be a number field and $E/K$ an elliptic curve. If the
representation $\rhobar_{E,p}$ has image contained in the
normaliser~$N$ of a Cartan subgroup~$C$ of~$\GL_2(\Fp)$ but not
in~$C$ itself, then the projective image $\PP \rhobar_{E,p}(G_K)$ is
isomorphic to~$D_{n}$ for some~$n \geq 2$
(see~\cite[Theorem~XI.2.3]{LangModForms}). The preimage of~$C$
in~$G_K$ then cuts out a quadratic extension~$M = K(\sqrt{d})$
satisfying $\PP \rhobar_{E,p}(G_M) \simeq C_n$.  We will see in
Theorem~\ref{T:twist} that in this setting there is a mod~$p$
congruence between~$E$ and its quadratic twist by~$d$. This
construction is the main source of congruences between twists and it
appeared already in~\cite[\S2]{Halberstadt-11nonsplit}, including a
determination of the symplectic type of the congruence; our
contribution for this type of congruences is to show that congruences
between quadratic twists only occur via this construction, when the
projective image is dihedral: see Theorem~\ref{T:quadratic}. The
second contribution of this section is to classify congruences between
higher order twists in Theorem~\ref{T:higherTwists} and describe
their symplectic type in~Theorem~\ref{T:typeHigher}.

\subsection{Twists of elliptic curves}
\label{S:twists}
Here we recall some standard facts about elliptic curves and their twists.
Let $E$ be an elliptic curve defined over any field~$K$ of
characteristic~$0$.

The twists of $E$ over $K$ are parametrized by $H^1(G_K,\Aut(E))$.  If
$E'$ is a twist of~$E$, then by definition there exists a $\Kbar$-isomorphism
$t:E\to E'$ so that, for all $P\in E(\Kbar)$, we have 
$\sigma(t(P))=\psi(\sigma)t(\sigma(P))$ 
where~$\psi:G_K\to\Aut(E')\cong\Aut(E)$ is a cocycle.  Here,
$\Aut(E)\cong\mu_n$ is cyclic of order~$n=2$, $4$ or $6$ according as
$j(E)\not\in\{0,1728\}$, $j(E)=1728$ and $j(E)=0$ respectively.  By
Kummer theory, $H^1(G_K,\Aut(E))\cong H^1(G_K,\mu_n)\cong
K^*/(K^*)^n$; hence each $n$-twist is determined by a parameter~$u\in
K^*$ whose image in $K^*/(K^*)^n$ determines the isomorphism class of
the twist. 

In the cases where $\Aut(E)\not=\{\pm1\}$, we make two elementary, but
important, observations. First, $G_K$ acts non-trivially on~$\Aut(E)$,
unless $-1$ or $-3$ (respectively for the cases $n=4$ and $n=6$) are
squares in~$K$, so the cocycle is usually not a homomorphism;
secondly, there are \emph{two} isomorphisms
$\calO=\Z[\zeta_n]\cong\End(E)$ (differing by complex conjugation in
$\calO$), when $n=4$ or~$6$, and hence two actions of
$\calO^*\cong\Aut(E)$ on~$E$.  We fix isomorphisms $\calO\cong\End(E)$
and $\calO\cong\End(E')$, and hence isomorphisms
$\mu_n=\calO^*\cong\Aut(E)\cong\Aut(E')$, which are normalised (in the
sense of \cite[Prop.~I.1.1]{SilvermanII}), so that $\zeta\cdot
t(P)=t(\zeta\cdot P)$ for $P\in E(\Kbar)$ and $\zeta\in\calO^*$. Then
the twist isomorphism~$t$ is an isomorphism of~$\calO$-modules, and we
may view the twisting cocycle~$\psi$ as taking values in~$\calO^*$.

Explicitly, in terms of a short Weierstrass equation~$E_{a,b}$ for~$E$, we fix
the action of~$\zeta\in\calO^*$ to be
$(x,y)\mapsto(\zeta^2x,\zeta^3y)$, and the $n$-twist by~$u\in K^*$ to be
$t:(x,y)\mapsto(v^2x,v^3y)$ where $v \in \overline{K}$ satisfies $v^n=u$. Then the associated cocycle is
$\psi(\sigma)=\sigma(v)/v$, we have
\begin{equation}\label{E:twist}
  \sigma(t(P)) = \psi(\sigma)t(\sigma(P)) = t(\psi(\sigma)\sigma(P))
   \quad \text{ for all } \sigma \in G_K,
  \; P \in E(\Kbar)
\end{equation}
and~$E$ and its $n$-twist by~$u$ become isomorphic
over~$K(\root n\of u)$, which is an extension of~$K$ of degree
dividing~$n$.
We end this subsection with a brief discussion of each kind of twists.

\emph{Quadratic twists} ($n=2$): we denote by~$E^d$ the quadratic
twist of~$E$ by~$d\in K^*$.  If $E=E_{a,b}$ then $E^d=E_{ad^2,bd^3}$.

\emph{Higher twists} ($n=4$ and $n=6$): Quartic twists only exist when
$j(E)=1728$.  The short Weierstrass model of such a curve has the
form~$E_{a,0}$, and its quartic twist by~$u$ is $E_{au,0}$.  When
$u=d^2$, the quartic twist by~$u$ is the same as the quadratic twist
by~$d$.

Sextic twists only exist when $j(E)=0$.  The short Weierstrass model
of such a curve has the form~$E_{0,b}$, and its sextic twist by~$u$ is
$E_{0,bu}$.  The sextic twist by~$u=d^3$ is the same as the quadratic
twist by~$d$.

\begin{remark}\label{R:2-3-isog}
The quartic twist by~$u=-4$ is trivial if and only if $-1$ is a square
in~$K$, since $-4=(1+\sqrt{-1})^4$.  The curves $E_{a,0}$
and~$E_{-4a,0}$ are $2$-isogenous over~$K$ (see
\cite[p.~336]{SilvermanI}); over $K(\sqrt{-1})$ this $2$-isogeny is
the endomorphism $1+\sqrt{-1}$.  Note that this twist is \emph{not} a
quadratic twist, despite the fact that the curve and its twist become
isomorphic over a quadratic extension.  Taking $v=1+\sqrt{-1}$ so that
$v^4=-4$, the cocycle~$\psi$ takes value
$\psi(\sigma)=\sigma(1+\sqrt{-1})/(1+\sqrt{-1})=-\sqrt{-1}$ when
$\sigma$ does not fix~$\sqrt{-1}$.

The sextic twist by~$u=-27$ is the quadratic twist by~$-3$, and is
trivial if and only if $-3$ is a square in~$K$.  The curves $E_{0,b}$
and~$E_{0,-27b}$ are $3$-isogenous over~$K$; over $K(\sqrt{-3})$ this
$3$-isogeny is the endomorphism $\sqrt{-3}$.
\end{remark}

\subsection{The mod~$p$ Galois representations of twists}\label{SS:GalRepTwist}
We now consider the effect of twisting~$E$ on the associated mod~$p$ Galois
representations~$\rhobar_{E,p}$.
This is straightforward in the case of quadratic
twists, but more involved for higher twists.

Let~$K$ be a number field. Let $E$ and~$E'$ be elliptic curves
over~$K$ having the same $j$-invariant $j = j(E) = j(E')$. Assume they
are not isomorphic over~$K$. Then $E$ and $E'$ are (non-trivial)
twists and become isomorphic over an extension $L/K$. Write $d = [L :
  K]$.

From the discussion in section~\ref{S:twists}, we have a twist map $t : E \to E'$ with
an associated cocycle~$\psi : G_K \to \mu_m \subseteq\calO^*$, where
$m\in\{2,3,4,6\}$, the \emph{order} of the twist, is the order the
subgroup of~$\calO^*$ generated by~$\psi(G_K)$.  We have the following
cases:
\begin{itemize}
 \item[(i)] for arbitrary~$j$: $m=d=2$ (quadratic twists); and
   additionally,
 \item[(ii)] for $j = 1728$ only: $m=4$ and $d \in \{2,4\}$  (quartic
   twists); and
  \item[(iii)] for $j = 0$ only: $m=d\in\{3,6\}$ (cubic or sextic twists).
\end{itemize}

When $m=4$, the case $d=2$ occurs only for the special
quartic twist by~$-4$ as in Remark~\ref{R:2-3-isog}.

Denote by $t_p : E[p] \to E'[p]$ the restriction of~$t$ to the
$p$-torsion. Then~\eqref{E:twist} becomes
\begin{equation}\label{E:cocycle}
  \sigma(t_p(P)) = \psi(\sigma) \cdot t_p(\sigma(P))
                 = t_p(\psi(\sigma)\cdot\sigma(P)) 
   \quad \text{ for all } \sigma \in G_K,
  \; P \in E[p].
\end{equation}
Let $P_1, P_2$ be a basis of~$E[p]$, so that $t_p(P_1), t_p(P_2)$ is a
basis of~$E'[p]$.  With respect to these bases, the map~$t_p$ is
represented by the identity matrix in~$\GL_2(\Fp)$ and~$\psi(\sigma)$
by a matrix~$\Psi(\sigma)$ (the same matrix on both $E[p]$
and~$E'[p]$).  Then~\eqref{E:cocycle} implies, for all $\sigma \in
G_K$, the matrix equation
\begin{equation} \label{E:PsiMatrix}
 \rhobar_{E',p}(\sigma) = \rhobar_{E,p}(\sigma) \cdot \Psi(\sigma). 
\end{equation}
For quadratic twists, $\Psi(\sigma)=\pm I$, but in
general~$\Psi(\sigma)$ is not scalar.  Note that $\det\Psi(\sigma)=1$
in all cases, since the determinants of~$\rhobar_{E,p}$
and~$\rhobar_{E',p}$ are both given by the cyclotomic character.

The map $\Psi:G_K\to\SL_2(\Fp)$ becomes a homomorphism over an
extension $K'/K$ given by $K'=K$ if $m=2$, $K'=K(\sqrt{-1})$ if $m=4$
and $K'=K(\sqrt{-3})$ if $m=3,6$. In the quadratic case, $\Psi$
matches the quadratic character~$\varepsilon_d$ associated to the
quadratic extension~$K(\sqrt{d})$ of~$K$ over which the curves become
isomorphic.

In general, the representation attached to the twist of~$E$ is
obtained from that of~$E$ itself by twisting by the cocycle~$\Psi$, with
values in~$\GL_2(\Fp)$.  For quadratic twists this is just the tensor product
by a quadratic character.
We summarize this discussion in the following.

\begin{lemma}\label{L:twist-rep} \hfill
  \begin{enumerate}
\item Let $E'$ be the twist of~$E$ by a cocycle~$\psi$. Then there is
  an isomorphism $t :E\to E'$, defined over an extension of $K$ of
  degree at most~$6$, satisfying~\eqref{E:twist}.
  In the case of a quadratic twist with $E'=E^d$, the isomorphism $t$
  is defined over $K(\sqrt{d})$ and satisfies
\begin{equation}\label{E:iso-twist}
\sigma(t (P)) = \eps_d(\sigma)t (\sigma(P)) \qquad\text{for
  all~$\sigma\in G_K$ and $P\in E(\Kbar)$}.
\end{equation}
\item For all primes~$p$ we have in general the matrix
  equation~\eqref{E:PsiMatrix}.
In the case of quadratic twists this simplifies to
  \begin{equation} \label{E:rho-twist}
  \rhobar_{E^d,p} \cong \rhobar_{E,p}\otimes\eps_d.
\end{equation}
\end{enumerate}
\end{lemma}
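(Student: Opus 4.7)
The plan is to derive both parts of the lemma by unpacking the explicit twist construction laid out in Section~\ref{S:twists} and then transferring the identity~\eqref{E:twist} to matrices on $p$-torsion by choosing compatible bases on $E[p]$ and $E'[p]$.

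For part~(1), I would start from a short Weierstrass model $E=E_{a,b}$ and use the explicit description of twists recalled in Section~\ref{S:twists}: the $n$-twist by $u\in K^*$ is given in coordinates by $t\colon(x,y)\mapsto(v^2x,v^3y)$ for a choice of $v\in\Kbar$ with $v^n=u$. Since $v$ lies in $K(\sqrt[n]{u})$ and $n\le 6$, the isomorphism $t$ is defined over an extension of $K$ of degree dividing~$6$. The cocycle relation is then a direct coordinate computation: if $P=(x,y)$ and $\zeta=\psi(\sigma)=\sigma(v)/v$, then applying $\sigma$ to $t(P)=(v^2x,v^3y)$ gives $(\zeta^2v^2\sigma(x),\zeta^3v^3\sigma(y))$, which equals $\psi(\sigma)\cdot t(\sigma(P))$ once we recall that $\zeta\in\calO^*$ acts on $E'$ by $(x,y)\mapsto(\zeta^2x,\zeta^3y)$. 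This verifies~\eqref{E:twist}. In the quadratic case, $n=2$ and $v=\sqrt{d}$, so the cocycle $\psi(\sigma)=\sigma(\sqrt{d})/\sqrt{d}\in\{\pm1\}$ is exactly the character $\eps_d$, yielding~\eqref{E:iso-twist}.

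For part~(2), restrict $t$ to the $p$-torsion to get $t_p\colon E[p]\to E'[p]$ satisfying~\eqref{E:cocycle}. Choose a basis $P_1,P_2$ of $E[p]$ and take $t_p(P_1),t_p(P_2)$ as the basis of $E'[p]$; in these bases the matrix of $t_p$ is the identity. By the normalisation of the isomorphisms $\calO\cong\End(E)$ and $\calO\cong\End(E')$ recorded in Section~\ref{S:twists}, the endomorphism $\psi(\sigma)\in\calO^*$ commutes with $t_p$, so its matrix $\Psi(\sigma)$ is the same on $E[p]$ and $E'[p]$. Now read off~\eqref{E:cocycle} in coordinates: the left-hand side $\sigma(t_p(P))$ corresponds to $\rhobar_{E',p}(\sigma)$ acting on the coordinate vector of $t_p(P)$, while the right-hand side $t_p(\psi(\sigma)\sigma(P))$ corresponds to first applying $\rhobar_{E,p}(\sigma)$ and then the scalar~$\psi(\sigma)$ (or vice versa, since the roles commute up to reindexing of bases). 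Equating coordinate vectors yields the matrix equation~\eqref{E:PsiMatrix}. For quadratic twists, $\Psi(\sigma)=\eps_d(\sigma)I$ is central, so~\eqref{E:PsiMatrix} simplifies to $\rhobar_{E^d,p}(\sigma)=\eps_d(\sigma)\,\rhobar_{E,p}(\sigma)$, which is exactly~\eqref{E:rho-twist}.

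The only mild obstacle is bookkeeping: one must carefully check that the matrix $\Psi(\sigma)$ really represents $\psi(\sigma)$ consistently as an endomorphism of both $E[p]$ and $E'[p]$, and that the order of multiplication in~\eqref{E:PsiMatrix} is the one dictated by~\eqref{E:cocycle} under the chosen basis conventions. Once the compatible bases are fixed and the normalisation of $\calO$-actions from Section~\ref{S:twists} is invoked, everything else is formal.
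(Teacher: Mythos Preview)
Your proposal is correct and follows essentially the same approach as the paper: the lemma is stated there as a summary of the preceding discussion in Sections~\ref{S:twists} and~\ref{SS:GalRepTwist}, and your argument simply makes that discussion explicit by carrying out the coordinate computation for~\eqref{E:twist} and then passing to matrices via compatible bases on $E[p]$ and $E'[p]$. The only addition you make is spelling out the verification $\sigma(t(P))=\psi(\sigma)\cdot t(\sigma(P))$ in Weierstrass coordinates, which the paper leaves implicit.
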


Assuming that $E$ and $E'$ are $p$-congruent, there exists an
isomorphism $\phi : E[p] \to E'[p]$
of $G_K$-modules. 
Choosing
compatible bases for~$E[p]$ and~$E'[p]$ as above, let~$A \in
\GL_2(\Fp)$ be the matrix representing~$\phi$ with respect to them.
Then, using~\eqref{E:PsiMatrix}, we have
\begin{equation}\label{E:PsiM}
A \rhobar_{E,p}(\sigma) A^{-1} = \rhobar_{E',p}(\sigma) =
\rhobar_{E,p}(\sigma) \cdot \Psi(\sigma) \quad \text{ for all } \sigma
\in G_K
\end{equation}
and the 
symplectic type of~$\phi$ is determined by the square class of $\det A$; this latter conclusion follows 
from the fact that~$t_p$ preserves the Weil pairing
and~\cite[Lemma~6]{FKSym}. Moreover, if there is another $A' \in
\GL_2(\Fp)$  satisfying~\eqref{E:PsiM} then $\det A' = \det A \cdot
\lambda^2$ by Proposition~\ref{P:conditionS} (under the natural
assumption that~$E[p]$ satisfies condition~{\bf (S)}) and so the symplectic type of~$\phi$ is also determined by~$\det A'$ mod squares.

\subsection{Projectively dihedral images}
\label{SS:dihedral}
From Proposition~\ref{P:conditionS}, \condS\ follows from
absolute irreducibility.  Conversely, the next result shows that, in
the presence of a $p$-congruence between twists, \condS\
implies that $\rhobar_{E,p}$ is absolutely irreducible.  Clearly,
elliptic curves with CM can only satisfy \condS\ when the
CM is not defined over the base field~$K$, or when~$p$ ramifies in the
CM field, as otherwise the image is abelian.

\begin{lemma} \label{L:noCyclic}
Let $E/K$ be an elliptic curve and $p$ an odd prime such that~$E[p]$
satisfies \condS. Assume further that $j(E) \neq 0$ if $p=3$.  

If $E$ is $p$-congruent to a
twist~$E'$ then $\rhobar_{E,p}$ is absolutely irreducible.
\end{lemma}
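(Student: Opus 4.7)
The plan is to argue by contradiction: assume $\rhobar_{E,p}$ is not absolutely irreducible and derive a contradiction from the existence of a $p$-congruence between~$E$ and a non-trivial twist~$E'$. First I would reduce to showing irreducibility over~$\Fp$: a two-dimensional $\Fp$-representation fails to be absolutely irreducible precisely when it is either reducible over~$\Fp$ or irreducible with image in a non-split Cartan, and the latter possibility is excluded by Proposition~\ref{P:conditionS}(2) under condition~{\bf(S)}.

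Assuming reducibility, write $\rhobar_{E,p} \cong \mat{\chi_1}{*}{0}{\chi_2}$. The extension is necessarily non-split, since otherwise the image lies in a split Cartan, again contradicting~{\bf(S)}. Hence $E[p]$ has a unique $G_K$-stable line, on which $G_K$ acts via~$\chi_1$.

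If $E'$ is a quadratic twist $E^d$, Lemma~\ref{L:twist-rep}(2) gives $\rhobar_{E',p} \cong \rhobar_{E,p}\otimes\eps_d$, so $E'[p]$ is also a non-split reducible representation whose unique stable line carries the character $\chi_1\eps_d$. Any $G_K$-equivariant isomorphism $E[p]\to E'[p]$ must carry stable line to stable line, forcing the character equality $\chi_1 = \chi_1\eps_d$; since $\eps_d$ is non-trivial, this is a contradiction.

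If $E'$ is a higher (quartic or sextic) twist, then $j(E)\in\{0,1728\}$ and $E$ has CM by $\calO=\Z[i]$ or $\Z[\zeta_3]$. When the CM is defined over~$K$, the image of $\rhobar_{E,p}$ factors through $(\calO/p\calO)^*$, which embeds as a Cartan in $\GL_2(\Fp)$ whenever $p$ is unramified in~$\calO$, contradicting~{\bf(S)}. When the CM is defined only over~$\Kbar$, the image lies in the normalizer~$N$ of a Cartan~$C$ and contains elements of $N\setminus C$; for $p$ unramified in~$\calO$ such elements admit no $\Fp$-stable line, forcing $\rhobar_{E,p}$ to be irreducible, contrary to assumption. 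In both subcases the only remaining possibility is that $p$ ramifies in~$\calO$, i.e., $p=2$ (excluded since $p$ is odd) or $p=3$ paired with $j(E)=0$ (excluded by hypothesis). I expect the main delicacy to lie in the higher-twist case: the cocycle~$\Psi$ of Lemma~\ref{L:twist-rep} is no longer scalar, so the clean character-comparison used for quadratic twists is unavailable, and one must instead leverage the rigid CM structure imposed by $j(E)\in\{0,1728\}$ together with ramification data for the CM field at~$p$.
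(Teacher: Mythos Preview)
Your proof is correct and follows essentially the same route as the paper's: dispose of the CM cases $j\in\{0,1728\}$ first by using that condition~{\bf(S)} together with $p$ unramified in the CM order forces the image into the normaliser of a Cartan but not into the Cartan itself, which is incompatible with a reducible (non-semisimple) shape; then settle the quadratic-twist case by comparing the character on the unique $G_K$-stable line, exactly as the paper does. One small imprecision worth tightening: individual elements of $N\setminus C$ can certainly admit an $\Fp$-stable line (they have trace~$0$, so eigenvalues $\pm\lambda$), so the correct claim is that the \emph{image as a whole} has no common stable line---the non-scalar Cartan elements (whose presence is guaranteed by~{\bf(S)}, since otherwise the image would be abelian of order prime to~$p$) fix only the two Cartan eigenspaces, and any element of $N\setminus C$ swaps them.
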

\begin{proof}
For a contradiction, suppose $\rhobar_{E,p}$ is absolutely reducible. Thus
\begin{equation} \label{E:absRed}
  \rhobar_{E,p} \otimes \Fbar_p \cong 
 \begin{pmatrix}
 \chi_1 & h \\ 0 &\chi_2
 \end{pmatrix} \quad \text{ with } \; h \neq 0, 
\end{equation} 
where $h \neq 0$ follows directly from
Proposition~\ref{P:conditionS} part (4).


Note that we cannot have $j(E) =0$ or $1728$, as then
$E$ would have CM by~$\Q(i)$ or $\Q(\sqrt{-3})$, and the hypothesis
on~$p$ would imply that the image would be in the normalizer of a
Cartan subgroup, contradicting~\eqref{E:absRed}. Therefore $E$ only admits
quadratic twists, and we have $E' = E^d$ for some non-square $d \in
K^*$.

Let $\eps_d$ be the quadratic character associated to the extension
$K(\sqrt{d})/K$. From the hypothesis $E[p] \simeq E^d[p]$, part 2) of
Lemma~\ref{L:twist-rep} and~\eqref{E:absRed} it follows that $\chi_1
=\chi_1\eps_d$ since both give the Galois action on the unique fixed
line. This contradicts ~$\eps_d \neq 1$ as $p \neq 2$.
\end{proof}

Under the natural \condS, the previous lemma tells us we can assume
that $\rhobar_{E,p}$ is absolutely irreducible for our study of
congruences between twists. In fact more is true: the next proposition
says that congruences between twists arise when the projective image
is dihedral of order at least~4 (equivalently, when the image is
contained in the normaliser of a Cartan subgroup but not in the Cartan subgroup
itself), and only then.  The converse part of the following result is
already contained in~\cite[\S2]{Halberstadt-11nonsplit}, with a
different proof and without the uniqueness statement.

\begin{theorem}\label{T:twist}
Let $E/K$ be an elliptic curve and $p$ an odd prime such that~$E[p]$
satisfies \condS. 


1) Assume further $j(E) \neq 0$ if $p=3$. If $E$ is $p$-congruent to a twist, 
then the image of~$\rhobar_{E,p}$ is contained in the normaliser of a
Cartan subgroup of $\GL_2(\Fp)$ but not in the Cartan subgroup itself and
$\PP \rhobar_{E,p} \simeq D_n$ for some $n \geq 2$.

2) Conversely, if the image of~$\rhobar_{E,p}$ is absolutely irreducible and contained in the
normaliser of a Cartan subgroup of $\GL_2(\Fp)$ but not in the Cartan
subgroup itself, 
then we have $E[p]\cong E'[p]$ where $E'$ is the (non-trivial) twist associated to
the quadratic extension~$K(\sqrt{d})$ cut out by the Cartan subgroup.
This is the quadratic twist~$E^d$ unless $j(E)=1728$ and $d=-1$, in
which case it is the quartic twist by~$-4$.

Moreover, there is a unique such twist~$E'$ which is $p$-congruent to~$E$, except when the
projective image has order~$4$, in which case there are three (non-trivial) such
twists.
\end{theorem}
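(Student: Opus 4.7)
My plan is to reduce both directions to the cocycle identity~\eqref{E:PsiM} arising from any congruence between twists, combined with the structure of the projective image. The key preliminary in both directions is that Lemma~\ref{L:noCyclic} applies under the stated hypotheses, so $\rhobar_{E,p}$ is absolutely irreducible and, by Schur's lemma, the centraliser of $G:=\rhobar_{E,p}(G_K)$ in $\GL_2$ over an algebraic closure of~$\Fp$ reduces to scalars.

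For direction~(1), the congruence produces $A\in\GL_2(\Fp)$ satisfying $A\rhobar_{E,p}(\sigma)A^{-1}=\rhobar_{E,p}(\sigma)\Psi(\sigma)$. In the quadratic-twist case $\Psi=\eps_d\cdot I$ is scalar, so $\bar A$ is a non-trivial element of $\PGL_2(\Fp)$ commuting with $\PP G:=\PP\rhobar_{E,p}(G_K)$. Writing $AgA^{-1}=\mu(g)g$ and taking traces forces $A$ to commute with every $g\in G$ of non-zero trace. A second application of the cocycle identity shows $A^2$ commutes with $G$, so $A^2$ is scalar by Schur; hence $A$ has distinct eigenvalues and centraliser a Cartan subgroup~$C$. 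The trace-nonzero elements of $G$ therefore generate a subgroup of~$C$, and an elementary matrix calculation in a basis diagonalising~$A$ shows that the remaining (trace-zero) elements of~$G$ lie in the normaliser~$N$ of~$C$. Absolute irreducibility rules out both $G\subseteq C$ and $|\PP G|=2$, so $\PP G\simeq D_n$ with $n\geq 2$. For higher-order twists, $E$ has CM by $\calO=\Z[\zeta_n]$ and standard CM theory places the image in the normaliser of the CM-Cartan, while \condS\ excludes the Cartan itself.

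For direction~(2), the surjection $G_K\twoheadrightarrow G\to N/C\cong C_2$ cuts out the quadratic extension $K(\sqrt{d})/K$. I would exhibit the congruence by choosing a non-scalar $A\in C$ satisfying $gAg^{-1}=-A$ for every $g\in G\cap(N\setminus C)$: such an $A$ exists and is unique up to scalar. A direct verification, using that $C$ is abelian, then yields $A\rhobar_{E,p}(\sigma)A^{-1}=\eps_d(\sigma)\rhobar_{E,p}(\sigma)$, so $\rhobar_{E,p}\otimes\eps_d\cong\rhobar_{E^d,p}$ by Lemma~\ref{L:twist-rep} produces the congruence $E[p]\cong E^d[p]$. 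The exceptional case $j(E)=1728$ with $d=-1$, where $E^{-1}=E$, is handled separately using Remark~\ref{R:2-3-isog}: the quartic twist $E_{-4a,0}$ is $2$-isogenous to~$E$ over~$K$, so the isogeny criterion provides a congruence to a non-trivial twist.

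For uniqueness, the non-trivial twists of~$E$ that are $p$-congruent to it correspond bijectively (using absolute irreducibility and Schur) to non-trivial quadratic characters~$\chi$ with $\rhobar_{E,p}\otimes\chi\cong\rhobar_{E,p}$, which are in bijection with $Z_{\PGL_2(\Fp)}(\PP G)\setminus\{1\}$. A direct computation in $\PGL_2(\Fp)$ shows this centraliser has order~$2$ when $\PP G\simeq D_n$ with $n\geq 3$ (the non-trivial element is the image in $\PP C$ of the unique involution of the Cartan modulo scalars), and order~$4$ when $\PP G\simeq D_2$ is a Klein four group (which is self-centralising in $\PGL_2(\Fp)$). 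The most delicate step I anticipate is the bookkeeping in the exceptional case $j(E)=1728$ with $d=-1$, where the nominal quadratic twist collapses and must be replaced by a quartic twist; one must check that this substitution still respects the bijection with twisting characters.
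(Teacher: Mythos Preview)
Your proof is correct and follows the same overall architecture as the paper's, but with different local techniques at each step. For Part~(1) in the quadratic-twist case, the paper observes that the elements outside the index-$2$ subgroup have trace zero and then invokes the classification of subgroups of $\PGL_2(\Fp)$ from \cite{LangModForms}; you instead argue directly from $AgA^{-1}=\pm g$ that $A^2$ is scalar, so that $A$ determines a Cartan whose normaliser contains~$G$, bypassing the classification. For Part~(2), the paper compares traces and appeals to Brauer--Nesbitt (equal traces plus absolute irreducibility forces isomorphism), whereas you exhibit an explicit intertwiner $A\in C$ satisfying $gAg^{-1}=-A$ for $g\in N\setminus C$; this is precisely the matrix the paper constructs later, in the proof of Theorem~\ref{T:quadratic}, to read off the symplectic type, so your route has the merit of setting up that subsequent computation in one stroke. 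For the uniqueness count, the paper simply notes that $D_n$ has one cyclic index-$2$ subgroup for $n\ge3$ and three for $n=2$; your reformulation via $Z_{\PGL_2(\Fp)}(\PP G)$ is equivalent but requires the extra check that a Klein four subgroup of $\PGL_2(\Fp)$ is self-centralising. One small wrinkle: in Part~(1) you split~$G$ according to whether the trace vanishes, but the cleaner dichotomy is by the value of $\mu(g)=\eps_d(g)$, since trace-zero elements with $\mu(g)=1$ do occur and lie in~$C$ rather than~$N\setminus C$.
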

\begin{proof} 1) Let $E'$ be a $p$-congruent twist of~$E$. If $E'$ is
  a quartic or sextic twist of~$E$ then $j(E)=0,1728$ and $E$ has CM
  (not defined over~$K$, since the image is not in a Cartan subgroup by condition~{\bf (S)}) by
  $\Q(\sqrt{-1})$ or $\Q(\sqrt{-3})$, respectively; moreover, since $p$ is
  not ramified in the CM field, the image is inside the normalizer of
  a Cartan subgroup and is projectively
   isomorphic to~$D_n$ for some $n \geq 2$.

Therefore we can assume $E' = E^d$ is the quadratic twist of~$E$ by a
non-square $d\in K^*$.  From Lemma~\ref{L:noCyclic} we know
that~$\rhobar_{E,p}$ is absolutely irreducible.

Taking traces in~\eqref{E:rho-twist}, for all $\sigma\in G_K$ with
$\eps_d(\sigma) = -1$, the image~$\rhobar_{E,p}(\sigma)$ has
trace~$0$.  So the image $H=\rhobar_{E,p}(G_K)$ has a
subgroup~$H^+=\rhobar_{E,p}(G_{K(\sqrt{d})})$ of index~$2$ such that
all elements of~$H\setminus H^+$ have trace~$0$; such elements have
order~$2$ in $\PGL_2(\Fp)$. From the irreducibility of~$E[p]$ and the
classification of subgroups of $\PGL_2(\Fp)$ (\cite[Theorem
  XI.2.3]{LangModForms}), it follows that $H$ is contained in the
normaliser of a Cartan subgroup~$C$ and $H^+=H\cap C$.

2) For the converse, suppose that $\rhobar_{E,p}$ is absolutely irreducible and has image contained in
the normalizer $N \subset \GL_2(\Fp)$ of a Cartan subgroup~$C$, but
is not contained in~$C$ itself. Thus $\PP \rhobar_{E,p} \simeq D_n$ for $n \geq 2$ and
$\Tr \rhobar_{E,p}(\sigma) = 0$
for all $\sigma \in G_K$ such that $\rhobar_{E,p}(\sigma) \not\in C$.

Let $K(\sqrt{d})$ be the quadratic extension cut out
by~$\rhobar_{E,p}^{-1}(C)$, with associated character $\eps_d$ as
above.  First suppose that we are not in the special case where
$j(E)=1728$ and $K(\sqrt{d})=K(\sqrt{-1})$.  Set $E'=E^d$, the
quadratic twist.  By~(\ref{E:rho-twist}), for all $\sigma \in G_K$ we
have the following equality of traces
\[\Tr \rhobar_{{E^d},p}(\sigma) = \eps_d(\sigma) \cdot \Tr \rhobar_{E,p}(\sigma).\]

Clearly, if $\eps_d(\sigma)=1$ then $\Tr \rhobar_{E,p}(\sigma) = \Tr \rhobar_{E^d,p}(\sigma)$. 
If $\eps(\sigma) = -1$ then $\rhobar_{E,p}(\sigma) \in \rhobar_{E,p} (G_K) \backslash C$ and $\Tr \rhobar_{E,p}(\sigma) = 0$, so also $\Tr \rhobar_{E^d,p}(\sigma) = 0$.
Then, $\Tr \rhobar_{E,p}(\sigma) = \Tr \rhobar_{E^d,p}(\sigma)$ 
for all $\sigma \in G_K$. 
Since $\rhobar_{E,p}$ and $\rhobar_{E^d,p}$ are absolutely irreducible
and have the same traces, they are isomorphic.

In the special case, $E'$ is the quartic twist of~$E$ by~$-4$, since
these become isomorphic over~$K(\sqrt{-1})$; now $E$ and~$E'$ are
isogenous, so are $p$-congruent for all odd~$p$.

For the last part, we note that $D_2\cong C_2\times C_2$ has three
cyclic subgroups of index~$2$, while $D_n$ for $n\ge3$ has only one
such subgroup.
\end{proof}

\subsection{The symplectic type of congruences between quadratic twists}
\label{SS:typeQuadratic}
A special case of the situation described 
in Theorem~\ref{T:twist} is the case
of elliptic curves with~CM.  Here, the quadratic twists are isogenous
to the original curve so we may already determine the symplectic nature
of the congruence.  For simplicity we state such a result only over~$\Q$.

\begin{corollary}\label{C:CM-twist}
  Let $E/\Q$ be an elliptic curve with CM by the imaginary quadratic
  order of (negative) discriminant~$-D$.  Set $M=\Q(\sqrt{-D})$.

  \begin{enumerate}
  \item
    For $D\not=3,4$: $E[p] \simeq E^{-D}[p]$ for all primes $p \geq 5$
    unramified in~$M$.  This congruence
    is symplectic if and only if $\legendre{D}{p}=+1$.  For each
    such~$p$, $E^{-D}$ is the unique quadratic twist of~$E$
    which is $p$-congruent to~$E$.

  \item
    For $D=3$: $E[p] \simeq E^{-D}[p]$ for all primes $p
    \equiv\pm1\pmod{9}$.  This congruence is
    symplectic if and only if $\legendre{D}{p}=+1$.  For each
    such~$p$, $E^{-D}$ is the unique quadratic twist of~$E$ which is
    $p$-congruent to~$E$.

  \item
    For $D=4$: let $E'$ be the quartic twist of~$E$ by~$-4$, so that
    $E$ and $E'$ are isomorphic over~$M$ but not over~$\Q$.  Again,
    $E[p] \simeq E'[p]$ for all primes $p \geq 5$.  This congruence is
    symplectic if and only if $\legendre{2}{p}=+1$.  For each
    such~$p$, there are no quadratic twists of~$E$ which are
    $p$-congruent to~$E$.
  \end{enumerate}
\end{corollary}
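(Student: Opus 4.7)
The plan is to prove all three cases by exhibiting an explicit $\Q$-rational isogeny between $E$ and its candidate congruent curve, applying the isogeny criterion for the symplectic type, and extracting uniqueness from Theorem~\ref{T:twist}.

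For the existence of the congruence, I would produce in each case a $\Q$-rational isogeny $\psi\colon E\to E'$ of degree coprime to~$p$. The two CM cases $D=3,4$ are already covered by Remark~\ref{R:2-3-isog}: for $D=4$ ($j=1728$), the $\Q$-rational $2$-isogeny $E_{a,0}\to E_{-4a,0}$ is realised over $\Q(\sqrt{-1})$ by the CM endomorphism~$1+i$; for $D=3$ ($j=0$), the $\Q$-rational $3$-isogeny $E_{0,b}\to E_{0,-27b}$ is realised over $\Q(\sqrt{-3})$ by~$\sqrt{-3}$, and $E_{0,-27b}=E^{-3}$ because $-27=(-3)^3$ makes the sextic twist by~$-27$ coincide with the quadratic twist by~$-3$. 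For $D\neq3,4$ I would build $\psi$ directly: let $\pi=\sqrt{-D}\in\End(E)$ be the CM endomorphism of norm~$D$ (defined over~$M$ with $\sigma(\pi)=-\pi$ for~$\sigma$ non-trivial on~$M$), and let $\phi\colon E\to E^{-D}$ be the twist isomorphism (defined over~$M$ with $\sigma(\phi)=-\phi$ by~\eqref{E:iso-twist}); then $\psi=\phi\circ\pi$ satisfies $\sigma(\psi)=(-\phi)\circ(-\pi)=\psi$, so $\psi$ is $\Q$-rational of degree~$D$.

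Next I would apply the isogeny criterion to~$\psi$: since $\deg(\psi)\in\{2,D\}$ is coprime to~$p$ under the stated hypotheses, $\psi|_{E[p]}$ is a $G_\Q$-module isomorphism $E[p]\cong E'[p]$, symplectic precisely when $\deg(\psi)$ is a square modulo~$p$. This yields $\legendre{D}{p}=+1$ for $D\neq4$ and $\legendre{2}{p}=+1$ for $D=4$.

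For uniqueness, I would invoke Theorem~\ref{T:twist}(2). For a CM elliptic curve $E/\Q$ with CM not defined over~$\Q$ and $p$ unramified in~$M$, the image of~$\rhobar_{E,p}$ is contained in the normaliser of the Cartan subgroup $(\calO/p\calO)^*\subset\GL_2(\Fp)$ but not in the Cartan itself, so Theorem~\ref{T:twist} singles out, as the unique $p$-congruent non-trivial quadratic twist of~$E$, the one associated to the extension cut out by the Cartan — namely $M=\Q(\sqrt{-D})$, giving~$E^{-D}$ in cases~(1) and~(2). In case~(3), $D=4$, this extension is $\Q(\sqrt{-1})$; since quadratic twist by~$-1$ is trivial on $j=1728$ curves, no non-trivial quadratic twist is $p$-congruent to~$E$, and the congruence materialises via the quartic twist by~$-4$. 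In case~(2), $D=3$, uniqueness in Theorem~\ref{T:twist} additionally requires the projective image to be $D_n$ with $n\geq3$; I would verify (by analysing the image of the mod~$p$ Grossencharacter attached to~$E$, together with the action of the order-$6$ automorphism $\omega\in\calO^*$) that this holds exactly when $p\equiv\pm1\pmod{9}$. This final congruence condition is the principal obstacle — one must rule out the exceptional $D_2$ projective image by a direct computation with the mod~$p$ Grossencharacter — while the remaining ingredients (the $\Q$-rationality of~$\psi$, the isogeny criterion, and the identification of the Cartan-cut-out field with~$M$) are essentially routine given the framework of Sections~\ref{S:twists} and~\ref{SS:dihedral}.
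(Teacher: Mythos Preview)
Your approach is essentially the same as the paper's: both construct the $\Q$-rational isogeny $\psi=\phi\circ\sqrt{-D}$ (or the degree-$2$ isogeny for $D=4$, via Remark~\ref{R:2-3-isog}), apply the isogeny criterion for the symplectic type, and deduce uniqueness from Theorem~\ref{T:twist}. The one substantive difference is in how the projective image is controlled. The paper cites \cite[Propositions~1.14 and~1.16]{Zywina} to conclude that the image of~$\rhobar_{E,p}$ is the \emph{full} normaliser of a Cartan subgroup (the $D=3$ hypothesis $p\equiv\pm1\pmod9$ being precisely what Zywina needs there); hence the projective image is $D_{p\pm1}$, which is never $C_2\times C_2$ for $p\ge5$, and uniqueness follows uniformly from the last clause of Theorem~\ref{T:twist}. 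You treat the condition ``projective image $D_n$ with $n\ge3$'' as an issue specific to case~(2), but in fact it is required for uniqueness in all three cases: your appeal to Theorem~\ref{T:twist} in cases~(1) and~(3) is incomplete without it. Citing Zywina (or equivalently arguing that the mod~$p$ Gr\"ossencharacter surjects onto $(\calO/p\calO)^*$ when $\calO^*=\{\pm1\}$) closes this gap for all three cases simultaneously and also supplies the $p\equiv\pm1\pmod9$ condition rather than leaving it to a direct computation.
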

\begin{proof}
Since $E$ has CM by~$M$ it follows from \cite[Propositions~1.14
  and~1.16]{Zywina} that the image of~$\rhobar_{E,p}$ is the full
normalizer of a Cartan subgroup.  (Here, the
condition~$p\equiv\pm1\pmod{9}$ is needed to ensure this when $D=3$.)
Moreover, $\PP \rhobar_{E,p} \simeq D_n \neq C_2 \times C_2$ (since $p
\geq 5$) and $\PP \rhobar_{E,p}(G_M) \simeq C_n$. Thus, for each
such~$p$, in the notation of Theorem~\ref{T:twist}, we have
$K(\sqrt{d})=M$, so $E[p]\cong E^{-D}[p]$ except for $D=4$ when
$E[p]\cong E'[p]$ with~$E'$ the quartic twist of~$E$ by~$-4$.  Since
the projective image is not~$C_2\times C_2$, in each case there are no
more $p$-congruences with curves isomorphic to~$E$ over quadratic
extensions (cf.~Theorem~\ref{T:twist}).

For the symplectic types, observe that $E$ and~$E'$ are isogenous via
an isogeny of degree~$2$ for $D=4$ (see Remark~\ref{R:2-3-isog}), while
for $D\not=4$ there is an isogeny of degree $D$ from~$E$ to~$E^{-D}$,
obtained by composing the twist isomorphism with the
endomorphism~$\sqrt{-D}$, which is defined over~$\Q$.
\end{proof}

\begin{remark}
In our computed data in Section~\ref{S:statistics}, we do not see
usually isomorphisms arising from CM curves as in this corollary. This
is because (apart from the CM cases where quartic or sextic twists
occur) all such mod~$p$ isomorphisms occur within an isogeny class,
and we have omitted these from consideration.
\end{remark}

Note that if $\PP\rhobar_{E,p} (G_K)$ is cyclic  then 
$\rhobar_{E,p} (G_K)$ is abelian. 
In particular, the smallest projective image occurring when $\rhobar_{E,p}$ is absolutely irreducible is 
$\PP\rhobar_{E,p} (G_K) \simeq D_2 \simeq C_2 \times C_2$. The next
result will be used below to determine the symplectic type of
congruences in the presence of this kind of image.

\begin{lemma}\label{L:C2xC2}
Let $p$ be an odd prime.  Let $N$ be a subgroup of~$\PGL_2(\Fp)$
isomorphic to $C_2\times C_2$, so that there are three subgroups~$C< N$
of index~$2$.

If $N\le\PSL_2(\Fp)$, then every such~$C$ is contained in a Cartan
subgroup and $N$ in its normalizer, and each~$C$ is split when
$p\equiv1\pmod4$ and non-split when $p\equiv3\pmod4$.

If $N\not\le\PSL_2(\Fp)$, then for $p\equiv1\pmod4$, one such
subgroup~$C$ is contained in a split Cartan subgroup while the other
two are contained in non-split Cartan subgroups; while if
$p\equiv3\pmod4$ then one~$C$ is non-split and the other two are
split.
\end{lemma}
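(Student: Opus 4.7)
The strategy is to encode each involution $g \in N$ by invariants in the group $\Fpstar/(\Fpstar)^2$ of square classes, and to exploit the product relation $g_1 g_2 = g_3$ inside $\PGL_2(\Fp)$. First I would show that any involution $g \in \PGL_2(\Fp)$ has a non-scalar lift $\tilde g \in \GL_2(\Fp)$ with $\tilde g^2$ scalar; applying Cayley--Hamilton (the minimal polynomial has degree $2$) forces $\Tr \tilde g = 0$ and $\tilde g^2 = -\det(\tilde g)\,I$. The eigenvalues $\pm\sqrt{-\det \tilde g}$ lie in $\Fp$ iff $-\det \tilde g$ is a square in $\Fpstar$, so $g$ is contained in a split Cartan iff $-\det \tilde g$ is a square, while $g \in \PSL_2(\Fp)$ iff $\det \tilde g$ is a square. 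Since rescaling $\tilde g$ by $\mu$ multiplies both determinants by $\mu^2$, the two square classes $t(g) \in \Fpstar/(\Fpstar)^2$ and $s(g) := -t(g)$ depend only on $g$.

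Next I would verify the geometric assertion. For each involution $g_i$, let $T_i \subset \GL_2(\Fp)$ be the (unique) Cartan subgroup containing any lift of $g_i$; it is just the centraliser of that lift. A direct computation with lifts shows the centraliser of $g_i$ in $\PGL_2(\Fp)$ equals the image of $N_{\GL_2}(T_i)$, so since $N$ is abelian the other two involutions lie in this image. They do not lie in the image of $T_i$ itself, because that image is cyclic of even order $p-1$ or $p+1$ and thus contains a unique involution, namely $g_i$. Hence each index-$2$ subgroup $C = \langle g_i \rangle$ is contained in a Cartan $T_i$, and $N$ is contained in its normaliser; this gives the first, geometric, assertion of both cases of the lemma.

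For the type distinction, the crucial identity is $g_3 = g_1 g_2$ in $\PGL_2(\Fp)$, which lifts to $\tilde g_3 = \lambda \tilde g_1 \tilde g_2$ for some $\lambda \in \Fpstar$ and yields $t(g_3) = t(g_1)\,t(g_2)$ in $\Fpstar/(\Fpstar)^2$. If $N \le \PSL_2(\Fp)$ then every $t(g_i)$ is trivial, so every $s(g_i)$ equals the class of $-1$; this class is trivial precisely when $p \equiv 1 \pmod 4$, so all three Cartans are split in that case and all three are non-split when $p \equiv 3 \pmod 4$. If $N \not\le \PSL_2(\Fp)$, the product relation together with the non-triviality of some $t(g_i)$ forces exactly one of $t(g_1), t(g_2), t(g_3)$ to be a square and the other two to be non-squares (the only solution of $t_3 = t_1 t_2$ with not all $t_i$ trivial). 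Combining with $s(g_i) = -t(g_i)$ and tracking the square class of $-1$ separately for $p \equiv 1$ and $p \equiv 3 \pmod 4$ then produces exactly the asymmetric split/non-split pattern stated in the lemma.

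The only real obstacle is the careful bookkeeping between the $\det$ and $-\det$ square classes, together with the sign twist introduced by the class of $-1$ toggling with $p \bmod 4$; once this invariant is set up correctly the whole argument reduces to elementary manipulation in the two-element group $\Fpstar/(\Fpstar)^2$.
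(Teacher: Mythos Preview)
Your proof is correct and follows essentially the same approach as the paper's: both arguments rest on lifting involutions to trace-zero matrices in $\GL_2(\Fp)$, observing that $\det$ modulo squares detects membership in $\PSL_2(\Fp)$ while $-\det$ modulo squares detects the split/non-split type, and then doing the $p\bmod 4$ casework. Your treatment is somewhat more explicit than the paper's---you spell out the product relation $t(g_3)=t(g_1)t(g_2)$ where the paper simply invokes $[N:N\cap\PSL_2(\Fp)]=2$, and you verify directly that $N$ lands in the normaliser of each Cartan via the centraliser computation, which the paper leaves implicit---but the underlying idea is the same.
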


\begin{proof}
  First note that in~$\PGL_2(\Fp)$, the condition of having zero
  trace is well-defined, and the determinant is also well-defined
  modulo squares.  Also, $\PSL_2(\Fp)$ is the subgroup of~$\PGL_2(\Fp)$ of elements with
  square determinant, which has index~$2$.  Hence either
  $N\le\PSL_2(\Fp)$, and all elements of~$N$ have square determinant,
  or $[N:N\cap\PSL_2(\Fp)]=2$, in which case exactly one of the
  elements of order~$2$ has square determinant.

  Examination of the characteristic polynomial shows that the elements
  of order~$2$ in~$\PGL_2(\Fp)$ are precisely those with trace zero,
  and these elements are split (having $2$ fixed points on
  $\PP^1(\Fp)$) if the determinant is minus a square, and non-split
  (having no fixed points) otherwise.  Hence when $p\equiv1\pmod4$,
  elements of order~$2$ are split if and only if they lie in
  $\PSL_2(\Fp)$, while for $p\equiv3\pmod4$ the reverse is the case.
  The result follows.
\end{proof}

The next theorem describes the symplectic type of congruences between
general quadratic twists.  Since the projective image is dihedral (by
Theorem~\ref{T:twist}), this is a situation where the local
methods from~\cite{FKSym} may not apply, as is illustrated by
Example~\ref{Ex:LocalFail7} below.  The first part of the theorem
(with $K=\Q$) is again already in~\cite[\S2]{Halberstadt-11nonsplit},
with essentially the same proof; we include it here in order to
include the second part, which describes a situation that cannot occur
over~$\Q$ except for very small primes. See Example~\ref{Ex:3twists}
below for an example with $p=3$.

\begin{theorem}\label{T:quadratic} 
Let $p$ be an odd prime. 
Let $E/K$ be an elliptic curve 
$p$-congruent to some quadratic twist~$E^d$.
Assume $E[p]$ is an absolutely irreducible~$G_K$-module, so that $j(E) \neq 0$ if $p=3$.

\begin{enumerate}

\item Let $C$ be the Cartan subgroup of~$\GL_2(\Fp)$
  associated to the extension $K(\sqrt{d})/K$ in
  Theorem~\ref{T:twist}.  Then the congruence is symplectic if and
  only if \emph{either} $C$ is split and $p \equiv 1 \pmod{4}$,
  \emph{or} $C$ is non-split and $p \equiv 3 \pmod{4}$.

\item When $\PP\rhobar_{E,p}\cong C_2\times C_2$, there are three
  different quadratic\footnote{when $j(E)=1728$ one of these
    is the quartic twist by~$-4$ so not in fact quadratic.}  twists
    of~$E$ which are $p$-congruent to~$E$.  If $\sqrt{p^*}\in K$ then
    all three congruences are symplectic.  Otherwise, one of the
    quadratic twists is by $K(\sqrt{p^*})$ and is symplectic while the
    other two are anti-symplectic.
\end{enumerate}
\end{theorem}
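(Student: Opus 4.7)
The plan is to exploit the cocycle relation for quadratic twists, where $\Psi(\sigma)=\eps_d(\sigma) I$ in~\eqref{E:PsiM}. Setting $H=\rhobar_{E,p}(G_K)$ and $H^+=\rhobar_{E,p}(G_{K(\sqrt d)})$, this turns~\eqref{E:PsiM} into
\[
A\,\rhobar_{E,p}(\sigma)\,A^{-1} = \eps_d(\sigma)\,\rhobar_{E,p}(\sigma),
\]
which says that $A$ centralizes $H^+$ (contained in a Cartan subgroup~$C$ by Theorem~\ref{T:twist}) and satisfies $Aw=-wA$ for every $w\in H\setminus H^+\subseteq N\setminus C$. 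Since $\PP H\cong D_n$ with $n\geq 2$, the projective image of $H^+$ is cyclic of order~$n\geq 2$, so $H^+$ contains a non-scalar element whose centralizer in $\GL_2(\Fp)$ is exactly $C$; hence $A\in C$. The symplectic type is read off from the square class of $\det A$, which is well-defined by condition~\textbf{(S)} via Proposition~\ref{P:conditionS}.

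To establish part~(1), I compute $\det A$ in each Cartan case. In the split case I take $C$ diagonal and $w$ anti-diagonal; the relation $Aw=-wA$ forces $A=\alpha\cdot\diag(1,-1)$, giving $\det A\equiv -1\pmod{(\Fpstar)^2}$. In the non-split case I identify $C\cong\F_{p^2}^*$ acting by multiplication on $\F_{p^2}=\Fp\oplus\Fp\sqrt{\varepsilon}$ (with $\varepsilon$ a non-square), and $w$ with Frobenius $x\mapsto x^p$; the relation becomes $\alpha+\alpha^p=0$, forcing $\alpha\in\sqrt{\varepsilon}\cdot\Fpstar$, whence $\det A = N_{\F_{p^2}/\Fp}(\alpha)\equiv -\varepsilon\pmod{(\Fpstar)^2}$. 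Since $-1$ is a square modulo~$p$ iff $p\equiv 1\pmod 4$, part~(1) follows: split requires $p\equiv 1\pmod 4$ for symplecticness, non-split requires $p\equiv 3\pmod 4$.

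For part~(2), I apply Lemma~\ref{L:C2xC2} in the situation $\PP H\cong C_2\times C_2$. The key observation is that the dichotomy $H\subseteq\PSL_2(\Fp)$ or not is governed by whether $\det\rhobar_{E,p}$ is square-valued; since $\det\rhobar_{E,p}=\chi_p$ and the kernel of $\chi_p\bmod(\Fpstar)^2$ cuts out the unique quadratic subfield $\Q(\sqrt{p^*})$ of $\Q(\zeta_p)$, we have $H\subseteq\PSL_2(\Fp)$ iff $\sqrt{p^*}\in K$. In this case Lemma~\ref{L:C2xC2} tells us all three index-$2$ subgroups share the same Cartan type (split if $p\equiv 1\pmod 4$, non-split if $p\equiv 3\pmod 4$), so by part~(1) all three associated congruences are symplectic. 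When $\sqrt{p^*}\notin K$, exactly one of the three subgroups is $H\cap\PSL_2(\Fp)$, which cuts out precisely the extension $K(\sqrt{p^*})$; by Lemma~\ref{L:C2xC2} this is the subgroup of whichever Cartan type makes the congruence symplectic via part~(1) (split when $p\equiv 1\pmod 4$, non-split otherwise), while the other two subgroups give anti-symplectic congruences.

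The main technical point is the non-split matrix calculation in part~(1); once this is settled, part~(2) amounts to tracking which of the three $C_2\times C_2$ subgroups is singled out by the determinant character. The elegant feature is that the distinguished quadratic extension is always $K(\sqrt{p^*})$, tying the answer directly to the quadratic subfield of $\Q(\zeta_p)$.
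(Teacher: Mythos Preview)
Your proof is correct and follows essentially the same route as the paper's: in part~(1) you identify $A$ as a trace-zero element of the Cartan subgroup~$C$ (so $\det A\equiv -\delta$ modulo squares with $\delta$ square iff $C$ is split), and part~(2) is identical to the paper's, invoking Lemma~\ref{L:C2xC2} together with the observation that $\PP\rhobar_{E,p}(G_K)\subseteq\PSL_2(\Fp)$ iff $\sqrt{p^*}\in K$. The only stylistic difference is that the paper \emph{constructs} $A$ as a lift of the unique order-$2$ element of $\PP(C)$ and then verifies~\eqref{E:PsiM}, whereas you start from the given~$A$ and compute its shape explicitly in the diagonal and $\F_{p^2}^*$ models; both arguments arrive at the same characterization of $\det A$.
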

\begin{proof} 

(1) We will define a matrix~$A\in\GL_2(\Fp)$ satisfying~\eqref{E:PsiM}
and $\det(A)=-\delta$,
where $\delta \in \Fpstar$ is a square if $C$ is split and a non-square
if $C$ is non-split.   
Then, the map $E[p]\to E'[p]$ corresponding to~$A$
is then a
$G_K$-equivariant isomorphism which, 
by the discussion following~\eqref{E:PsiM}, 
is symplectic if and only if
$-\delta$ is a square.  From this, 
part~(1) follows by considering the four cases:
$\delta$ square/non-square and $p\equiv\pm1\pmod4$.

Let $H=G_{K(\sqrt{d})}$ be the index~$2$ subgroup cut out by 
the homomorphism~$\varepsilon_d:G_K\to\{\pm 1 \}$.

The projective image $\PP \rhobar_{E,p}(H)$ is a subgroup of $\PP(C) \subset \PGL_2(\Fp)$ and the latter is cyclic of even order~$p\pm1$
(depending on whether~$C$ is split or non-split). Hence $\PP(C)$ contains
a unique element of order~$2$. 
Let $A \in C \subset \GL_2(\Fp)$ be any lift of this element.  Then $A$ is
not scalar, while $A^2$ is scalar, so by Cayley-Hamilton we have
$\Tr(A)=0$ and, 
by the proof of Lemma~\ref{L:C2xC2}, we have that
$-\det(A) = \delta$ is square if and only if the Cartan subgroup~$C$ is split.

Since $A$ is central in $\rhobar_{E,p}(G_K)$ modulo scalars, for all
$g\in\rhobar_{E,p}(G_K)$ we have $AgA^{-1}=\lambda(g)g$ with a scalar
$\lambda(g)=\pm I$ (comparing determinants).  Now $\lambda(g)=I$ if and
only if $g$ commutes with~$A$ which---since $A$ is a non-scalar element
of the Cartan subgroup---is if and only if $g$ is itself in the Cartan
subgroup, that is, if and only if $g=\rhobar_{E,p}(\sigma)$ with
$\sigma\in H$, so  \eqref{E:PsiM} holds.

(2) Since the determinant of $\rhobar_{E,p}$ is the mod~$p$ cyclotomic
character, we have $\PP\rhobar_{E,p}(G_K)\subseteq \PSL_2(\Fp)$ if
and only if $\sqrt{p^*}\in K$.  When this is the case, by
Lemma~\ref{L:C2xC2} we see that each index~$2$ subgroup of the
projective image is split when $p\equiv1\pmod4$ and each is non-split
when $p\equiv3\pmod4$.  By part (1), it follows in both cases that the
congruences between~$E$ and each of the three twists are all
symplectic.

Now suppose that $\sqrt{p^*}\notin K$.  By Lemma~\ref{L:C2xC2} again,
exactly one of the three subgroups is split when $p\equiv1\pmod4$, and
exactly one is non-split when $p\equiv3\pmod4$, so in both cases
exactly one congruence is symplectic.  Moreover, since the subgroup
$C$ inducing the symplectic congruence is the unique one contained in
$\PSL_2(\Fp)$, the associated quadratic extension is~$K(\sqrt{p^*})$.
\end{proof}

\begin{example} 
\label{Ex:LocalFail7}
Consider the elliptic curve
\[ E : y^2 + y = x^3 - x^2 - 74988699621831x +  238006866237979285299, \]
which has conductor $ N_E = 7^2 \cdot 2381 \cdot
134177^2>2\cdot10^{15}$, so is not in the LMFDB database.  This
example was found using the explicit parametrization of curves for
which the image of the mod~$7$ Galois representation is contained in
the normalizer of a non-split Cartan subgroup; we verified by explicit
computation that the mod~7 image is equal to the full normalizer and that the Cartan subgroup cuts out the field $\Q(\sqrt{d})$ where $d = -7 \cdot 134177$. 

Consider~$E^d$, the quadratic twist
of~$E$ by~$d$. 
From Theorem~\ref{T:twist} we have that $E[7] \simeq E^d[7]$ as $G_\Q$-modules and part (1) of Theorem~\ref{T:quadratic} yields that $E[7]$ and $E^d[7]$ are
symplectically isomorphic (and not anti-symplectically
isomorphic). We note that the same conclusion can be obtained via the general method from Section~\ref{S:statistics} 
and, more interestingly, none of the local criteria
in~\cite{FKSym} applies for this case.
\end{example}

\begin{example} \label{Ex:3twists}
  For an example with projective image~$C_2\times C_2$, let $E$ be the
  elliptic curve with label~\lmfdbec{6534}{a}{1}, of conductor
  $6534=2\cdot3^3\cdot11^2$.  

  The image of the mod~$3$ Galois representation is the normalizer of
  the split Cartan subgroup, which
  is projectively isomorphic to $D_2 = C_2\times C_2$.  The three quadratic
  subfields of the projective $3$-division field are $\Q(\sqrt{-3})$,
  $\Q(\sqrt{-11})$, and~$\Q(\sqrt{33})$; the corresponding quadratic
  twist of~$E$ are $E^{-3}=\lmfdbec{6534}{v}{1}$,
  $E^{-11}=\lmfdbec{6534}{p}{1}$, and $E^{33}=\lmfdbec{6534}{h}{1}$
  respectively.  All four curves have isomorphic mod~$3$
  representations by Theorem~\ref{T:twist}, the isomorphism being antisymplectic between~$E$
  and~$E^{-11}$ and~$E^{33}$, and symplectic between $E$ and~$E^{-3}$,
  in accordance with part (2) of
  Theorem~\ref{T:quadratic}.
\end{example}

\subsection{Congruences between higher order twists} \label{SS:higherCong}
In this section we will study, under condition~{\bf (S)}, the congruences between an elliptic curve~$E/K$ and its quartic, cubic or sextic twists by~$u \in K$.

Note that if $u = -s^2$ then $u = -4(s/2)^2$ and the quartic twist
by~$u$ is obtained by composing the quartic twist by $-4$ with a
quadratic twist; similarly, if $u = -3s^2$ then $u = -27(s/3)^2$ and
the sextic twist by~$u$ is obtained by composing the quadratic twist
by $-27$ with a cubic twist. Since the quartic twist by~$-4$ and the
quadratic twist by~$-27$ correspond to isogenies (see
Remark~\ref{R:2-3-isog}) their effect on the symplectic type is
known; observe also that both these cases are covered by the theory in
Sections~\ref{SS:dihedral} and~\ref{SS:typeQuadratic}.  In view of
this, we fix the following natural assumptions for this and the next
section.

Let $p$ be an odd prime. 
Let $E/K$ be an elliptic curve 
with $j(E) = 0, 1728$ and let~$u \in K^*$.
\begin{itemize}
 \item If $j(E) = 1728$, assume also $K'=K(\sqrt{-1}) \neq K$, 
 $u \neq -1$ modulo squares;
 \item If $j(E) = 0$, assume also $K'=K(\sqrt{-3}) \neq K$, 
 $u \neq -3$ modulo squares and
$p\not=3$. 
\end{itemize}

\begin{lemma} \label{L:diagonalC}
Let~$E/K$ be as above. 
\begin{enumerate}
  \item
The projective image $\PP\rhobar_{E,p}(G_K)$ is dihedral $D_n$ for
some $n\ge2$, and the projective image of $G_{K'}$ is $C_n$.
\item
After extending scalars from~$\Fp$ to~$\F_{p^2}$ if necessary, there
is a basis for~$E[p]$ with respect to which for $\sigma\in G_{K'}$ we
have
\begin{equation}
 \label{E:diagonalC}
\rhobar_{E,p}(\sigma) = c(\sigma)\cdot\diag(1,\eps(\sigma))
\end{equation}
with $c(\sigma)\in\overline{\F}_p^*$ and
$\eps:G_{K'}\to\overline{\F}_p^*$ a character of exact order~$n$.
\end{enumerate}
\end{lemma}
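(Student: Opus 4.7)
The plan is to exploit the CM structure of~$E$: since $j(E) \in \{0, 1728\}$, the curve has CM by~$\calO = \Z[i]$ or~$\Z[\zeta_3]$, and the associated CM endomorphisms are defined over~$K'$ but not over~$K$.

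For part~(1), I would fix a generator~$\alpha$ of~$\calO^*$, a fourth or sixth root of unity. Since $\alpha$ is an endomorphism of~$E$ defined over~$K'$, the reduction $\alpha\pmod p$ acting on~$E[p]$ commutes with $\rhobar_{E,p}(\sigma)$ for every $\sigma\in G_{K'}$. Under our hypotheses ($p$ odd, and $p\neq3$ when $j(E)=0$), this reduction is non-scalar (as one sees from the explicit description of $\calO/p\calO$, which is either $\Fp\times\Fp$ or~$\F_{p^2}$), so its centraliser in~$\GL_2(\Fp)$ is a Cartan subgroup~$C$: split when $p$ splits in~$\calO$, non-split otherwise. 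Hence $\rhobar_{E,p}(G_{K'})\subseteq C$. For any $\tau\in G_K\setminus G_{K'}$, the relation $\tau\alpha\tau^{-1}=\bar\alpha$ (complex conjugation in~$\calO$) shows that $\rhobar_{E,p}(\tau)$ normalises~$C$ and, since $\alpha\neq\bar\alpha\pmod p$, does not centralise~$\alpha$; thus it lies in $N\setminus C$, where $N$ is the normaliser of~$C$. Consequently $\rhobar_{E,p}(G_K)\subseteq N$ with $\rhobar_{E,p}(G_K)\not\subseteq C$, and projectively we obtain a dihedral group $D_n$ with $C_n=\PP\rhobar_{E,p}(G_{K'})$.

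To see $n\geq 2$, I would first observe that with image in~$N$ but not in~$C$, the representation is absolutely irreducible: in the split case, non-Cartan elements exchange the two eigenlines of~$C$, while in the non-split case $C$ already has no $\Fp$-rational eigenlines. So case~(B) of Proposition~\ref{P:conditionS} cannot apply, and \condS\ must hold via case~(A): the image is non-abelian. But were $n=1$, the Cartan part of the image would consist only of scalars, and the full image would take the form $\{\lambda I\}\cup\{\lambda w\}$ for some fixed $w\in N\setminus C$, an abelian group---a contradiction. Hence $n\geq 2$.

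For part~(2), since the order of~$\alpha$ divides~$6$, the matrix $\alpha\pmod p$ is diagonalisable over~$\F_{p^2}$ (already over~$\Fp$ when $C$ is split). I would fix a basis of $E[p]\otimes\F_{p^2}$ consisting of $\alpha$-eigenvectors; in this basis the elements of~$C$ are diagonal. For $\sigma\in G_{K'}$, write $\rhobar_{E,p}(\sigma)=\diag(a(\sigma),b(\sigma))$ with $a,b:G_{K'}\to\overline{\F}_p^*$ characters, and set $c(\sigma)=a(\sigma)$, $\eps(\sigma)=b(\sigma)/a(\sigma)$, so that $\rhobar_{E,p}(\sigma)=c(\sigma)\cdot\diag(1,\eps(\sigma))$. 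Both~$c$ and~$\eps$ are then multiplicative; moreover $\ker\eps$ is precisely the set of~$\sigma$ for which $\rhobar_{E,p}(\sigma)$ is scalar, so the image of~$\eps$ has the same cardinality as the projective image~$C_n$ of~$G_{K'}$, proving that $\eps$ has exact order~$n$. The main subtlety is verifying $n\geq 2$: this step is not formal, and relies on combining \condS\ with the incompatibility of reducibility with image in~$N\setminus C$ to force non-abelianness.
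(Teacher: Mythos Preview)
Your approach is essentially that of the paper, which disposes of the lemma in one sentence by appeal to ``standard facts on CM curves plus our running assumptions'' for~(1) and ``diagonalising the Cartan subgroup'' for~(2); you have simply supplied those details.  There is, however, a circularity in your argument for $n\ge2$.  You claim that having image in~$N$ but not in~$C$ forces absolute irreducibility, on the grounds that non-Cartan elements exchange the two eigenlines of~$C$.  But that only shows a putative invariant line cannot be a $C$-eigenline; it does not show that an invariant line \emph{must} be one.  The latter would follow if the image contained a non-scalar element of~$C$, which is exactly the condition $n\ge2$ you are trying to establish.  Indeed, in the $n=1$ scenario you describe, with image $\{\lambda I\}\cup\{\lambda w\}$ for a fixed $w\in N\setminus C$, the two eigenlines of~$w$ over~$\Fbar_p$ are visibly invariant under the whole image, so the representation is absolutely reducible.  (Your non-split clause has the same issue: that $C$ has no $\Fp$-rational eigenline helps only if the image meets~$C$ non-trivially modulo scalars.)

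The repair is short.  Case~(B) of Proposition~\ref{P:conditionS} forces the image to contain an element of order~$p$: a matrix $\mat{a}{b}{0}{a}$ with $b\ne0$, raised to the multiplicative order of~$a$, is a nontrivial unipotent.  But $|N|$ equals $2(p-1)^2$ or $2(p^2-1)$, both coprime to~$p$, so (B) is excluded outright.  Under \condS\ we are then in case~(A), the image is non-abelian, and your final paragraph correctly derives a contradiction from $n=1$.  Alternatively, closer to your original line: in case~(B) the invariant line is \emph{unique}, and since your endomorphism~$\alpha$ commutes with the image, that line is $\alpha$-stable, hence an $\alpha$-eigenline---but you have already shown that $\rhobar_{E,p}(\tau)$ for $\tau\notin G_{K'}$ swaps the two $\alpha$-eigenlines.
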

\begin{proof}
Part (1) follows from standard facts on~CM curves plus our running
assumptions and~(2) follows by diagonalising the Cartan subgroup over $\Fp$ in
  the split case and over~$\F_{p^2}$ in the non-split case.
\end{proof}

\begin{lemma}\label{L:DiagonalPsi}
Let~$E/K$ and~$u\in K^*$ be as above.  For $m\in\{3,4,6\}$, let $\psi$
be the order~$m$ cocycle associated to~$u$, with values
in~$\Aut(E)\cong\calO^*$ where $\calO\cong\Z[\zeta_m]$.

After extending scalars to~$\Fbar_p$ and changing basis so that
\eqref{E:diagonalC} holds, the matrices $\Psi(\sigma)$
in~\eqref{E:PsiMatrix} become diagonal.  More precisely,
\[
\Psi(\sigma) = \diag(\eta(\sigma),\eta(\sigma)^{-1})
\]
where $\eta:G_K\to \overline{\F}_p^*$ is a map whose restriction to
$G_{K'}$ is a homomorphism of order exactly~$m$.
\end{lemma}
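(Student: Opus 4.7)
The plan is to exploit the fact that, although $E$ has no CM defined over~$K$, the ring $\calO=\End(E)$ is in fact defined over the auxiliary field $K'$ by construction, since $K'$ contains the CM field $\Q(i)$ or $\Q(\sqrt{-3})$. Consequently every endomorphism of~$E$ commutes with the $G_{K'}$-action on $E[p]$. Because $\Psi(\sigma)$ is the matrix (with respect to the chosen basis) of the automorphism $\psi(\sigma)\in\calO^*$ acting on~$E[p]$, this commutativity translates into the identity
\[
\Psi(\sigma)\,\rhobar_{E,p}(\tau)=\rhobar_{E,p}(\tau)\,\Psi(\sigma)
\qquad\text{for all }\sigma\in G_K,\ \tau\in G_{K'}.
\]

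The next step is to use the diagonal basis supplied by Lemma~\ref{L:diagonalC}. Since $\eps$ has order $n\ge 2$, there is some $\tau\in G_{K'}$ for which $\rhobar_{E,p}(\tau)$ has two distinct eigenvalues $c(\tau)$ and $c(\tau)\eps(\tau)$; the centralizer of such a matrix in $M_2(\overline{\F}_p)$ consists only of diagonal matrices. Hence $\Psi(\sigma)$ is itself diagonal, and combining this with $\det\Psi(\sigma)=1$ (as remarked just after~\eqref{E:PsiMatrix}) yields $\Psi(\sigma)=\diag(\eta(\sigma),\eta(\sigma)^{-1})$ for some function $\eta\colon G_K\to\overline{\F}_p^*$. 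The fact that $\Psi|_{G_{K'}}$ is already a homomorphism (noted just after~\eqref{E:PsiMatrix}, and arising because $G_{K'}$ fixes $\mu_m\subset\calO^*$ so that the cocycle identity collapses to multiplicativity) then transfers at once to $\eta|_{G_{K'}}$.

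The remaining task is to verify that $\eta|_{G_{K'}}$ has order exactly~$m$. Because $p$ is odd and $p\ne 3$ when $j(E)=0$, we have $p\nmid m$, so reduction induces an injection $\mu_m\hookrightarrow\overline{\F}_p^*$; it therefore suffices to prove that $\psi|_{G_{K'}}\colon G_{K'}\to\mu_m$ itself has order~$m$. Writing $v$ for a chosen $m$-th root of~$u$ in~$\overline{K}$, the kernel of $\psi|_{G_{K'}}$ is $G_{K'(v)}$, so the question reduces to showing $[K'(v):K']=m$, equivalently $K'\cap K(v)=K$. For $m=3$ this is automatic since $[K(v):K]=3$ is coprime to $[K':K]=2$. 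For $m=4$ and $m=6$ the unique quadratic subfield of $K(v)/K$ is $K(\sqrt u)$, and this coincides with $K'$ precisely when $u\equiv -1$ or $u\equiv -3$ modulo squares in~$K$, respectively; both cases are excluded by the standing hypotheses on~$u$. The main obstacle is this last order computation, which rests delicately on the excluded special values of~$u$; once it is in hand, the lemma follows.
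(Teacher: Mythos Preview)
Your proof is correct and follows essentially the same line as the paper's: both argue that the endomorphisms of~$E$ are defined over~$K'$, hence their matrices commute with the non-scalar diagonal action of~$G_{K'}$ and are therefore themselves diagonal of determinant~$1$; and both reduce the order-$m$ statement to the corresponding statement for~$\psi|_{G_{K'}}$ via the injectivity of the reduction $\mu_m\hookrightarrow\overline{\F}_p^*$. The paper handles the last point by a bare reference to the case list in \S\ref{SS:GalRepTwist} together with the exclusion of the special quartic twist, whereas you supply an explicit Kummer-theoretic argument.

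One point to tighten in your $m=4$ case: your assertion that $K(\sqrt{u})$ is the \emph{unique} quadratic subfield of $K(v)/K$ is precisely equivalent to $i\notin K(v)$, i.e.\ to $K'\cap K(v)=K$, which is the very conclusion you are aiming for (if $K(v)/K$ had a second quadratic subfield it would be biquadratic, forcing $i\in K(v)$). A non-circular route is to work directly over~$K'$: one has $K^*\cap(K')^{*2}=K^{*2}\cup(-K^{*2})$, and both alternatives are excluded (the first because the twist has order~$4$, the second by the standing hypothesis $u\not\equiv-1$ modulo squares), so $u\notin(K')^{*2}$ and hence $[K'(v):K']=4$ by Kummer theory in~$K'$. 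The analogous issue does not arise for $m=6$, since a degree-$6$ extension cannot contain two distinct quadratic subfields.
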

\begin{proof}
Recall that we have fixed an isomorphism 
$\End(E)\cong\calO$ and
that $\rhobar_{E,p}(G_{K'}) \subset C$ for some Cartan subgroup~$C \subset \GL_2(\Fp)$.
Hence $\calO$ acts on $E[p]$ 
via matrices which are in~$C$ since the endomorphisms
are all defined over~$K'$ and so their action commutes with that
of~$G_{K'}$, and the action of~$G_{K'}$ is not scalar. Note that the basis giving~\eqref{E:diagonalC} diagonalizes the whole of~$C$, therefore~$\Psi(\sigma)$ is diagonal. Finally,  
an endomorphism~$\alpha$ acts via a matrix of determinant~$\deg(\alpha)$ so $\calO^*$ acts via diagonal matrices of determinant~$1$ as stated.

The map~$\eta$ is obtained as follows.  In the split case
$p\calO=\frp\overline{\frp}$ and $\eta(\sigma)$ is the image of
$\psi(\sigma)$ under the isomorphism
$\End(E)\cong\calO$ followed by the reduction
$\calO\to\calO/\frp\cong\Fp$ which induces a reduction homomorphism
$\eta:\calO^*\to\Fpstar$.  Interchanging~$\frp$ and~$\overline{\frp}$
has the effect of replacing~$\eta$ by its inverse; we make an
arbitrary but fixed choice.

In the non-split case, $p\calO=\frp$ and 
$\eta(\sigma)$ is the image
of $\psi(\sigma)$ under the isomorphism followed by the reduction
$\calO\to\calO/\frp\cong\F_{p^2}$. 
This induces a 
homomorphism $\eta: \calO^*\to\F_{p^2}^*$ for which there are two
choices since we may compose with the nontrivial
automorphism of~$\F_{p^2}$.

By definition, the map~$\eta$ becomes a homomorphism of order~$m$ when restricted to~$G_{K'}$ because this is the case for~$\psi$ by cases (ii) and (iii) 
of Section~\ref{SS:GalRepTwist}, given that we have excluded the special quartic twist by~$u=-4$.
\end{proof}

\begin{lemma}\label{L:EpsEta}
  Keeping the notation of Lemmas~\ref{L:diagonalC}
  and~\ref{L:DiagonalPsi}, let $E'$ be the order~$m$ twist of~$E$
  by~$u$ with~$m \in \{3,4,6\}$, and suppose also that $E$ and $E'$
  are $p$-congruent.  Then $\eps(\sigma)=\eta(\sigma)$ for all
  $\sigma\in G_{K'}$, and $\PP \rhobar_{E,p}(G_K) \simeq D_n$ where
  $n=m$.
\end{lemma}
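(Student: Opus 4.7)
The plan is to combine the diagonal forms provided by Lemmas~\ref{L:diagonalC} and~\ref{L:DiagonalPsi} with the rigidity of conjugation between diagonal matrices in $\GL_2$. The $p$-congruence hypothesis furnishes a matrix $A \in \GL_2(\Fbar_p)$ (after the scalar extension used in the preceding lemmas) with $A\,\rhobar_{E,p}(\sigma)\,A^{-1} = \rhobar_{E',p}(\sigma)$ for every $\sigma \in G_K$. Substituting \eqref{E:PsiMatrix} and the two lemmas into this identity for $\sigma \in G_{K'}$, the scalar factor $c(\sigma)$ cancels on both sides, leaving the purely diagonal identity
\[
A\cdot\diag(1,\eps(\sigma))\cdot A^{-1} \;=\; \diag\bigl(\eta(\sigma),\,\eta(\sigma)^{-1}\eps(\sigma)\bigr).
\]

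Next I would show that $A$ lies in the normaliser of the diagonal maximal torus $T \subset \GL_2(\Fbar_p)$. Since $\eps$ has order $n \geq 2$, there exists $\sigma_0 \in G_{K'}$ with $\eps(\sigma_0) \neq 1$; the matrix $\diag(1,\eps(\sigma_0))$ is then a non-scalar diagonal matrix whose $A$-conjugate is again diagonal, and this forces $Ae_1, Ae_2$ to be aligned with $e_1, e_2$ (in some order). Hence $A \in T \cup Tw$, where $w = \left(\begin{smallmatrix} 0 & 1 \\ 1 & 0 \end{smallmatrix}\right)$ is the Weyl element.

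I then split into the two resulting cases, each amounting to an entry-by-entry comparison in the above identity. In the case $A \in T$, conjugation acts trivially and the comparison gives $\eta(\sigma) = 1$ for every $\sigma \in G_{K'}$; this contradicts the assertion in Lemma~\ref{L:DiagonalPsi} that $\eta|_{G_{K'}}$ has exact order $m \in \{3,4,6\}$, so this case is excluded. In the remaining case $A \in Tw$, conjugation swaps the two diagonal entries, and the comparison yields $\eta(\sigma) = \eps(\sigma)$ for all $\sigma \in G_{K'}$, which is the first assertion of the lemma. The equality $n=m$ then follows at once, since $\eps|_{G_{K'}}$ has order $n$ while $\eta|_{G_{K'}}$ has order $m$, so $\PP\rhobar_{E,p}(G_K) \simeq D_n = D_m$.

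The main subtlety is not any individual calculation but the global coherence of $A$: the matrix $A$ does not depend on $\sigma$, so the dichotomy $A \in T$ versus $A \in Tw$ is decided once and for all by a single $\sigma_0$, and the resulting conclusion about $\eta$ then propagates uniformly to every $\sigma \in G_{K'}$. Verifying that the dichotomy is forced on the level of a single well-chosen $\sigma_0$ (rather than only $\sigma$-by-$\sigma$) is the step that requires the most care.
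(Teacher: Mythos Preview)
Your proof is correct and follows essentially the same approach as the paper's own argument: derive the diagonal identity from \eqref{E:PsiM} after cancelling~$c(\sigma)$, observe that $A$ must be diagonal or anti-diagonal since it conjugates a non-scalar diagonal matrix to a diagonal one, rule out the diagonal case because it forces $\eta|_{G_{K'}}=1$, and read off $\eps=\eta$ (hence $n=m$) from the anti-diagonal case. Your phrasing in terms of the torus normaliser and the explicit attention to the $\sigma$-independence of $A$ are slightly more careful than the paper, but the substance is identical.
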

\begin{proof}
  Let $A\in\GL_2(\Fp)$ be the matrix of an isomorphism~$E[p]\to
  E'[p]$.  For $\sigma\in G_{K'}$ we have, from \eqref{E:PsiM} and
  after cancelling the scalar factor~$c(\sigma)$ from each side,
\[
A\ \diag(1,\eps(\sigma))\ A^{-1} = \diag(1,\eps(\sigma))\; \diag(\eta(\sigma),\eta(\sigma)^{-1}).
\]
Now since $A$ conjugates a non-scalar diagonal matrix into another
diagonal matrix, it is either itself diagonal or is anti-diagonal.
But $A$ cannot be diagonal since that would imply $\eta(\sigma)=1$ for
all $\sigma\in G_{K'}$ (which is not the case because we have excluded the special quartic twist), so $A$ is anti-diagonal.  Therefore, conjugating any
diagonal matrix by~$A$ just interchanges the two diagonal entries, so
the previous equation becomes
\[
\diag(\eps(\sigma),1) = \diag(1,\eps(\sigma))\; \diag(\eta(\sigma),\eta(\sigma)^{-1}).
\]
Hence $\eps(\sigma)=\eta(\sigma)$ for all $\sigma\in G_{K'}$, thus $n=m$ and the
statement of the lemma follows.
\end{proof}

The preceding lemma shows that, for $n\in\{3,4,6\}$, a necessary
condition for the existence of a $p$-congruence between~$E$ with a
twist~$E'$ of order~$n$ is that the projective mod~$p$ image is
isomorphic to $D_n$.  We next show that this condition is also
sufficient.  First we have an elementary lemma.

\begin{lemma}
Let $n\in\{3,4,6\}$, and let $K$ be a number field not containing the
$n$th roots of unity.  Let $F/K$ be a Galois extension with
$\Gal(F/K)\cong D_n$, and assume that the subfield $K'$ of $K$ fixed
by the unique cyclic subgroup of order~$n$ is $K'=K(\sqrt{-1})$ if
$n=4$ and $K'=K(\sqrt{-3})$ if $n=3,6$.

Then there exists $u\in K$ such that $F$ is the splitting field of~$X^n-u$.
\end{lemma}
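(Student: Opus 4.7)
The plan is to apply Kummer theory to the cyclic extension $F/K'$ and then descend the chosen radical to $K$ using Hilbert 90. First I would verify that in each of the three cases, $K'$ contains a primitive $n$-th root of unity: for $n=4$, $K'=K(\sqrt{-1})$ already contains $\zeta_4$; for $n=3$, $K'=K(\sqrt{-3})$ contains $\zeta_3$; and for $n=6$, $K'=K(\sqrt{-3})$ contains both $\zeta_3$ and $-1$, hence $\zeta_6$. In particular $K'=K(\zeta_n)$, and the nontrivial element of $\Gal(K'/K)$ sends $\zeta_n$ to $\zeta_n^{-1}$ (this is the only nontrivial automorphism, since $K$ itself does not contain $\zeta_n$).

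Next, since $F/K'$ is cyclic of degree $n$ and $\zeta_n\in K'$, Kummer theory gives a generator $\alpha\in F$ with $\alpha^n=v\in K'^*$ and $F=K'(\alpha)$. Fix a generator $\sigma$ of $\Gal(F/K')$ with $\sigma(\alpha)=\zeta_n\alpha$, and pick an element $\tau\in\Gal(F/K)$ of order $2$ outside the cyclic subgroup, so that $\sigma\tau=\tau\sigma^{-1}$. Using this dihedral relation together with $\tau(\zeta_n)=\zeta_n^{-1}$, a short computation gives
\[
\sigma(\tau(\alpha))=\tau(\sigma^{-1}(\alpha))=\tau(\zeta_n^{-1}\alpha)=\zeta_n\,\tau(\alpha),
\]
so $\tau(\alpha)$ lies in the $\zeta_n$-eigenspace of $\sigma$ on $F$, which is the one-dimensional $K'$-space spanned by $\alpha$. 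Therefore $\tau(\alpha)=c\alpha$ for a unique $c\in K'^*$.

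The second key step is to exploit $\tau^2=1$: applying $\tau$ to $\tau(\alpha)=c\alpha$ gives $\alpha=\tau(c)c\,\alpha$, hence $N_{K'/K}(c)=c\cdot\tau(c)=1$. By Hilbert 90, there exists $d\in K'^*$ with $c=\tau(d)/d$. Setting $\beta=d^{-1}\alpha$, a direct check shows $\tau(\beta)=\beta$ and $\sigma(\beta)=\zeta_n\beta$; consequently $\beta^n$ is fixed by both $\sigma$ and $\tau$, hence lies in $K$. Taking $u=\beta^n\in K^*$, we have $F=K'(\beta)=K(\beta,\zeta_n)$, which is exactly the splitting field of $X^n-u$ over $K$ (note $\beta\notin K'$, else $F=K'$, contradicting $[F:K]=2n>2$).

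I do not anticipate a real obstacle: the whole proof reduces to standard Kummer/Hilbert 90 machinery once one isolates the correct eigenspace for $\sigma$ to see that $\tau$ acts on $\alpha$ by a scalar. The only place requiring care is the verification that $\tau(\zeta_n)=\zeta_n^{-1}$, which is why the hypothesis $K'=K(\zeta_n)$ (forced by the assumptions on $-1$ and $-3$) is essential; without it the cocycle $c$ need not satisfy $N_{K'/K}(c)=1$ and the descent would break down.
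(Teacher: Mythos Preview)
Your proof is correct and follows essentially the same route as the paper: apply Kummer theory to the cyclic extension $F/K'$, use the dihedral relation to show that $\tau(\alpha)/\alpha$ lies in $K'^*$ with norm~$1$ down to $K$, and then adjust the radical so that its $n$th power lands in $K$. The only cosmetic difference is that you invoke Hilbert~90 abstractly to produce the element $d$ with $c=\tau(d)/d$, whereas the paper writes down the explicit solution $d=1+v$ (with $v=c$ in your notation) coming from the identity $v(1+\bar v)=1+v$ when $v\bar v=1$.
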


\def\rnu{\root n\of u} \def\rnub{\root n\of {\overline{u}}}

\begin{proof}\label{L:Dn}
Write $\Gal(F/K)=\left<\sigma,\tau\mid\sigma^n=\tau^2=1,
\tau\sigma\tau=\sigma^{-1}\right>$.  The fixed field of $\sigma$
is~$K'$.  Since $K'$ contains the $n$th roots of unity, by Kummer
Theory, $F=K'(\rnu)$ for some $u\in K'$.  Now $\sigma(\rnu)=\zeta\rnu$
with $\zeta$ a primitive $n$th root of unity, and either $\tau(u)=u$
or $\tau(u)=\overline{u}$ (the $K'/K$-conjugate of~$u$).  In the first
case, $u\in K$ and the result follows.

Suppose that $\tau(u)=\overline{u}\not=u$.  Since
$(\tau(\rnu))^n=\tau(u)=\overline{u}$, we may set $\rnub=\tau(\rnu)$.
Using $\sigma=\tau\sigma^{-1}\tau$ we find that
$\sigma(\rnub)=\zeta\rnub$.  Hence $v=\rnub/\rnu\in K'$, so
$\rnub=\rnu v$;  applying~$\tau$ gives
\[
\rnu = \rnub\;\overline{v} = \rnu v\overline{v},
\]
so $v\overline{v}=1$.  Then $v(1+\overline{v})=1+v$ and it follows
that $u_1=u(1+v)^n\in K$.  Replacing $u$ by $u_1$ completes the
argument.
\end{proof}

\begin{theorem} \label{T:higherTwists}
Let $E/K$ and $p$ be as above. Suppose that $\PP \rhobar_{E,p}(G_K) \simeq D_n$ where $n=4$ if
$j(E)=1728$ and $n\in\{3,6\}$ if $j(E)=0$.
Then $E$ is $p$-congruent to an $n$-twist.  

Moreover, for $n=3$ there is a
unique such twist, while for $n=4$ there are two which are
$2$-isogenous to each other and for $n=6$ there are two which are
$3$-isogenous to each other.
\end{theorem}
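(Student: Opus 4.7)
The plan is to locate a candidate twist using the dihedral structure, verify the $p$-congruence by trace equality, and count solutions by Kummer theory.

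After the diagonalisation of Lemma~\ref{L:diagonalC}, for $\sigma \in G_{K'}$ one has $\rhobar_{E,p}(\sigma) = c(\sigma)\,\diag(1,\eps(\sigma))$ with $\eps$ of exact order~$n$. Let $F \subset \Kbar$ be the fixed field of $\ker(\eps)$; this is a cyclic extension of $K'$ of degree~$n$, and $F/K$ is $D_n$-Galois since the projective kernel of $\rhobar_{E,p}$ coincides with $\ker(\eps)$. By Lemma~\ref{L:Dn} there exists $u \in K^*$ with $F = K'(\sqrt[n]{u})$. Consider the $n$-twist $E_u$. Lemma~\ref{L:DiagonalPsi} gives $\Psi(\sigma) = \diag(\eta(\sigma), \eta(\sigma)^{-1})$ where $\eta|_{G_{K'}}$ is the Kummer character of~$u$; its kernel equals $\ker(\eps)$, so $\eta|_{G_{K'}} = \eps^k$ for some $k$ coprime to~$n$. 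Since $\eta_{u^m} = \eta_u^m$, replacing $u$ by $u^{k'}$ for $k'$ with $kk' \equiv 1 \pmod{n}$---which still lies in $K^*$ and gives an $n$-twist with splitting field $F$---we may assume $\eta = \eps$ on $G_{K'}$.

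I would then show that $\rhobar_{E,p} \simeq \rhobar_{E_u,p}$ using Brauer--Nesbitt together with absolute irreducibility (immediate since $D_n$ is non-abelian for $n \geq 3$). For $\sigma \in G_{K'}$, matrix multiplication gives $\Tr\rhobar_{E_u,p}(\sigma) = c(\sigma)\,(\eta(\sigma) + \eps(\sigma)\,\eta(\sigma)^{-1})$, and the identity $(\eta - 1)(\eta - \eps) = 0$ (which is exactly the assertion $\eta = \eps$ on the image of $G_{K'}$) forces this to equal $c(\sigma)\,(1 + \eps(\sigma)) = \Tr \rhobar_{E,p}(\sigma)$. For $\sigma \notin G_{K'}$, the matrix $\rhobar_{E,p}(\sigma)$ lies in the non-trivial coset of the diagonalised Cartan and is therefore antidiagonal with trace zero; its product with the diagonal $\Psi(\sigma)$ remains antidiagonal. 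Hence traces agree on all of $G_K$, establishing existence.

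For uniqueness and counting, note that the $u \in K^*/(K^*)^n$ yielding a $p$-congruent $n$-twist form a fibre of the natural map $K^*/(K^*)^n \to K'^*/(K'^*)^n$, hence a coset of the kernel. I would compute this kernel by Galois descent: any $v \in K'^* \setminus K^*$ with $v^n \in K^*$ satisfies $c(v)/v = \zeta \in \mu_n \setminus \{1\}$ (where $c$ generates $\Gal(K'/K)$), and an explicit calculation case-by-case in~$\zeta$ identifies $v^n$ modulo~$(K^*)^n$: the kernel is trivial for $n=3$, equals $\{1,-4\}$ for $n=4$ (witnessed by $v = 1+\sqrt{-1}$, giving $v^4 = -4$), and $\{1,-27\}$ for $n=6$ (witnessed by $v = \sqrt{-3}$, giving $v^6 = -27$). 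Moreover, by Remark~\ref{R:2-3-isog}, the twist parameter $-4$ (resp.~$-27$) corresponds precisely to the $2$-isogeny (resp.~$3$-isogeny), so for $n=4$ and $n=6$ the two congruent twists are $2$-isogenous (resp.~$3$-isogenous), as claimed. The main obstacle is this descent computation, although it reduces to elementary $\mu_n$-arithmetic.
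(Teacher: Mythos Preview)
Your argument is correct and takes a genuinely different route from the paper's.  Where the paper constructs an explicit intertwining matrix~$A$ satisfying~\eqref{E:PsiM} (taking $A=\rhobar_{E,p}(\tau)\diag(w,1)$ for a chosen $\tau\in G_K\setminus G_{K'}$ and then verifying~\eqref{E:PsiM} on~$G_{K'}$ and at~$\tau$), you instead compare traces and invoke Brauer--Nesbitt.  Your trace computation is clean: once $\eta=\eps$ on~$G_{K'}$, both traces equal $c(\sigma)(1+\eps(\sigma))$ there, and on the nontrivial coset both matrices are antidiagonal.  The only soft spot is the justification of absolute irreducibility: ``$D_n$ non-abelian'' is not quite enough, since the image of a Borel in~$\PGL_2(\Fp)$ is also non-abelian.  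The correct observation is that after diagonalising the Cartan, the image contains non-scalar diagonal matrices (eigenvectors $e_1,e_2$) and antidiagonal matrices (which swap $e_1,e_2$), so there is no common eigenline over~$\Fbar_p$; alternatively, $D_n$ with $n\in\{3,4,6\}$ has order coprime to~$p$ and is non-cyclic, so cannot embed in $\Fp\rtimes\Fpstar$.

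Your counting via the kernel of $K^*/(K^*)^n\to K'^*/(K'^*)^n$ matches the paper's, which phrases it instead as ``$u$ is unique up to inverse and multiplication by~$K^*\cap(K'^*)^n$, and only one of each inverse pair works''.  Note that you implicitly use both directions: Lemma~\ref{L:EpsEta} for necessity (congruent $\Rightarrow$ $\eta_u=\eps$, hence $u$ lies in a fixed fibre), and your trace argument for sufficiency (any $u$ in that fibre works, not just the one you constructed).  The paper's explicit~$A$ has one advantage your approach lacks: its determinant is used directly in the proof of Theorem~\ref{T:typeHigher} to read off the symplectic type, whereas Brauer--Nesbitt gives existence of~$A$ without pinning down $\det A$ modulo squares.
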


\begin{proof} 
The projective $p$-division field of~$E$ is the Galois extension~$F/K$
fixed by $\PP \rhobar_{E,p}$ with $\Gal(F/K) \simeq D_n$. By
Lemma~\ref{L:Dn} we can write it as $F=K'(\root n\of u)$ for some
$u\in K^*$, which is unique up to replacing $u$ by~$u^{-1}$ and
multiplication by an element of~$K^*\cap(K'^*)^n$.  (Here we use that
fact that~$\Aut(C_n)\cong\{\pm1\}$ for $n\in\{3,4,6\}$.)

Keeping the notations from the previous lemmas, to prove the first
part of the theorem we will construct a matrix~$A\in\GL_2(\Fp)$
giving the $G_K$-isomorphism $E[p]\to E'[p]$.

Indeed, the character $\eps:G_{K'} \to
\overline{\F}_p^*$ has exact order~$n$ and cuts out the extension
$F/K'$.
Now let~$\psi:G_K\to \mu_n\subseteq\calO^*$ be the cocycle associated
to~$u$, namely
\[
\sigma\mapsto \psi(\sigma) = \sigma(\root n\of u)/\root n\of u.
\]
The restriction of~$\psi$ to~$G_{K'}$ is a character of order~$n$
which cuts out the same extension $F/K'$, so it is either~$\eps$ or~$\eps^{-1}$.  Replacing $u$ by $u^{-1}$ if necessary, we may assume that
$\left.\psi\right|_{G_{K'}} = \eps$.

Now let $E'$ be the order~$n$ twist of~$E$ by~$u$.  As before, we have
$\Psi(\sigma)=\diag(\eta(\sigma),\eta^{-1}(\sigma))$ where
$\left.\eta\right|_{G_{K'}} = \left.\psi\right|_{G_{K'}} = \eps$ by Lemma~\ref{L:EpsEta}.
Thus, for $\sigma\in G_{K'}$, we have
\[
\Psi(\sigma) = \diag(\eps(\sigma),\eps(\sigma)^{-1}),
\]
and so, the same computation as in the proof of Lemma~\ref{L:EpsEta} 
gives that, for any anti-diagonal matrix~$A$
and all $\sigma\in G_{K'}$, we have
\[
A\ \diag(1,\eps(\sigma))\ A^{-1} = \diag(\eps(\sigma),1) =
\diag(1,\eps(\sigma)) \Psi(\sigma),
\]
that is~\eqref{E:PsiM} holds for $\sigma\in G_{K'}$.  To show
that~\eqref{E:PsiM} holds for all $\sigma\in G_{K}$, it suffices,
since both sides of~\eqref{E:PsiM} are homomorphisms $G_K\to
\GL_2(\Fp)$, to do so for a single element~$\tau\in G_K\setminus
G_{K'}$.

Write $\Psi(\tau)=\diag(w,w^{-1})$ with $w\in\F_{p^2}^*$.  Define
\[
A=\rhobar_{E,p}(\tau)\diag(w,1)=\diag(1,w)\rhobar_{E,p}(\tau),
\]
the second equality following since~$\rhobar_{E,p}(\tau)$ is
antidiagonal.  Then $\rhobar_{E,p}(\tau)A^{-1}=\diag(1,w^{-1})$, so
\[
A \rhobar_{E,p}(\tau) A^{-1} = \rhobar_{E,p}(\tau)\Psi(\tau)
\]
as required.

In the split Cartan case we are done, as the diagonalization of~$C$
occurs in~$\GL_2(\Fp)$ and so~\eqref{E:PsiM} holds with $A \in
\GL_2(\F_{p})$.  In the non-split case, we have shown that
equation~\eqref{E:PsiM} holds over $\GL_2(\F_{p^2})$. By undoing the
initial change of coordinates we obtain that \eqref{E:PsiM} holds
in~$\GL_2(\Fp)$.

We will now prove the second statement. First, recall from the first paragraph that $u \in K^*$ is unique up to inverse and
multiplication by an element
in~$K^*\cap(K'^*)^n$. Therefore, up to $n$th powers in~$K^*$,  
we have either four or two
possible choices for~$u$, namely
\begin{itemize}
  \item $\{u, u^{-1}, -4u, -4u^{-1}\}$ if $n=4$;
  \item $\{u, u^{-1}, -27u, -27u^{-1}\}$ if $n=6$;
  \item $\{u, u^{-1}\}$ if $n=3$.
\end{itemize}

This follows from the observation that the natural map $K^*/(K^*)^n
\to (K'^*)/(K'^*)^n$ is injective when $n=3$, and has kernel $\{1,-4\}$
for $n=4$ and $\{1,-27\}$ for $n=6$.

Secondly, observe that the construction in the first part of the proof
only works for exactly one out of each inverse pair~$u,u^{-1}$ mod
$n$th powers, so we have two quartic or sextic twists when $n=4$ or
$n=6$ respectively, and just one cubic twist when $n=3$.  The two
quartic twists differ by the quartic twist by~$-4$, so by a
$2$-isogeny, while the two sextic twists differ by the sextic twist
by~$-27$, so by a $3$-isogeny (see Remark~\ref{R:2-3-isog}).
\end{proof}

See Theorem~\ref{T:higherTwistsQ} for applications of this theorem,
including the determination of the symplectic type of the congruences
using Theorem~\ref{T:typeHigher} below.

\subsection{The symplectic type of congruences between higher order twists}
\label{SS:typeHigher}
We have classified above, under condition~{\bf (S)}, exactly when congruences between twists occur. To complete this part of our study we are left to describe the symplectic type of congruences between higher order twists. 
This is given by the following result. 

\begin{theorem}\label{T:typeHigher}
  Let $p$ be an odd prime, $K$ a number field, $E/K$ an elliptic
  curve, and $u\in K^*$.  Assume that
  \begin{itemize}
    \item \emph{either:} $j(E)=1728$, $\sqrt{-1}\notin K$, $u\not=\pm1$
      modulo squares, and $n=4$;
    \item \emph{or:} $j(E)=0$, $\sqrt{-3}\notin K$, $u\not=1,-3$
      modulo squares, and $n=3$ or~$6$.
  \end{itemize}

 Let $E'/K$ be the order~$n$ twist of~$E/K$ by~$u$.
 Suppose that $E$ and~$E'$ are $p$-congruent.
\begin{enumerate}
\item If $\sqrt{p^*}\in K$ then $E[p]$ and~$E'[p]$ are symplectically isomorphic
and $p\equiv\pm1\pmod{2n}$.
\item Assume $\sqrt{p^*}\notin K$. Then:
\begin{enumerate}
\item if $n=3$ then $E[p]$ and~$E'[p]$ are anti-symplectically isomorphic;
\item if $n=4$ then $E[p]$ and~$E'[p]$ are symplectically isomorphic if and only if $\sqrt{up^*}\in K$, and the congruence with the quartic twist by~$-4u$ has the opposite symplectic type; 
moreover, $p\equiv\pm3\pmod8$;
\item if $n=6$ then $E[p]$ and~$E'[p]$ are symplectically isomorphic if and only if
  $\sqrt{up^*}\in K$, and then the sextic twist by~$-27u$ is
  antisymplectic; moreover, $p\equiv\pm5\pmod{12}$.
\end{enumerate}
\end{enumerate}
\end{theorem}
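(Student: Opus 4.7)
The argument extends the construction from Theorem~\ref{T:higherTwists}. There, the intertwining matrix is explicitly $A=\diag(1,w)\,\rhobar_{E,p}(\tau)$ for any $\tau\in G_K\setminus G_{K'}$, with $\Psi(\tau)=\diag(w,w^{-1})$. Since $\rhobar_{E,p}(\tau)$ is anti-diagonal,
\[
\det A = w\cdot \det\rhobar_{E,p}(\tau),
\]
and the discussion after~\eqref{E:PsiM} tells us that the symplectic type is detected by the class of $\det A$ in $\Fpstar/(\Fpstar)^2$. Selecting $u$ and $\tau$ as in the proof of Lemma~\ref{L:Dn}, namely with $\tau(\sqrt[n]{u})=\sqrt[n]{u}$, one finds $\psi(\tau)=1$, whence $w=1$ and $\det A$ equals the value of the mod-$p$ cyclotomic character at~$\tau$. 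Thus the symplectic type reduces to whether $\tau$ fixes $\sqrt{p^*}$. A crucial auxiliary fact is $K(\sqrt{p^*})\subseteq F$, where $F$ is the projective $p$-division field of~$E$: indeed $K(\sqrt{p^*})\subseteq K(E[p])$ by the Weil pairing, and every scalar matrix in $\rhobar_{E,p}(G_K)$ has square determinant, so every element of~$G_F$ fixes~$\sqrt{p^*}$.

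Part~(1) is then immediate: with $\sqrt{p^*}\in K$ the cyclotomic character takes values in squares, $\det A$ is a square, and the congruence is symplectic. The projective image $D_n$ must then lie in $\PSL_2(\Fp)$, so $C_n$ embeds in a maximal cyclic subgroup of $\PSL_2(\Fp)$, of order $(p\pm 1)/2$, forcing $p\equiv\pm 1\pmod{2n}$.

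For Part~(2), $\sqrt{p^*}\notin K$ makes $K(\sqrt{p^*})$ a nontrivial quadratic subfield of~$F/K$, corresponding to one of the index-$2$ subgroups of $\Gal(F/K)\cong D_n$. Direct enumeration, using $F=K'(\sqrt[n]{u})$, lists the quadratic subfields as: only $K(\sqrt{-3})$ when $n=3$; $K(\sqrt{-1})$, $K(\sqrt{u})$ and $K(\sqrt{-u})$ when $n=4$; and $K(\sqrt{-3})$, $K(\sqrt{u})$ and $K(\sqrt{-3u})$ when $n=6$. Since $\tau$ fixes $\sqrt[n]{u}$ it fixes $K(\sqrt{u})$, but it acts non-trivially on any subfield involving $\sqrt{-1}$ or $\sqrt{-3}$. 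This forces case~(a): $K(\sqrt{p^*})=K(\sqrt{-3})$ is not fixed by~$\tau$, yielding the anti-symplectic conclusion. In cases~(b) and~(c) one reads off: symplectic iff $K(\sqrt{p^*})=K(\sqrt{u})$, equivalently $\sqrt{up^*}\in K$. The companion twists by $-4u$ (for $n=4$) and $-27u$ (for $n=6$) correspond to the distinguished element $\sigma\tau$, which fixes $K(\sqrt{-u})$ respectively $K(\sqrt{-3u})$, giving the ``opposite symplectic type'' and ``anti-symplectic companion'' statements. Finally, combining $K(\sqrt{p^*})=K(\sqrt{u})$ with the running hypothesis $u\not\equiv-1\pmod{(K^*)^2}$ for $n=4$ (respectively $u\not\equiv-3$ for $n=6$) gives $K(\sqrt{p^*})\not\subseteq K'$, hence $C_n\not\subseteq\PSL_2(\Fp)$, which translates into $p\equiv\pm 3\pmod 8$ (respectively $p\equiv\pm 5\pmod{12}$).

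The main delicate step will be carefully tracking how the distinguished element changes under $u\mapsto-4u$ and $u\mapsto-27u$, and verifying that the running hypotheses on~$u$ (which exclude the twists that arise from the $2$- and $3$-isogenies) rule out the degenerate coincidences among the quadratic subfields, so that the residue conditions on~$p$ genuinely follow from the symplectic criterion.
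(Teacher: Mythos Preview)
Your proposal is correct and follows essentially the same route as the paper: both choose $\tau\in G_K\setminus G_{K'}$ fixing $\sqrt[n]{u}$, take $A=\rhobar_{E,p}(\tau)$ (so $w=1$), reduce the symplectic type to whether $\tau$ fixes $\sqrt{p^*}$, and then argue by locating $K(\sqrt{p^*})$ among the quadratic subfields of the projective $p$-division field~$F$. The one genuine point of divergence is the derivation of the congruence conditions on~$p$: the paper obtains these from the isogeny criterion (the $2$- or $3$-isogeny linking the two companion twists must induce a congruence whose symplectic type equals $\legendre{2}{p}$ or $\legendre{3}{p}$, and you have already determined that type), whereas you argue group-theoretically that $C_n\subseteq\PSL_2(\Fp)$ forces $2n\mid p\pm1$ in case~(1) and $C_n\not\subseteq\PSL_2(\Fp)$ forces the complementary residue class in case~(2); both arguments are valid and yield the same conclusion, though note that your final sentence in case~(2) only literally treats the sub-case $K(\sqrt{p^*})=K(\sqrt{u})$, and you should state explicitly that the same reasoning applies when $K(\sqrt{p^*})=K(\sqrt{-u})$ (respectively $K(\sqrt{-3u})$), which is the only other possibility once one knows $K(\sqrt{p^*})\neq K'$—a point the paper simply asserts and that the hypotheses on~$u$ alone do not settle.
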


\begin{proof}
This proof builds on the proof of Theorem~\ref{T:higherTwists}.
Indeed, we have chosen $\tau\in G_K$ to be such that it fixes~$\root n\of u$ and 
is non-trivial when restricted to~$K'$ and 
we let $A = \rhobar_{E,p}(\tau)$. We have shown that~$A$ satisfies~\eqref{E:PsiM} and so, by the discussion following~\eqref{E:PsiM}, 
the symplectic type of the congruence is given by the square class of~$\det A$. 

We also know that the projective $p$-division field is $F = K'(\root n \of u)$
and $\Gal(F/K) \simeq D_n$.

We have $\det(A)=\det\rhobar_{E,p}(\tau)=\chi_p(\tau)$ where $\chi_p$ denotes the mod~$p$ cyclotomic character.  Since 
the $p$-division field of~$E$ contains the $p$-th roots of unity, and
the projective $p$-division field contains~$\sqrt{p^*}$, the
congruence is symplectic 
if and only if $\tau$ fixes~$\sqrt{p^*}$.
Clearly, when $\sqrt{p^*}\in K$ the congruence is symplectic, proving (1) except for the congruence condition.

Suppose now that $\sqrt{p^*}\notin K$.
We divide into cases:

(a) Suppose $n=3$. 
We have $\Gal(F/K) \simeq D_3$ and so $K'$ is the unique quadratic subfield of~$F$, therefore $\sqrt{p^*} \in K'$. Since~$\tau$ acts non-trivially on~$K'$ the congruence is anti-symplectic.

(b) Suppose $n=4$. We have $\Gal(F/K) \simeq D_4$ and there are exactly three quadratic sub-extensions of~$F/K$.
Furthermore, the fields 
$K'=K(\sqrt{-1})$, $K(\sqrt{p^*})$, $K(\sqrt{u})$ 
are quadratic extensions of~$K$ satisfying
$K' \neq K(\sqrt{p^*})$, $K(\sqrt{u})$. 

(b1) Suppose $K(\sqrt{p^*})=K(\sqrt{u})$.  By definition, $\tau$
fixes~$\sqrt{u}$, thus it also fixes~$\sqrt{p^*}$, and the congruence
is symplectic; note that in this case~$\sqrt{up^*} \in K$.
 
(b2) Suppose $K(\sqrt{p^*}) \neq K(\sqrt{u})$. Then 
$K(\sqrt{p^*})=K(\sqrt{-4u})$ and 
since $\tau(\sqrt{-4u}) = - \sqrt{-4u}$
the congruence is anti-symplectic;
note that in this case~$\sqrt{up^*} \not\in K$. 

Recall from Theorem~\ref{T:higherTwists} that there is also a
$p$-congruence between~$E$ and its quartic twist by~$-4u$. Applying
the previous argument to this congruence gives that it is symplectic
if and only if $\sqrt{-4up^*} \in K$. Thus the two congruences are of
the same type if and only if~$\sqrt{-1} \in K$, which is not the case
by assumption.

(c) Suppose $n=6$. We have $\Gal(F/K) \simeq D_6$ and again there are exactly three quadratic sub-extensions of~$F/K$.
Furthermore, the fields 
$K'=K(\sqrt{-3})$, $K(\sqrt{p^*})$, $K(\sqrt{u})$ 
are quadratic extensions of~$K$ satisfying
$K' \neq K(\sqrt{p^*})$, $K(\sqrt{u})$. 
The rest of the argument follows similarly to case~(b).

For the congruence conditions on~$p$, which are only non-trivial when
$n=4$ or~$6$ (since $p>3$ implies $p\equiv\pm1\pmod6$) recall that
when $n=4$ or~$6$ the two quartic (respectively, sextic) twists are
$2$-isogenous (respectively $3$-isogenous) to each other.  When
$\sqrt{p^*}\in K$ these isogenies induce symplectic congruences, since
both the twists of~$E$ are symplectically congruent.  By the isogeny
criterion, this implies that $2$ (respectively~$3$) is a quadratic
residue, so $p\equiv\pm1\pmod{8}$ (respectively, mod~$12$).  When
$\sqrt{p^*}\notin K$ these isogenies induce anti-symplectic
congruences, so $p\not\equiv\pm1\pmod{8}$ (respectively, mod~$12$).
\end{proof}

We end this section by proving Theorem~\ref{T:higherTwistsQ} from the
Introduction, as an illustration of how Theorems~\ref{T:higherTwists}
and~\ref{T:typeHigher} may be applied.  These include all
possibilities for quartic and sextic twist congruences over~$\Q$ with
$p=3$ or~$5$ in the quartic case and $p=5$ or~$7$ in the sextic case.

\begin{proof}[Proof of Theorem~\ref{T:higherTwistsQ}] Let $K=\Q$.  We
  consider curves with $j$-invariant~$1728$ and~$0$ in turn.

\subsubsection*{$j=1728$}
  First consider the curves $E_a := E_{a,0}:\ Y^2=X^3+aX$, for $a \in
  \Q^*$, which have $j$-invariant~$1728$.

  By Proposition~1.14(ii)
  in~\cite{Zywina}, the mod~$3$ projective image is the normaliser of
  a non-split Cartan subgroup, isomorphic to~$D_4$.  The projective
  division field~$F$ is obtained by adjoining the roots of the
  $3$-division polynomial $3X^4+6aX^2-a^2$, and
  $F=\Q(\sqrt{-1},\root4\of u)$ where~$u=-a^2/3$ and it is not hard to
  see that $a/\sqrt{-3}$ is not a square in $\Q(\sqrt{-1},\sqrt{-3})$.

Following the proof of Theorem~\ref{T:higherTwists}, one checks
that up to $4$th powers, the subgroup of $\Q^*/(\Q^*)^4$ which become
$4$th powers in~$F$ is generated by~$u$ and~$-4$: the two maps
$\Q^*/(\Q^*)^4 \to \Q(\sqrt{-1})^*/(\Q(\sqrt{-1})^*)^4$ and
$\Q(\sqrt{-1})^*/(\Q(\sqrt{-1})^*)^4 \to F^*/(F^*)^4$ have kernels
generated by~$-4$ and~$u$ respectively.  Hence, from
Theorem~\ref{T:higherTwists}, we expect a $3$-congruence between
$E_a$ and either $E_{-1/(3a)}$ (twisting by~$u$) or $E_{-3/a}$
(twisting by~$u^{-1}=-3/a^2$).  An explicit computation shows that
only the first holds.

Finally, we have $p^* = -3$ and $u=-a^2/3$, hence $\sqrt{p^*u} = a \in
\Q$ and the congruence is symplectic by Theorem~\ref{T:typeHigher}.
Moreover, there is a congruence with the quartic twist by $-4u=4a^2/3$
which is antisymplectic.

In summary, $E_a$ is symplectically $3$-congruent to $E_{-1/3a}$ and
anti-symplectically $3$-congruent to both $E_{-4a}$ and $E_{4/3a}$.
Note that in passing from the special case of the $3$-congruence
between $E_1$ and $E_{-1/3}$ to the general case of the congruence
between $E_{a}$ and $E_{-1/3a}$, we apply the quartic twist by~$a$ to
the first curve, but the inverse twist (by~$a^{-1}$) to the second.

Now let $p=5$.  The projective image is the normaliser of a split
Cartan subgroup, isomorphic to~$D_4$, by Proposition 1.14(i)
of~\cite{Zywina}.  One can check that up to $4$th powers, the subgroup
of $\Q^*/(\Q^*)^4$ which become $4$th powers in the projective
$5$-division field of~$E_a$ is generated by~$5/a^2$ and~$-4$.  Hence
we expect $5$-congruences between $E_a$ and either $E_{5/a}$ (twisting
by~$u=5/a^2$) or $E_{a^3/5}$ (twisting by~$u=a^2/5$).  Only the first
holds (by a computation similar to the previous example, though a
little simpler since we are in the split case so do not need to extend
scalars).  Since $u=5/a^2$ is $5$ times a square, and~$p^*=5$, the
congruence is symplectic.  There is also a congruence with the quartic
twist by $-4u=-20/a^2$, which is antisymplectic.

\subsubsection*{$j=0$}
Next consider the family of curves $E_b:=E_{0,b}:\ Y^2=X^3+b$, which
have $j$-invariant~$0$.

For $p=5$, we can apply Proposition 1.14(iv) of~\cite{Zywina} to see
that the projective image is the normaliser of a non-split Cartan
subgroup, isomorphic to~$D_6$, unless $b/10$ is a cube, in which case
the projective image is~$D_2$.  In the case where $b/10$ is not a
cube, we expect a $5$-congruence between $E_b$ and $E_{bu}$ where
$u\equiv5$ (modulo squares) and $u\equiv b/10$ or~$10/b$ (modulo
cubes), since one may check that $b/10$ is a cube in $\Q(E_b[5])$.
Hence, modulo $6$th powers, we have either $u\equiv 4/5b^2$ or
$u\equiv 5b^2/4$.  The first works, hence $E_b$ and $E_{4/(5b)}$ are
$5$-congruent.  This congruence is symplectic; composing with the
$3$-isogeny we also have an anti-symplectic congruence between $E_b$
and $E_{-108/(5b)}$.  In case $b/10$ is a cube, the sextic twist
by~$b/10$ is a quadratic twist, and we have three different
$5$-congruent quadratic twists, as expected when the projective image
is~$D_2$.

When $p=7$, by Proposition 1.14(iii) of~\cite{Zywina}, the projective
image is the normaliser of a split Cartan subgroup, isomorphic
to~$D_6$, unless $7b/2$ is a cube, in which case the projective image
is again~$D_2$.  In the general case, we expect a $7$-congruence
between $E_b$ and $E_{bu}$ where $u\equiv-7$ (modulo squares) and
$u\equiv 7b/2$ or~$2/(7b)$ (modulo cubes), since one may check that
$7b/2$ is a cube in $\Q(E_b[7])$.  Hence, modulo $6$th powers, we have
either $u\equiv -28/b^2$ or $u\equiv -b^2/28$.  The first works, hence
$E_b$ and $E_{-28/b}$ are $7$-congruent.  This congruence is
symplectic; composing with the $3$-isogeny we also have an
anti-symplectic congruence between $E_b$ and $E_{756/b}$. In case
$7b/2$ is a cube, the sextic twist by~$7b/2$ is a quadratic twist, and
we have three different $7$-congruent quadratic twists, as expected
when the projective image is~$D_2$.
\end{proof}

\section{Finding congruences and their symplectic type}\label{S:statistics}

In this section we discuss our systematic study of mod~$p$ congruences
between elliptic curves in the LMFDB database.  As of September 2019, this
database contains all elliptic curves defined over~$\Q$ of
conductor~$N\le\numprint{500000}$, as computed by the first author
using the methods of~\cite{AMEC}; there are $\numprint{3064704}$
curves, in $\numprint{2164259}$ isogeny classes.

Recall first that isogenous curves have mod~$p$ representations which
are isomorphic up to semisimplification, and actually isomorphic if
the degree of the isogeny is not divisible by~$p$.  Secondly, two
representations have isomorphic semisimplification if and only if they
have the same traces, so that we can test this condition by testing
whether
\[ a_{\ell}(E)\equiv a_{\ell}(E')\pmod{p}
\quad \text{for all primes } \ell \nmid pNN',
\] 
where $N$ and $N'$ are the conductors of $E$
and~$E'$ respectively.  This test can very quickly establish rigorously
that two curves do \emph{not} have isomorphic $p$-torsion up to
semisimplification, by finding  a single
prime~$\ell$ such that $a_{\ell}(E)\not\equiv a_{\ell}(E')\pmod{p}$.
Moreover, it is possible to prove that two curves have isomorphic $p$-torsion up to
semisimplification using this test for a finite number of
primes~$\ell$, as we explain in Step 2 below.

We divide our procedure to determine all mod~$p$ congruences between
non-isogenous curves, and their symplectic type, for a fixed
prime~$p$, into five steps.  Note that, as remarked in the
Introduction, Condition (S) is satisfied for $p\ge7$ for all elliptic
curves defined over~$\Q$.  We first outline the steps, and then
consider each in detail in the following subsections.
\begin{enumerate}[1.]
\item Partition the set of isogeny classes of elliptic curves in the LMFDB
  into subsets~$S$, such that whenever two curves have mod~$p$
  representations with isomorphic semisimplifications, their isogeny
  classes belong to the same subset~$S$, but not necessarily
  conversely.
\item For each subset~$S$ prove that the curves in each isogeny class in~$S$ really
  do have isomorphic mod~$p$ representations up to
  semisimplification, if necessary further partitioning the subsets.
  Discard all ``trivial'' subsets of size~$1$.
\item Separate the remaining subsets resulting from the previous
  step into those which have irreducible mod~$p$ representations and
  the reducible ones.
\item For each irreducible subset~$S$, and each pair of isogeny
  classes in~$S$, pick curves $E$ and~$E'$, one from each class in the
  pair; determine the symplectic type of the triple~$(E,E',p)$; then
  use the isogeny criterion to partition the set of all the curves in
  all the isogeny classes in~$S$ into one or two parts such that curves
  in the same part are symplectically isomorphic while those in
  different parts are antisymplectically isomorphic.
\item For each reducible subset~$S$, determine whether, for each pair
  $E$, $E'$ chosen as in Step 4, there is an isomorphism between
  $E[p]$ and $E'[p]$ and not just between their semisimplifications,
  if necessary replacing $E'$ with the curve $p$-isogenous to it.  If
  not, this means that $E[p]$ and $E'[p]$ are not in fact isomorphic.
  Thus we further partition each reducible set $S$ into subsets of
  isogeny classes of curves whose mod~$p$ representations are actually
  isomorphic, not just up to semisimplification.  For each of these
  new subsets, if nontrivial, proceed as in Step~4.

\end{enumerate}
Next we will explain each step in further detail.  For the first three
steps, $p$ is arbitrary, and we have carried these steps out for $7\le
p\le97$.  According to Theorem~\ref{T:cong19}, no congruences
(other than those induced by isogenies) exist for larger~$p$.  For the
last two steps, we restrict to $p=7$ which is the most interesting
case, as remarked in the Introduction.

\subsection{Sieving}
In order that $E[p]\cong E'[p]$ up to semisimplification, it is
necessary and sufficient that for all primes~$\ell$ not dividing
$pNN'$ we have $a_{\ell}(E)\equiv a_{\ell}(E')\pmod{p}$.  In this step
we may take one curve from each isogeny class, since isogenous curves
have the same traces~$a_\ell$, and have mod~$p$ representations with
isomorphic semisimplifications.

Fix an integer~$B\ge1$.  Let $\calL_B$ be the set of the $B$ smallest
primes greater than $\numprint{500000}$. All curves in the database
have good reduction at each prime in~$\calL_B$.  Assume also that
$p\notin\calL_B$. Hence a necessary condition for
two curves $E$ and $E'$ in the database to be congruent mod~$p$ is
that $a_{\ell}(E)\equiv a_{\ell}(E')\pmod{p}$ for all~$\ell\in\calL_B$.

To each curve $E$ in the database we assign a ``hash value'' which is
a simple function of the set $\{a_{\ell}(E)\pmod{p}\mid
\ell\in\calL_B\}$.  For example we may enumerate
$\calL_B=\{\ell_0,\ell_1,\dots,\ell_{B-1}\}$ and use the integer value
$\sum_{i=0}^{B-1}\overline{a}_{\ell_i}(E)p^i$, where for $a\in\Z$,
$\overline{a}$ denotes the reduction of $a$ mod~$p$ which lies in
$\{0,1,\dots,p-1\}$.  Curves whose mod~$p$ representations are
isomorphic up to semisimplification will have the same hash, and we
may hope that clashes will be rare if $B$ is not too small.

We proceed to compute this hash value for one curve in each isogeny
class in the database, recording the curve's label in a list indexed
by the different hash values encountered.  At the end of this step we
can easily form a partition of the set of isogeny classes by taking
these lists for each hash value.  We then discard any such lists which
are singletons.  Using $B=40$, this process takes approximately 40
minutes for a single prime~$p$.  Note, however, that as most of the
computation time taken is in computing $a_{\ell}(E)$ for all
curves~$E$ (up to isogeny), it is more efficient to compute the hash
values for several primes in parallel.

{\bf Example.} After carrying out this step for $7\le p\le17$, using
$B=50$, we find: $\numprint{23735}$ nontrivial subsets for $p=7$;
$731$ for $p=11$; $177$ for $p=13$; and~$8$ for $p=17$.  There are no
nontrivial subsets for any primes~$p$ with $19\le p\le97$, so we can
immediately conclude that there are no congruences in the database
between non-isogenous curves modulo any prime in this range.  See also
Theorem~\ref{T:cong19}.

\subsection{Proving isomorphism up to semisimplification}

For each pair of isogeny classes within one subset obtained in the
previous step, we use a criterion of Kraus--Oesterl\'e (see
\cite[Proposition~4]{KO}), based on the Sturm bound and hence on the
modularity of elliptic curves over~$\Q$, to either prove isomorphism
up to semisimplification, or reveal a ``false positive''.  The latter
would happen if two curves which are not congruent mod~$p$ have traces
of Frobenius~$a_{\ell}$ which are congruent modulo~$p$ for
all~$\ell\in\calL_B$.

{\bf Example (continued).}  For $7\le p\le17$ we find no such false
positives, so the curves within each subset do have mod~$p$
representations which are genuinely isomorphic up to
semisimplification.

\begin{remark}
  To avoid false positives, it is necessary to use a value of $B$
  which is large enough.  In our initial computations with conductor
  bound~$\numprint{400000}$ we initially used 30 primes
  above~$\numprint{400000}$. But the curves with labels
  \lmfdbec{25921}{a}{1} and \lmfdbec{78400}{gw}{1} have traces
  $a_{\ell}$ which are \emph{equal for all~$\ell\in\calL_{35}$}, that
  is, for all~$\ell$ with $400000\le \ell<400457$ (though not for
  $\ell=400457$).  These curves have CM by the order of
  discriminant~$-7$, and are quadratic twists by~$230$; both have
  $a_\ell=0$ for all $\ell\equiv3,5,6\pmod{7}$, and $230$ is a
  quadratic residue modulo all other primes in~$\calL_{35}$. In our
  first computational runs (with $N\le\numprint{400000}$), we used
  $B=30$ and discovered this pair of curves giving rise to a false
  positive for every~$p$.
\end{remark}

The sizes of the subsets of isogeny classes we find after the first
two steps are as follows: for $p=7$ the~$\numprint{23735}$ subsets
have sizes between~$2$ and~$80$; for $p=11$, $p=13$ and $p=17$ they
all have size~$2$.

\subsection{Testing reducibility}
For each set of isogeny classes of curves obtained in the previous
step, we next determine whether the curves in the set have irreducible
or reducible mod~$p$ representations.  To do this we apply a standard
test of whether an elliptic curve admits a rational $p$-isogeny.  For
the curves in the database this information is already known.

{\bf Example (continued).} For $p=7$, of the $\numprint{23735}$
nontrivial sets from Step~2, we find that $\numprint{23448}$ are
irreducible, {\it i.e.}\@ consist of curves whose mod~$7$
representations are irreducible, while $287$ are reducible.

The irreducible sets have size at most~$5$.  In detail, there are
$\numprint{21653}$ sets of size~2; $\numprint{1502}$ sets of size~3;
283 sets of size~4; and 10 sets of size~5.

The reducible sets have size up to~$80$.  In Step~5 below we will further
partition these sets after testing whether the curves are actually congruent
mod~7 (not just up to semisimplification), after which the largest subset has
only~$4$ isogeny classes.


For $p=11, 13$, and~$17$, all the nontrivial subsets are of size~$2$,
and all are irreducible.

\subsection{Distinguishing symplectic from antisymplectic: irreducible case}
After the previous step we have a collection of sets of isogeny
classes, such that for each pair of curves $E$, $E'$ taken from
isogeny classes in each set, the $G_{\Q}$-modules $E[p]$ and $E'[p]$
are isomorphic and irreducible. Moreover, from
Proposition~\ref{P:conditionS} we know that all isomorphisms $\phi : E[p]
\simeq E'[p]$ have the same symplectic type. We wish to determine
whether this type is symplectic or anti-symplectic.  We may assume
that $E$ and~$E'$ are not isogenous, as otherwise we may simply apply
the isogeny criterion.

The local criteria of \cite{FKSym} suffice to determine the symplectic
type for all the mod~$p$ congruences found in the database for $p=7$
and $p=11$ (and also for $p=13, 17$), but this does not have to be
the case as discussed in Section~\ref{S:motivation} (see
\cite[Proposition~16]{FKSym} for an example with $p=3$ where the local
methods fail).  Therefore, we will now describe a procedure, using the
modular curves~$X_E(7)$, to obtain a method that works in all cases.
We will use the modular parametrizations and explicit formulae of
Kraus--Halberstadt~\cite{Halberstadt-Kraus-XE7},
Poonen--Schaefer--Stoll~\cite{PSS}, and as extended and completed by
Fisher~\cite{Fisher}.

In~\cite{Halberstadt-Kraus-XE7}, Halberstadt and Kraus give an
explicit model for the modular curve $X_E(7)$, for any elliptic curve
$E$ defined over a field~$K$ of characteristic not equal to $2$, $3$
or~$7$. Recall that the $K$-rational points on $X_E(7)$ parametrize
pairs $(E',\phi)$ where $E'$ is an elliptic curve defined over~$K$ and
$\phi:E[7]\to E'[7]$ is a symplectic isomorphism of $G_K$-modules; we
identify two such isomorphisms~$\phi$ when one is a scalar multiple of
the other.

The model for $X_E(7)$ given in \cite{Halberstadt-Kraus-XE7} is a
plane quartic curve, a twist of the classical Klein quartic~$X(7)$,
given by an explicit ternary quartic form~$F_{a,b}(X,Y,Z)$ in
$\Z[a,b][X,Y,Z]$ where $E$ has equation $Y^2=X^3+aX+b$.  The $24$
flexes on $X_E(7)$ are the cusps, that is, they are the poles of the
rational function of degree~$168$ giving the map $j:X_E(7)\to
X(1)$.

The base point $P_E=[0:1:0]\in X_E(7)(K)$ corresponds to the pair
$(E,\id)$.  In \cite{Halberstadt-Kraus-XE7} one can also find explicit formulas
for the rational function $j:X_E(7)\to X(1)$ and for the elliptic curve~$E'$
associated with all 
but finitely many points $P=(x:y:z)\in
X_E(7)$. More precisely, 
explicit polynomials $c_4, c_6 \in \Z[a,b][X,Y,Z]$
of degree~$20$ and~$30$, respectively, are given and the curve $E'$ associated with (all but finitely many)~$P$ has model
\[Y^2=X^3-27c_4(P)X-54c_6(P).\]

The finitely many common zeros of $c_4$
and $c_6$ are the exceptions, which Kraus and Halberstadt treat only
incompletely.  However, in \cite{Fisher} one may find formulas for
four such pairs of polynomials~$(c_4,c_6)$, of which the first is the pair in
\cite{Halberstadt-Kraus-XE7}, and such that at each point $P\in
X_E(7)$ at least one pair $(c_4(P),c_6(P))\not=(0,0)$, thus supplying
us with a model for the associated elliptic 
curve~$E'$ at each point~$P$.

We make use of this model and formulas as follows, given curves $E$,
$E'$ with $E[7]\cong E'[7]$.  Using one curve~$E$ we write down the
model for $X_E(7)$.  Then we find all preimages (if any) of $j'=j(E')$
under the map $X_E(7)\to X(1)$.  While over an algebraically closed
field there are $168$ distinct preimages of each $j'$, except that the
ramification points $j=0$ and $j=1728$ have $56$ and $84$ preimages,
over $\Q$, there are fewer: in the irreducible case there are at
most~$4$ by the results of Section~\ref{S:cong-twist}.

If there are no preimages of~$j'$, we conclude that the isomorphism
$E[7]\cong E'[7]$ is not symplectic.  Otherwise, for each preimage
$P\in X_E(7)(\Q)$ we compute the curve associated to~$P$, which may be
a twist of~$E'$, and test whether it is actually isomorphic to $E'$.
If this holds for one such point~$P$ in the preimage of $j(E')$, then
the isomorphism between $E[7]$ and $E'[7]$ is symplectic.

A similar method may be applied to test for antisymplectic
isomorphisms, using another twist of $X(7)$ denoted $X_E^-(7)$, first
written down explicitly in \cite{PSS}, for which Fisher provides
explicit formulae for the $j$-map and $c_4,c_6$ as above
in~\cite{Fisher}.

We note that it is not necessary to apply both the symplectic and
antisymplectic tests to a triple $(E,E',7)$ if we know already that
$E[7]\cong E'[7]$ as $G_\Q$-modules, since one will succeed if and only if the other
fails (by Proposition~\ref{P:conditionS}).  However we did apply both
tests in our computations with the curves in the database as a test of
our implementation, verifying that precisely one test passes for each
pair.  We also checked that the results obtained for each pair using
the local criteria are the same, so that we can be confident in the
correctness of the results.

These tests have only been carried out using a single curve in each
isogeny class, since we know how to distinguish symplectic from
antisymplectic isogenies.  As a last step, we consider the full isogeny classes
to obtain, for each elliptic curve~$E$ in the database, the complete sets of all curves~$E'$ (non-isogenous to~$E$) which have
symplectically and anti-symplectically isomorphic $7$-torsion modules to~$E$.

The output of this step consists of, for each of the subsets resulting
from Steps~1--3, one or two sets of curves whose union is the set of
all curves in the isogeny classes in the subset.  All curves in the
same set have symplectically isomorphic 7-torsion modules; when there
are two sets, curves in different sets have antisymplectically
isomorphic 7-torsion.

{\bf Example (continued).}  Of the $\numprint{23448}$ non-trivial sets
of isogeny classes with mutually isomorphic irreducible mod~$7$
representations, we find that in $\numprint{16285}$ cases all the
isomorphisms are symplectic, while in the remaining $\numprint{7163}$
cases antisymplectic isomorphisms occur.

Using the local criteria of \cite{FKSym} for $p=11, 13, 17$ we find:
for $p=11$, of the $731$ congruent pairs of isogeny classes, $519$ are
symplectic and $212$ are antisymplectic; for $p=13$, of the $177$
congruent pairs of isogeny classes, $105$ are symplectic and $72$ are
antisymplectic; for $p=17$, all of the $8$ congruent pairs of isogeny
classes are antisymplectic.

\begin{example}
Let $p=7$.  One of the subsets resulting from Steps 1--3 consists of
the pair of isogeny classes $\{\lmfdbec{344025}{bc}{1},
\lmfdbec{344025}{bd}{1}\}$.  Our test shows that
$\lmfdbec{344025}{bc}{1}$ and $\lmfdbec{344025}{bd}{1}$ are
symplectically isomorphic.  The isogeny class
$\lmfdbeciso{344025}{bc}$ contains two $2$-isogenous curves, while
class $\lmfdbeciso{344025}{bd}$ contains only one curve.  Since $2$ is
a quadratic residue mod~7, all three curves have symplectically
isomorphic 7-torsion, and hence Step~4 returns a single set
\[\{\lmfdbec{344025}{bc}{1}, \lmfdbec{344025}{bc}{2}, \lmfdbec{344025}{bd}{1}\}.\]
Another subset resulting from Steps 1--3 is
$\{\lmfdbec{100800}{gw}{1}, \lmfdbec{100800}{hc}{1}\}$.  The same
procedure results in the output of two sets of curves
\[
\{\lmfdbec{100800}{gw}{1}\}, \qquad \{\lmfdbec{100800}{hc}{1},
\lmfdbec{100800}{hc}{2}\},
\]
since our tests show that $\lmfdbec{100800}{gw}{1}$ and
$\lmfdbec{100800}{hc}{1}$ are antisymplectically isomorphic, and the
last two curves are $2$-isogenous.
\end{example}

\begin{example}
Consider the set of six elliptic curves
\[\{\lmfdbec{9225}{a}{1}, \lmfdbec{9225}{e}{1}, \lmfdbec{225}{a}{1},
\lmfdbec{225}{a}{2}, \lmfdbec{11025}{c}{1}, \lmfdbec{11025}{c}{2}\}\]
which form four
complete isogeny classes.  All have isomorphic mod~7 representations
with image the normaliser of a split Cartan subgroup.  The last four
curves all have $j$-invariant~$0$ and CM by $-3$.  The first two are 
$-3$ quadratic twists of each other.

The general methods of Step~4 of this section split this set into two
subsets:
\[
  \{\lmfdbec{9225}{a}{1}, \lmfdbec{225}{a}{1},
  \lmfdbec{11025}{c}{1}\},\qquad \{\lmfdbec{9225}{e}{1},
  \lmfdbec{225}{a}{2}, \lmfdbec{11025}{c}{2}\}
\]
Curves $\lmfdbec{9225}{a}{1}$ and $\lmfdbec{9225}{e}{1}$ give a non-CM
example of Theorem~\ref{T:twist}.  Curves  $\lmfdbec{225}{a}{1}$ and
$\lmfdbec{11025}{c}{1}$ are sextic (but  not quadratic or cubic) twists
and illustrate Theorem~\ref{T:higherTwists}.
\end{example}

\subsection{Auxiliary results for the reducible case}
Compared to the irreducible case, establishing reducible congruences
requires extra work because when working with the semisimplifications
$E[7]^{ss}$ and $E'[7]^{ss}$ important information is lost.  The
objective of this section is to establish Theorem~\ref{T:reducible}
which will allow us to rigorously prove congruences in the reducible
case.

Let $B \subset \GL_2(\Fp)$ be the standard Borel subgroup, i.e. the
upper triangular matrices. Let $H \subset B$ be a subgroup of order
divisible by~$p$.  We can write~$H = D\cdot U$ where $D \subset B$ is
a subgroup of diagonal matrices and $U$ is cyclic generated by
$\mat{1}{1}{0}{1}$.  Moreover, $U$ is a normal subgroup of~$H$ and we
write $\pi : H \to H/U \simeq D$ for the quotient map.

\begin{proposition} \label{P:inner}
Let $H = D \cdot U \subset B$ and $\pi$ be as above. 
Let $\phi$ be an automorphism of~$H$. Assume that  
$\pi(x) = \pi(\phi(x))$ for all $x \in H$. 

Then $\phi$ is given by conjugation in $B$, i.e. there is $A \in B$ such that $\phi(x) = AxA^{-1}$. 
\end{proposition}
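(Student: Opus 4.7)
The plan is to exploit the semidirect product structure $H=U\rtimes D$ and reduce the problem to the cohomological triviality provided by Schur--Zassenhaus, followed by a direct matrix verification. First I would note that $U$ is normal in $B$ (it is the unique Sylow $p$-subgroup), and hence normal in $H$, so that $H=U\rtimes D$ with $D$ acting on $U$ by conjugation: explicitly, $\diag(a,d')\in D$ acts on $\mat{1}{t}{0}{1}\in U$ via $t\mapsto(a/d')\,t$. The hypothesis $\pi(\phi(x))=\pi(x)$ says that $\phi$ preserves $U$ and induces the identity on $H/U\simeq D$; in particular $\phi(D)$ is another complement to $U$ in $H$.

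Writing $\phi(d)=f(d)\cdot d$ with $f(d)\in U$, the condition that $\phi|_D$ is a homomorphism translates into the $1$-cocycle relation $f(d_1 d_2)=f(d_1)+d_1\cdot f(d_2)$ in $U$ (written additively). Because $|D|$ divides $(p-1)^2$ while $|U|=p$, the two orders are coprime, and so Schur--Zassenhaus (equivalently, the vanishing of $H^1(D,U)$) implies that $f$ is a coboundary; this gives $u\in U$ with $\phi(d)=u\,d\,u^{-1}$ for every $d\in D$. Conjugation by any element of $U\subset\ker\pi$ preserves the hypothesis $\pi\circ\phi=\pi$, so after replacing $\phi$ by $x\mapsto u^{-1}\phi(x)u$ we may assume $\phi|_D=\id$.

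It then remains to describe $\phi|_U$. Since $U\cong\Fp$ has prime order, $\phi|_U$ is multiplication by some $\alpha\in\Fpstar$, and compatibility with the $D$-action is automatic since that action on $U\cong\Fp$ is $\Fp$-linear. A direct matrix computation shows that conjugation by $A'=\diag(\alpha,1)\in B$ fixes $D$ pointwise and acts as $t\mapsto\alpha t$ on $U$, so $\phi$ coincides with conjugation by $A'$. Undoing the earlier reduction, the original automorphism is conjugation by $A=uA'\in B$, as required. The only substantive step is the cohomological vanishing; everything else reduces to elementary matrix arithmetic.
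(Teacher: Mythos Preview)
Your proof is correct. It differs from the paper's in style rather than in substance: the paper writes down the conjugating matrix $A$ explicitly by hand. Concretely, the paper observes that $\phi$ fixes scalars, records $\phi\big(\mat{a}{0}{0}{1}\big)=\mat{a}{b}{0}{1}$ (where $\mat{a}{0}{0}{1}$ generates $D$ modulo scalars) and $\phi\big(\mat{1}{1}{0}{1}\big)=\mat{1}{r}{0}{1}$, then checks that $A=\mat{r}{b(1-a)^{-1}}{0}{1}$ (or $\mat{r}{0}{0}{1}$ when $a=1$) conjugates these two generators correctly, hence conjugates all of $H$ correctly. Your route instead recognises $\phi(D)$ as a second complement to $U$ and invokes Schur--Zassenhaus (equivalently $H^1(D,U)=0$ for coprime orders) to straighten out $\phi|_D$ by an inner automorphism from $U$, after which the residual automorphism of $U$ is visibly conjugation by a diagonal matrix. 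The paper's argument is shorter and entirely elementary, with no appeal to cohomology; yours is more structural and makes transparent \emph{why} such an $A$ exists, and would generalise more readily to analogous situations with other Borel-type groups. Both produce $A\in B$ rather than $A\in H$, which is all the proposition requires.
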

\begin{proof}
First note that $\phi$ fixes all scalar matrices in~$H$, since the
assumption on~$\phi$ implies that $\phi(\mat{\lambda}{0}{0}{\lambda})
= \mat{\lambda}{b}{0}{\lambda}$ for some~$b\in\Fp$; however, $b$ must
be~$0$, as otherwise the image has order divisible by~$p$, but $\mat{\lambda}{0}{0}{\lambda}$ has order dividing $p-1$.

Next, let $a\in\Fpstar$ be such that $\mat{a}{0}{0}{1}$ generates~$D$
modulo scalars, which is cyclic; then (using the assumption on~$\phi$
again), $\phi(\mat{a}{0}{0}{1}) = \mat{a}{b}{0}{1}$ for
some~$b\in\Fp$, and $b=0$ if $a=1$.

Finally, we have $\phi(\mat{1}{1}{0}{1}) = \mat{1}{r}{0}{1}$ for
some~$r\in\Fpstar$.

Now set $A=\mat{r}{b(1-a)^{-1}}{0}{1}$ (or $A=\mat{r}{0}{0}{1}$ if
$a=1$ and $b=0$).  A simple check shows that conjugation by~$A$ has
the same effect as~$\phi$ on both $\mat{a}{0}{0}{1}$ and
$\mat{1}{1}{0}{1}$.  Since $H$ is generated by these, together with
scalar matrices, the result follows.
\end{proof}

\begin{proposition} \label{P:fieldF}
Let $p$ be a prime. Let $E/K$ be an elliptic curve such that
$\rhobar_{E,p}$ is reducible. Assume there is an element of order~$p$
in the image of~$\rhobar_{E,p}$.

Then, there is an extension~$F/K$ of degree~$p$, unique up to Galois
conjugacy, such that $E$ acquires a second isogeny over~$F$.
\end{proposition}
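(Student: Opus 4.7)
The plan is to reduce the proposition to an explicit calculation inside the Borel subgroup of~$\GL_2(\Fp)$. Fix a basis of~$E[p]$ in which the unique $G_K$-stable line (coming from reducibility) is the first coordinate axis, so that $H := \rhobar_{E,p}(G_K)$ sits inside the Borel subgroup~$B$ of upper triangular matrices. Since every order-$p$ element of~$B$ is a nontrivial power of $\mat{1}{1}{0}{1}$, the hypothesis that $p \mid |H|$ forces the order-$p$ unipotent subgroup $U = \langle \mat{1}{1}{0}{1}\rangle$ to lie in~$H$. Setting $D := H \cap T$ for $T$ the diagonal torus, this puts us in the semidirect product setup $H = U \rtimes D$ of Proposition~\ref{P:inner}; in particular $|H| = p\,|D|$.

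Next I would translate ``second isogeny over~$F$'' into a condition on the image of~$G_F$: namely, that $\rho := \rhobar_{E,p}$ maps $G_F$ into the stabilizer in~$B$ of some line other than the standard one. Such lines are spanned by~$(c,1)$ for $c \in \Fp$, with stabilizers the conjugate tori $B_c := \mat{1}{c}{0}{1} T \mat{1}{-c}{0}{1}$. The semidirect product decomposition immediately gives $[H : H \cap B_c] = p$ for every $c \in \Fp$, since for each $d \in D$ exactly one $u \in U$ satisfies $du \in B_c$. Hence the subfield $F_c \subset \Kbar$ fixed by $\rho^{-1}(H \cap B_c)$ is a degree-$p$ extension of~$K$ over which $E$ acquires a second isogeny, giving existence.

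For uniqueness, given any degree-$p$ extension $F/K$ over which $E$ has a second isogeny, I would use that $\rho(G_F) \subseteq H \cap B_c$ for some~$c$; since $B_c$ contains no order-$p$ element we have $H \not\subseteq B_c$, and combining $[H : \rho(G_F)] \mid [G_K : G_F] = p$ with $[H : H \cap B_c] = p$ forces $\rho(G_F) = H \cap B_c$ and hence $F = F_c$. The Galois-conjugacy claim follows from $\mat{1}{k}{0}{1} B_c \mat{1}{-k}{0}{1} = B_{c+k}$, so that $U \subseteq H$ permutes the $H \cap B_c$ transitively and therefore the~$F_c$ form a single $G_K$-orbit. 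The main obstacle I anticipate is the uniform index computation $[H : H \cap B_c] = p$, which has to cover the degenerate case where $D$ consists only of scalars (in which case all $H \cap B_c$ coincide, so there is literally only one field~$F_c$, still of degree~$p$, and the statement holds trivially).
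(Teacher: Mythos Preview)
Your argument is correct and follows essentially the same route as the paper: identify the second isogenies with the non-standard lines $(c,1)$, show the stabilizer of each inside the image has index~$p$ (you via the semidirect product $H=U\rtimes D$, the paper by noting that the powers of~$g=\mat{1}{1}{0}{1}$ give coset representatives), and deduce Galois conjugacy from the transitive conjugation action of~$U$ on these stabilizers. The paper's $F$ is your~$F_0$, and its cyclic-permutation-of-lines argument for uniqueness is exactly your observation $\mat{1}{k}{0}{1} B_c \mat{1}{-k}{0}{1}=B_{c+k}$; your treatment is slightly more systematic but not materially different.
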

\begin{proof} We can choose a basis of $E[p]$ such that
\[
\rhobar_{E,p} =  \begin{pmatrix}
                            \chi& h \\
                            0 & \chi'
                            \end{pmatrix},
\]
where $h\neq0$ since the image contains an element of order~$p$.  One
such element is then $g=\left(\begin{smallmatrix} 1 & 1 \\ 0 &
  1 \end{smallmatrix} \right)$.  Let $H$ be the set of elements
$\sigma \in G_K$ such that $h(\sigma) = 0$. Since $\rhobar_{E,p}$ is a
homomorphism it follows that $H$ is a subgroup of $G_K$, and it has
index~$p$ since the powers of~$g$ are coset representatives.  Let $F
\subset K(E[p])$ be the field fixed by $H$; then $[F : K] =p$.

For the uniqueness, note that precisely one of the $p+1$
one-dimensional subspaces of~$E[p]$ is fixed by~$\rhobar_{E,p}$, and
that~$g$ permutes the remaining subspaces cyclically.  It follows that
$E$ has exactly one $p$-isogeny defined over~$K$, and the remaining
$p$-isogenies are defined over the fixed fields of the conjugate
subgroups~$g^iHg^{-i}$, which are the extensions conjugate to~$F/K$.
\end{proof}

\begin{theorem} \label{T:reducible}
Let $p$ be a prime. Let $E_1, E_2$ be elliptic curves over~$K$ such
that
\begin{itemize}
 \item[(i)] $\rhobar_{E_1,p}^{ss} \simeq \rhobar_{E_2,p}^{ss} \simeq \chi \oplus \chi'$,  where $\chi, \chi' : G_K \to \Fpstar$ are characters;
 \item[(ii)] both $\rhobar_{E_1,p}$ and $\rhobar_{E_2,p}$ have an element of
 order~$p$ in their image.
\end{itemize}
For $i=1,2$, let $F_i/K$ be a degree~$p$ extension where $E_i$
acquires a second isogeny, as given by Proposition~\ref{P:fieldF}.

After replacing $E_2$ by a $p$-isogenous curve if necessary, we have
$\rhobar_{E_1,p} \simeq \rhobar_{E_2,p}$ if and only if $F_1 \simeq
F_2$ (as extensions of~$K$).
\end{theorem}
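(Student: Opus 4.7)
The strategy is to reduce both representations to strictly upper-triangular form with the same ordered diagonal and then exploit Proposition~\ref{P:inner}. First I would fix the labeling so that $\chi$ denotes the character on the unique $K$-rational line of $E_1[p]$, and pick bases of $E_i[p]$ with this line spanned by $e_1$, writing $\rho_i := \rhobar_{E_i,p}$ in the form $\rho_i(\sigma) = \mat{\chi(\sigma)}{h_i(\sigma)}{0}{\chi'(\sigma)}$. If the $K$-rational line of $E_2[p]$ has character $\chi'$ instead of $\chi$, I would first replace $E_2$ by its (unique) $K$-rational $p$-isogenous quotient; I invoke the fact that this replacement leaves $F_2$ unchanged. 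By hypothesis~(ii), each $h_i \not\equiv 0$, so each $\rho_i$ has a unique fixed line and its image $H_i := \rho_i(G_K)$ contains the unique Sylow-$p$ subgroup $U$ of $B$. Writing $\bar\rho := \pi \circ \rho_1 = \pi \circ \rho_2$ for the common diagonal part and $D' := \bar\rho(G_K)$, the correspondence between subgroups of $B$ containing $U$ and subgroups of $D$ gives $H_1 = H_2 = \pi^{-1}(D') =: H$.

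For the forward direction, suppose $A \rho_1 A^{-1} = \rho_2$ with $A \in \GL_2(\Fp)$. Since $A$ must map the unique $\rho_1$-fixed line to the unique $\rho_2$-fixed line (both $\langle e_1 \rangle$), we have $A \in B$. Because $|H| = p|D'|$ with $|D'|$ coprime to $p$, Schur--Zassenhaus ensures that all index-$p$ subgroups of $H$ are $U$-conjugate. Hence the subgroup $A D_1'' A^{-1} \subset H$ (where $D_1'' \subset H$ is the complement defining $G_{F_1}$) is $U$-conjugate to the corresponding $D_2'' \subset H$ defining $G_{F_2}$; writing this conjugation as $A D_1'' A^{-1} = u D_2'' u^{-1}$ with $u \in U \subset H = \rho_2(G_K)$ and picking $g \in G_K$ with $\rho_2(g) = u$ yields $G_{F_1} = g G_{F_2} g^{-1}$. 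Hence $F_1 \simeq F_2$ as extensions of $K$.

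For the reverse direction, assume $F_1 \simeq F_2$ and choose $g \in G_K$ with $G_{F_1} = g G_{F_2} g^{-1}$. Replacing $\rho_2$ by the isomorphic representation $\sigma \mapsto \rho_2(g^{-1} \sigma g)$, we arrange $G_{F_1} = G_{F_2} =: G_F$. For $\sigma \in G_F$ both cocycles $h_i(\sigma)$ vanish, so $\rho_1|_{G_F} = \bar\rho|_{G_F} = \rho_2|_{G_F}$. This forces $\ker \rho_1 = \ker \rho_2$: if $\rho_i(\sigma) = I$ then $h_i(\sigma) = 0$, so $\sigma \in G_F$ where the two representations coincide. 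Therefore the assignment $\phi(\rho_1(\sigma)) := \rho_2(\sigma)$ defines an automorphism of $H$ satisfying $\pi \circ \phi = \pi$, since both $\rho_i$ have diagonal part $\bar\rho$. Proposition~\ref{P:inner} now produces $A \in B$ with $\phi(x) = A x A^{-1}$ for all $x \in H$, yielding $A \rho_1 A^{-1} = \rho_2$.

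The main obstacle is the invariance claim used in the normalization: replacing $E_2$ by its $K$-rational $p$-isogenous quotient must leave $F_2$ unchanged. This reduces to showing that $p$-isogenous elliptic curves over $K$ have $\PGL_2(\Fp)$-conjugate projective mod-$p$ images, so that the subgroup of $G_K$ stabilizing a non-$K$-rational cyclic $p$-subgroup is invariant (up to conjugacy) under the isogeny. Once this is in place, the rest of the proof is essentially an unpacking of Proposition~\ref{P:inner}.
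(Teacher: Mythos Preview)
Your proof is correct and hews to the paper's strategy: both arguments reduce to Proposition~\ref{P:inner} after showing that $\rho_1$ and $\rho_2$ have the same image and the same kernel. Two points of comparison are worth noting. For the forward direction the paper gives a one-liner: the field $F_i$ depends only on the $G_K$-module structure of $E_i[p]$ (this is the content of the uniqueness clause in Proposition~\ref{P:fieldF}), so an isomorphism $E_1[p]\simeq E_2[p]$ forces $F_1\simeq F_2$ immediately. Your Schur--Zassenhaus argument reaches the same conclusion but is heavier than necessary. For the reverse direction, the paper establishes $\ker\rho_1=\ker\rho_2$ via the compositum identity $K(E_i[p])=LF_i$, where $L$ is the field cut out by $\chi\oplus\chi'$: since $[L:K]\mid(p-1)^2$ is coprime to $[F_i:K]=p$, one gets $K(E_1[p])=K(E_2[p])$ from $F_1\simeq F_2$. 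Your approach (conjugate $\rho_2$ so that the vanishing loci of $h_1$ and $h_2$ coincide, then read off $\rho_1|_{G_F}=\rho_2|_{G_F}$) is an equally clean alternative.

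The obstacle you flag at the end is a red herring. As the paper proves and applies the theorem (see \S3.6), the field $F_2$ is taken to be the one attached to the \emph{replaced} curve $E_2$; there is no need to establish that $F_2$ is invariant under passage to a $p$-isogenous curve, and indeed it is not clear that this invariance holds, since $K(E[p])$ and $K(E'[p])$ can differ for $p$-isogenous $E,E'$. Simply drop that invocation and work throughout with the replaced $E_2$, and your argument is complete.
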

\begin{proof}
  If $\rhobar_{E_1,p} \simeq \rhobar_{E_2,p}$, then $F_1 \simeq F_2$
  as extensions of~$K$, by the uniqueness part of
  Proposition~\ref{P:fieldF}. We now prove the opposite direction.

From (i) it follows that $\rhobar_{E_i,p}$ is reducible and that,
after replacing $E_2$ by a $p$-isogenous curve if necessary (to swap
$\chi$ with $\chi'$), we have, for $i=1,2$,
\[
\rhobar_{E_i,p} =  \begin{pmatrix}
                            \chi & h_i \\
                            0 & \chi'
                            \end{pmatrix} \quad \text{ with } \quad  h_i : G_K \to \Fp.  
\]
Let $L$ be the field cut out 
by $\chi \oplus \chi'$. 
It follows from (ii) that $h_i|_{G_L} \neq 0$, and
hence the matrix $\left(\begin{smallmatrix}
                            1 & 1 \\
                            0 & 1
                            \end{smallmatrix} \right)$  
is in the image of $\rhobar_{E_i,p}$ for $i=1,2$.
                            
Write $K_i = K(E_i[p])$. 
We have $[K_i : K] = [K_i : L][L : K] = p [L : K]$. 
Since the degree $[L : K]$ divides $(p-1)^2$ it is coprime 
to $p = [F_i : K]$ therefore we have $K_i = L F_i$.

Suppose $F_1 \simeq F_2$. Since $K_1$ is Galois, we have $F_2 \subset K_1$ and therefore $K_p := K_1 = K_2$ is the field cut out by both
$\rhobar_{E_1,p}$ and $\rhobar_{E_2,p}$, i.e. these representations have the same kernel.

Write $G = \Gal(K_p / K)$. From now on we think of $\rhobar_{E_i,p}$ as an injective representation of~$G$. Note that the images of $\rhobar_{E_1,p}$ and $\rhobar_{E_2,p}$
are the same subgroup~$H$ of the Borel.

All the elements in $H$ are of the form $\rhobar_{E_2,p}(\sigma)$ for $\sigma \in G$, so we can consider the map $\phi = \rhobar_{E_1,p} \circ \rhobar_{E_2,p}^{-1} : H \to H$. It is an automorphism of~$H = D\cdot U$ satisfying the hypothesis of Proposition~\ref{P:inner}, where $D$ are 
the matrices $\left(\begin{smallmatrix}
                            \chi & 0 \\
                            0 & \chi'
                            \end{smallmatrix} \right)$.  
Then, $\phi$ is given by conjugation, that is
\[
 \phi(\rhobar_{E_2,p}(\sigma)) = A \rhobar_{E_2,p}(\sigma) A^{-1}.
\]
Since we also have
\[ 
\phi(\rhobar_{E_2,p}(\sigma)) 
=  \rhobar_{E_1,p} \circ \rhobar_{E_2,p}^{-1}(\rhobar_{E_2,p}(\sigma)) = \rhobar_{E_1,p}(\sigma)
\]
we conclude that $\rhobar_{E_1,p}(\sigma) = A \rhobar_{E_2,p}(\sigma) A^{-1}$, as desired.
\end{proof}

\subsection{Distinguishing symplectic from antisymplectic: reducible
  case}

From Steps 1--3 we have (for certain pairs~$(E,E')$) established that
$E[7]^{\sss} \simeq E'[7]^{\sss}$ but this is insufficient to conclude
$E[7] \simeq E'[7]$ when these are reducible $G_\Q$-modules.  To
decide this we will apply Theorem~\ref{T:reducible} and its proof.

Recall that $E[7]$ is reducible if and only if $E$ admits a rational
$7$-isogeny.  Over $\Q$ there is only ever at most one $7$-isogeny,
since otherwise the image of the mod~$7$
representation~$\rhobar_{E,7}$ attached to~$E$ is contained in a split
Cartan subgroup of $\GL(2,\F_7)$, and this cannot occur over~$\Q$
(see~\cite[Theorem~1.1]{GL}).  Furthermore, it is well known that the
size of the $\Q$-isogeny class of~$E$ is either $2$, consisting of two
$7$-isogenous curves, or $4$, consisting of two pairs of $7$-isogenous
curves linked by $2$-{} or $3$-isogenies (but not both). Examples of
these are furnished by the isogeny classes \lmfdbeciso{26}{b},
\lmfdbeciso{49}{a}, and \lmfdbeciso{162}{b} respectively.

Fix an elliptic curve $E$ with $E[7]$ reducible. 
The image
of~$\rhobar_{E,7}$ has the form
\[
  \begin{pmatrix}\chi_1&*\\0&\chi_2  \end{pmatrix},
\]
where~$\chi_1, \; \chi_2 : G_{\Q}\to\F_7^*$ are characters and $*$
(the upper right entry) is non-zero by the previous
discussion. Moreover, the product $\chi_1\chi_2$ is the cyclotomic
character, so in particular $\chi_1\not=\chi_2$. This last observation
is valid over any field not containing $\sqrt{-7}$, so that the
determinant is not always a square.

Now let $E'$ be a second curve such that $E[7]^{\sss}\cong
E'[7]^{\sss}$. 
The image of $\rhobar_{E',7}$ has the form
\[
  \begin{pmatrix}\chi_1'&*'\\0&\chi_2'  \end{pmatrix},
  \]
where $\{\chi_1,\chi_2\}=\{\chi_1',\chi_2'\}$ and $*' \neq 0$ for the
same reason as before.  In particular, there is an element of
order~$7$ in the images of both $\rhobar_{E,7}$ and $\rhobar_{E',7}$.
The next step in applying Theorem~\ref{T:reducible} is to decide if we
need to replace $E'$ with its $7$-isogenous curve to obtain
$\chi_1=\chi_1'$ and $\chi_2=\chi_2'$. For this we determine the
``isogeny characters'' characters $\chi_1$ and $\chi_1'$: the kernel
of $\chi_1$ (respectively $\chi_1'$) cuts out the cyclic extension
of~$\Q$ of degree dividing~$6$ generated by the coordinates of a point
in the kernel of the unique $7$-isogeny from~$E$ (respectively~$E'$).
In this way we can determine whether $\chi_1=\chi_1'$ and
$\chi_2=\chi_2'$ or $\chi_1=\chi_2'$ and $\chi_2=\chi_1'$.  In the
second case, we replace $E'$ with its $7$-isogenous curve, which has
the effect of interchanging $\chi_1'$ and~$\chi_2'$ (as well as
changing $*'$).  Now the image of $\rhobar_{E',7}$ has the form
\[
\begin{pmatrix}\chi_1&*'\\0&\chi_2  \end{pmatrix},
\]
with the same characters, in the same order, as for
$\rhobar_{E,7}$. From Theorem~\ref{T:reducible} we have that
$E[7]\cong E'[7]$ if and only if $F_1 \simeq F_2$, where $F_i$ are the
fields in the statement of Theorem~\ref{T:reducible}.


The field $F_1$ is the common field of definition of all of the other
seven $7$-isogenies from~$E$ (see also Proposition~\ref{P:fieldF}).
The map from $X_0(7)$ to the $j$-line is given by the classical
rational function (see Fricke)
\[
   j = \frac{(t^{2} + 13t + 49) \cdot (t^{2} + 5t + 1)^{3}}{t},
\]
where $t$ is a choice of Hauptmodul for the genus~$0$ curve
$X_0(7)$. Hence the roots of the degree~$8$ polynomial~$(t^{2} + 13t +
49) \cdot (t^{2} + 5t + 1)^{3} -t\cdot j(E)$ determine the fields of
definition of the eight $7$-isogenies from~$E$. In our setting, it has
a single rational root (giving the unique $7$-isogeny from~$E$ defined
over~$\Q$) and an irreducible factor of degree~$7$, which
defines~$F_1$ as an extension of~$\Q$. Similarly, starting from $E'$
we determine $F_2$; finally, we check whether $F_1$ and $F_2$ are
isomorphic.


In this way, for each pair $(E,E')$ whose 
mod~$7$ representations
are reducible with isomorphic semisimplifications, we may determine
whether or not we do in fact have an isomorphism $E[7]\cong E'[7]$,
possibly after replacing $E'$ by its unique $7$-isogenous curve.

In most of the reducible cases encountered in the database, we found
that there was no isomorphism between the $7$-torsion modules
themselves.  In those cases where there is such an isomorphism, we can
determine whether or not it is symplectic using the same methods as in
the irreducible case, noting that the test using the parametrizing
curves $X_E(7)$ and $X_E^-(7)$ do not at any point rely on the
irreducibility or otherwise of the representations.  Finally, if there
are also $2$-{} or $3$-isogenies present we can include these
appropriately, since the former induce symplectic and the latter
antisymplectic congruences.

{\bf Example (continued).} For $p=7$, after Steps 1--3, there
are~$287$ reducible sets of isogeny classes with isomorphic
semisimplification, of size up to~$80$.  Step~5 refines these into
smaller subsets which have actually isomorphic $7$-torsion modules, of
which $384$ are nontrivial.  Among these there are $38$ classes also
admitting a $2$-isogeny and $22$ classes admitting a $3$-isogeny,
making a total of $849$ curves, partitioned into mutually
$7$-congruent subsets of size $2$, $3$ or~$4$: there are $263$ sets of
size~$2$, of which the congruence is symplectic in $142$ cases and
anti-symplectic in $121$ cases; $101$ of size~$3$, of which all the
congruences are symplectic in $56$ cases, and in the remaining $45$
cases, only one congruence is symplectic; and~$20$ of size~$4$, in
which all congruences are symplectic in $8$ cases, there are two pairs
of symplectically congruent curves (with congruences between curves in
different pairs being anti-symplectic) in $2$ cases, and in $10$ cases
there are $3$ curves mutually symplectically congruent and
anti-symplectically congruent to the fourth curve.

For $p\ge11$ there are no reducible cases to consider.

\subsection{Twists}
If there is a mod~$p$ congruence between two elliptic curves~$E_1$
and~$E_2$, then for any $d\in\Q^*$ there will also be a congruence
(with the same symplectic type) between their quadratic twists
$E_1^d$ and~$E_2^d$ (see \cite[Lemma~11]{FKSym}).
Nevertheless, it is hard to say precisely how many congruences there
in the database ``up to twist'', since twisting changes conductor (in
general), so we may have a set of mutually $7$-congruent elliptic
curves in the database, but with one or more of their twists not in
the database, so the twisted set in our data will be smaller.

Instead, to have a measure of how many congruences we have found up to
twist, we simply report on how many distinct $j$-invariants we found,
excluding as before curves which are only congruent to isogenous
curves.  For $p=7$ there are $\numprint{11761}$ distinct
$j$-invariants of curves with irreducible mod~$7$ representations
which are congruent to at least one non-isogenous curve, and $154$
distinct $j$-invariants in the reducible case.

For $p=11$ there are $212$ distinct $j$-invariants and for $p=13$
there are $39$.  For $p=17$, all $17$-congruent isogeny classes
consist of single curves, the eight pairs are quadratic twists, and
the $j$-invariants of the curves in each pair are
$48412981936758748562855/77853743274432041397$ and $-46585/243$.  One
such pair of $17$-congruent curves consists of $\lmfdbec{47775}{b}{1}$
and~$\lmfdbec{3675}{b}{1}$.

\section{Evidence for the Frey-Mazur conjecture}
\label{S:Frey-Mazur}

Theorem~\ref{T:cong19} states that the strong form of the
Frey--Mazur conjecture with~$C=17$ holds for the congruences available
in the LMFDB database. We refer to~\cite{Halberstadt-Kraus-FreyMazur}
for one theoretical result towards this very challenging and still
open conjecture.

\begin{proof}[Proof of Theorem~\ref{T:cong19}]
We must prove that if $p\ge19$ then the only $p$-congruences between
elliptic curves of conductor at most $\numprint{500000}$ are those
induced by isogenies.

Let~$p \geq 5$ be a prime. Let $N_E$ and $\Delta_E$ denote the
conductor and the minimal discriminant of $E$, respectively. Write
also $\tilde{N}_E$ to denote~$N_E$ away from~$p$ and let $N_p$ be the
Serre level (i.e. the Artin conductor away from~$p$)
of~$\rhobar_{E,p}$.  We have $N_p \mid \tilde{N}_E$.

Recall that the conductor of an elliptic curve at primes~$p \geq 5$ divides~$p^2$. Moreover, from Kraus~\cite[p. 30]{KrausThesis}
it follows that, for each $\ell \neq p$, if 
$\vv_{\ell}(N_p) \neq \vv_{\ell}(N_E) = \vv_{\ell}(\tilde{N}_E)$ then
$\vv_{\ell}(N_E) = 1$ and $p \mid \vv_{\ell}(\Delta_E)$.
Therefore, we can find primes $q_i \nmid pN_p$ such that 
\begin{equation}\label{E:condE}
  N_E = p^s \cdot N_p \cdot q_0 \cdot \ldots \cdot q_n, 
 \qquad p \mid \vv_{q_i}(\Delta_E), \qquad 0 \leq s \leq 2
\end{equation}
where the number of~$q_i$ occurring is $\geq 1$ if and only if 
$\tilde{N}_E \neq N_p$. 

Now let $E'/\Q$ be another elliptic curve satisfying $E[p] \simeq E'[p]$ as $G_\Q$-modules. Write $N_{E'}$, $\tilde{N}_{E'}$, $\Delta_{E'}$, $N'_p$ and $\rhobar_{E',p}$ to denote analogous quantities attached to~$E'$. We have $N'_p \mid \tilde{N}_{E'}$.

By assumption, we have $\rhobar_{E',p} \simeq \rhobar_{E,p}$ so these representations have the same Serre level, i.e. $N_p = N_p'$ and (similarly as for~$E$) we can find primes~$q_i' \nmid pN'_p$ such that~$N_{E'}$ factors as
\begin{equation}\label{E:condE'}
 N_{E'} = p^{s'} \cdot N_p \cdot q'_0 \cdot \ldots \cdot q'_m, 
 \qquad p \mid \vv_{q'_i}(\Delta_E'),
 \qquad 0 \leq s' \leq 2.
\end{equation}
The representations $\rhobar_{E,p}$ and $\rhobar_{E',p}$ also have the same Serre weights $k$ and $k'$, respectively. Note that for $s=0$ ($E$ has good reduction at~$p$) we have $k=2$ and for $s=1$ ($E$ has multiplicative reduction at~$p$) we have $k=2$ if $p \mid \vv_p(\Delta_E)$ or $k=p+1$ otherwise (see for example \cite[p. 3]{KrausThesis}); moreover, for $s=2$ it follows from \cite[Th\'eor\`eme 1]{KrausThesis} that $k \not\in \{2, p+1\}$ 
for $p \geq 19$. Similar conclusions apply to $E'$, $s'$ and $k'$. 
Therefore, we have 2 cases: (i) if $s = 2$ or $s=1$ and $p \nmid \vv_{p}(\Delta_E)$ then $s'=s$; (ii) if $s=0$ or $s=1$ and $p \mid \vv_{p}(\Delta_E)$ then $s' \in \{0,1 \}$.

Suppose $E'$ is a non-isogenous curve with the same conductor. Taking differences of traces of Frobenius at different primes shows that 
there are no congruence between any two of them for $p \geq 19$, otherwise $p$ needs to divide the differences (see (1) below). 
Thus $N_E \neq N_{E'}$.

Suppose $\tilde{N}_E = \tilde{N}_{E'}$, so that the only difference in the conductors is at~$p$. From the possibilities above for the Serre weights, after interchanging $E$ and $E'$ if needed, 
we can assume $s=1$ and $s'=0$ and we also know that $p \mid \vv_p(\Delta_E)$. On the other hand, if $\tilde{N}_E \neq \tilde{N}_{E'}$ then, after interchanging $E$ and $E'$ if needed, we have $N_p \neq \tilde{N}_E$ and so there is at least one prime~$q_i \neq p$ appearing in the factorization~\eqref{E:condE}, which in particular satisfies $p \mid \vv_{q_i}(\Delta_E)$.

Let $\calM_E$ be the set of pairs $(q,p)$ where $q$ is a multiplicative prime of~$E$ and
$p \geq 19$ is a prime satisfying $p \mid \vv_{q}(\Delta_E)$.
Note that we can have $q=p$. Let $\calM_{E'}$ be the analogous set for~$E'$. From the previous paragraph we conclude that 
$p$ has to occur in the second entry of one of the pairs~$(q,p)$ in~$\calM_E$ or $\calM_{E'}$.

To complete the proof, we carried out the following computations on
the LMFDB database of all elliptic curves defined over~$\Q$ and
conductor at most~$\numprint{500000}$:
\begin{enumerate}
\item For each $N\le \numprint{500000}$ and each pair of non-isogenous
  curves~$E_1,E_2$ of conductor~$N$ (if there are at least two such
  isogeny classes), we computed $\gcd_{\ell\le B,
    \ell\nmid N}(a_{\ell}(E_1)-a_{\ell}(E_2))$ for increasing~$B$
  until the value of the $\gcd$ was~${}\le17$.  The success of this
  computation shows that there are no congruences mod~$p$ between
  non-isogenous curves of the same conductor for $p\ge19$.
\item For one curve~$E$ in each isogeny class we computed the set
  $\calM_E$ from the conductor and minimal discriminant.  We found that
  the largest prime~$p$ occurring in any~$\calM_E$ was~$97$: in fact,
  all~$p$ with $19\le p\le97$ occur except for~$p=89$.  Hence any
  mod~$p$ congruence between non-isogenous curves in the database must
  have~$p\le97$.  In view of the computations of
  Section~\ref{S:statistics}, there are no such congruences for $19\le
  p\le97$.

  Note that the set~$\calM_E$ is unchanged if we replace $E$ by a curve
  isogenous to it, provided that the isogeny has degree divisible only
  by primes less than~$19$.  But the only curves defined over~$\Q$ with
  isogenies of prime degree~$p\ge19$ are the CM curves for
  $p=19,43,67,163$, which have no multiplicative primes, and the pairs
  of $37$-isogenous curves, which have the same property (the smallest
  conductor being $1225=5^2\cdot7^2$).  Hence in this step it suffices
  to consider just one curve in each isogeny class.
\end{enumerate}
\end{proof}



\end{document}